\newcommand{\mylabel}[2]{#2\def\@currentlabel{#2}\label{#1}}
\newsavebox{\@brx}
\newcommand{\llangle}[1][]{\savebox{\@brx}{\(\m@th{#1\langle}\)}%
	\mathopen{\copy\@brx\kern-0.5\wd\@brx\usebox{\@brx}}}
\newcommand{\rrangle}[1][]{\savebox{\@brx}{\(\m@th{#1\rangle}\)}%
	\mathclose{\copy\@brx\kern-0.5\wd\@brx\usebox{\@brx}}}
\newcommand\xqed[1]{%
	\leavevmode\unskip\penalty9999 \hbox{}\nobreak\hfill
	\quad\hbox{#1}}
\newcommand\xxqed{\xqed{$\triangle$}}
\def\e#1\e{\begin{equation}#1\end{equation}}
\def\ea#1\ea{\begin{align}#1\end{align}}
\theoremstyle{plain}
\newtheorem{thm}{Theorem}[section]
\newtheorem{thmx}{Theorem}
\newtheorem{lem}[thm]{Lemma}
\newtheorem{prop}[thm]{Proposition}
\newtheorem{assumption}[thm]{Assumption}
\newtheorem{cor}[thm]{Corollary}
\theoremstyle{definition}
\newtheorem{dfn}[thm]{Definition}
\newtheorem{ex}[thm]{Example}
\newtheorem{rem}[thm]{Remark}
\newtheorem{cons}[thm]{Construction}
\newcommand{\op}{\operatorname}
\newcommand{\im}{\op{im}}
\newcommand{\pii}{2\pi \mathbf{i}}
\def\Xint#1{\mathchoice
	{\XXint\displaystyle\textstyle{#1}}%
	{\XXint\textstyle\scriptstyle{#1}}%
	{\XXint\scriptstyle\scriptscriptstyle{#1}}%
	{\XXint\scriptscriptstyle\scriptscriptstyle{#1}}%
	\!\int}
\def\XXint#1#2#3{{\setbox0=\hbox{$#1{#2#3}{\int}$}
		\vcenter{\hbox{$#2#3$}}\kern-.5\wd0}}
\def\dashint{\Xint-}
\numberwithin{equation}{section}
\newcommand{\subjclass}[2][2020]{%
	\let\@oldtitle\@title%
	\gdef\@title{\@oldtitle\footnotetext{#1 \emph{Mathematics Subject Classification.} #2}}%
}
\newcommand{\keywords}[1]{%
	\let\@@oldtitle\@title%
	\gdef\@title{\@@oldtitle\footnotetext{\emph{Key words and phrases.} #1.}}%
}
\let\orig@afterheading\@afterheading
\def\@afterheading{%
	\@afterindenttrue
	\orig@afterheading}
\begin{document}
	\title{\bf 
			Regularized Integrals on Configuration Spaces of Riemann Surfaces and Cohomological Pairings
	}
	\author{Jie Zhou}
	\date{}
\subjclass{14F40, 30E20, 30F30, 81Q30, 81T40}
	\maketitle
		\begin{abstract}
We extend the notion of regularized integrals introduced by Li-Zhou that aims to assign finite values to divergent integrals on configuration spaces of Riemann surfaces. 
We then give cohomological formulations for the extended notion using the tools of current cohomology and mixed Hodge structures.
We also provide practical ways of constructing representatives of the corresponding cohomology classes in terms of smooth differential forms.
	\end{abstract}

	\setcounter{tocdepth}{2} \tableofcontents

	\section{Introduction}
	
	Regularizing divergent integrals is a key step in the mathematical understanding of quantum field theories via their correlation functions.
	Specializing to 2d chiral conformal field theories (CFTs) with Lagrangian descriptions, it provides an alternative approach to formulate the rich aspects of the physics theories, besides the one provided by vertex algebras. The tools and theories introduced in the procedure are often useful on their own from the mathematical point of view. \\
		
The divergent integrals that we are interested in throughout  this work take the form 
\begin{equation}\label{eqnsettingofdivergentintegrals}
	\int_{X}\omega~ \,,
\end{equation}
 where 
\begin{itemize}
	\item
	$X$ is a smooth compact complex manifold;
	\item
$D$ is a reduced effective divisor on $X$ with  smooth irreducible components;
\item  $\omega$ is a differential form on $X$, smooth everywhere except for possibly holomorphic poles along $D$.
\end{itemize}
What attracts us most  are the following situations.
\begin{enumerate}
	\item[\mylabel{caseC}{\textbf{(R)}}]
	Let $C$ be a closed Riemann surface.
	For $n\geq 2$, let $X=C^{n}$ and $D$ be a subdivisor of the big diagonal $\Delta=\bigcup_{i\neq j}\Delta_{ij}$, where 
	\[
	\Delta_{ij}=\Big\{(p_1,\cdots, p_{n})\in C^{n}|~p_{i}=p_{j}\Big\}\,.
	\]
	Then $\mathrm{Conf}_{n}(C)=X-\Delta$ is the configuration space of $n$ points on $C$.
	For $n=1$, let $X$ be a Riemann surface $C$ and $D$ consist of a finite set of points.
	
		\item[\mylabel{caseHA}{\textbf{(HA)}}] 
	Let $X$ be a nonsingular projective algebraic variety and $D$ be a smooth hypersurface arrangement. Here a smooth hypersurface arrangement in $X$ of dimension $n$ is a union of smooth hypersurfaces  
	$D=\bigcup_{a=1}^{N}D_{a}$ that locally looks like a union of hyperplanes in $\mathbb{C}^{n}$, such that 
	for any ordered tuple $I=(a_1<a_2<\cdots< a_{m})$, $D_{I}:=\bigcap_{a\in I}D_{a}$ is connected and smooth.

	A special case is when
	$D$ is a simple normal crossing divisor which we shall refer to as the $(\textbf{NCD})$ case. An even more special case is when $D$ is a smooth hypersurface which we refer to as the $(\textbf{H})$ case.
	
\end{enumerate}
In each case, let $j: U=X- D\rightarrow X$ be the open embedding.\\

An analytic approach
in regularizing such divergent integrals in case \ref{caseC} is introduced in \cite{Li:2020regularized} and further developed in \cite{Li:2022regularized}.
See the references above and \cite{Zhou:GW} for motivation and applications of this approach of regularization.

Let us  briefly review this analytic approach. We first introduce the following definitions.
\begin{dfn}
\begin{itemize}
	\item Let $\Omega_{X}^{p}$ be the sheaf of holomorphic $p$ forms on $X$ and 
	$H^{0}(X,\Omega_{X}^{p})$ be the corresponding space of global sections over $X$.
	\item Let $A^{p,q}_{X}$ (resp. $A^{p,q}_{X}(\star D)$) be the space of $(p,q)$ forms on $X$ (resp. the space of $(p,q)$ forms that are 
	smooth on $X$ except for holomorphic poles along $D$).
	Let $A^{n}_{X}=\bigoplus_{p+q=n}A^{p,q}_{X},A^{n}_{X}(\star D)=\bigoplus_{p+q=n}A^{p,q}_{X}(\star D)$.
	\item 
	Let
	also $A^{p,q}_{X}(\log  D)$ be the space of $(p,q)$ forms that are 
	smooth on $X$ except for logarithmic poles\footnote{Note that the notion of logarithmic forms we use here 
		is  different from the one introduced by Saito \cite{Saito:1980} 	and coincides with it in the normal crossing divisor case.
	} along $D$; locally
	it is generated by the usual smooth forms and ${ds_a/ s_a}$,
	where the $s_a$'s are the local defining equations of the irreducible components of $D$.
	Let $A^{n}_{X}(\log D)=\bigoplus_{p+q=n}A^{p,q}_{X}(\log D)$.
	\item
		An element 	$\omega\in A^{n,n}_{X}(\star D )$ is called 
	\emph{factorizable} if it lies in the image of the following map:
	\begin{equation}\label{eqndfnfactorizableforms}
	\wedge:\quad 
	(A^{n,0}_{X}(\star D )\cap \mathrm{ker}\,\bar{\partial})
	\times
	(A^{0,n}_{X}\cap \mathrm{ker}\,\partial)
	\rightarrow 
	A^{n,n}_{X}(\star D )\,.			
	\end{equation}
	Denote  the subspace of factorizable forms by $\mathfrak{F}\subseteq A^{n,n}_{X}(\star D )$.	
	
\end{itemize}
\end{dfn}

The regularized  integral $\dashint_{X} \omega$  for case \ref{caseC} is then defined for 
$\omega\in A^{n,n}_{X}(\star D )$ as follows.
By reduction of pole, one can find a (non-unique) decomposition
(see Proposition \ref{prophomotopy} (i))
\begin{equation}\label{eqnlogdecomposition}
	\omega=\alpha+\partial \beta\,,\quad \quad \quad
	\alpha\in A^{n,n}_{X}(\log D )\,,~
	\beta\in A^{n-1,n}_{X}(\star D )\,.
\end{equation}
A local analysis shows that $\alpha$ is absolutely integrable on $X$ (see Lemma \ref{lemintegrabilityoflogpart} for a stronger version).
Then one introduces the following definition.
\begin{dfn}[\cite{Li:2020regularized}]\label{dfnregularizedintegral}
The regularized integral $	\dashint_{X}: A^{n,n}_{X}(\star D )\rightarrow \mathbb{C}$ for case \ref{caseC} is defined as follows
\begin{equation}\label{eqndfnregularizedintegral}
	\dashint_{X}\partial\beta:=0\,,\quad 
	\dashint_{X} \omega:= \int_{X}\alpha\,.
\end{equation}
\end{dfn}
By the absolute integrability of $\alpha$ and Stokes theorem, it is  shown in \cite{Li:2020regularized} that the regularized integral is independent of the choice of the decomposition \eqref{eqnlogdecomposition} above (see Lemma \ref{lemindependenceondecompositionaseX} for a different and more direct proof).	
The operator $\dashint_{X}$ shares many properties with the ordinary integral, such as Stokes theorem, Fubini-type theorem, works
for family versions (e.g., it respects modularity under the action by $\mathrm{SL}_{2}(\mathbb{Z})$ when $C$ has genus one\footnote{
As explained in \cite{Li:2022regularized}, this property, together with 
the relation between regularized integrals and iterated $A$-cycle integrals, suggests that
$\dashint_{X}$ is \emph{the} regularization expected from physics.}) $\cdots$, among many other nice properties. 
For the $n=1$ case it also reduces to the Cauchy principal value.
It provides a quite satisfying regularization scheme and explain many interesting structures \cite{Gui-Li2021, Li:2020regularized, Li:2022regularized} such as holomorphic anomaly.
See Section \ref{secreformulationextension} below for a quick review of some key properties of this operator.\\

The present work is a continuation and extension of the joint works \cite{Li:2020regularized, Li:2022regularized}. 	In this work we 
provide  cohomological formulations for the operator $\dashint_{X}$ in case \ref{caseC} 
that give more conceptual explanations of its properties such as modularity.
We in fact prove the following stronger results for extensions of the operator $\dashint_{X}$ beyond case \ref{caseC}. 

	Before preceeding, we introduced the following notion that extends the one in \eqref{eqndfnregularizedintegral}.
	
	\begin{dfn}\label{dfnpropertyP}
			Let the setting be as in \eqref{eqnsettingofdivergentintegrals}.
		A linear  operator $I:  A^{n,n}_{X}(\log D)+\partial A^{n-1,n}_{X}(\star D)\rightarrow \mathbb{C} $
		is called an admissible regularized integration operator if 
				\begin{equation}\label{eqnreasonableregularizationintro}
		I(\alpha)=\int_{X}\alpha ~ ~\textrm{for}~~ \alpha\in A^{n,n}_{X}(\log D)\,,\quad 
			I(\partial\beta)=0~~ \textrm{for}~~\beta\in A^{n-1,n}_{X}(\star D)\,.
		\end{equation}
		\end{dfn}

	The main results of this work are summarized as follows.
	
	\begin{thmx}
		\label{thmgeneralizedregularizationmapAintro}
		Let the setting be as in \eqref{eqnsettingofdivergentintegrals}.
							Let $\mathfrak{r}: A^{n,n}_{X}(\log D)+\partial A^{n-1,n}_{X}(\star D)\rightarrow \mathbb{C} $
			be  an admissible regularized integration operator.	
								Then there exists a linear map
			\begin{eqnarray}\label{eqnDolmap}
			_{\mathrm{Dol}}:
			 A^{n,n}_{X}(\log D)+\partial A^{n-1,n}_{X}(\star D)& \rightarrow & A^{n,n}_{X}\,,\\
			 \omega &\mapsto & \omega_{\mathrm{Dol}}\,,\nonumber
			\end{eqnarray}
			such that for any $\omega\in A^{n,n}_{X}(\log D)+\partial A^{n-1,n}_{X}(\star D)$ one has
			\begin{equation}
				\mathfrak{r}(\omega)=\int_{X}\omega_{\mathrm{Dol}}=\langle [X], [\omega_{\mathrm{Dol}}]
				\rangle \,.
			\end{equation}
				Here $[X]$ is the fundamental class of $X$, and $[\omega_{\mathrm{Dol}}]$
		is a cohomology class in $H^{n}(A_{X}^{\bullet,n},\partial)\cong H^{2n}(X)$ of the smooth form $\omega_{\mathrm{Dol}}$.		
		Furthermore, the  form $\omega_{\mathrm{Dol}}$ in \eqref{eqnDolmap} can be concretely constructed from $\omega$
		using the conjugate Dolbeault-Cech double complex.			
	\end{thmx}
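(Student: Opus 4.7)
The plan is to reduce the statement to pure log forms, then use the conjugate Dolbeault--Čech double complex to smooth out the logarithmic singularities at the cost of a $\partial$-exact error that $\mathfrak r$ does not see.

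\emph{Step 1: reduction to log forms.} By the reduction-of-pole result recalled in Proposition~\ref{prophomotopy}(i), any $\omega \in A^{n,n}_X(\log D)+\partial A^{n-1,n}_X(\star D)$ admits a decomposition $\omega = \alpha + \partial\beta$ with $\alpha \in A^{n,n}_X(\log D)$ and $\beta \in A^{n-1,n}_X(\star D)$. Admissibility of $\mathfrak r$, together with Lemma~\ref{lemindependenceondecompositionaseX}, then gives $\mathfrak r(\omega) = \int_X\alpha$, where the right-hand side is a well-defined complex number thanks to Lemma~\ref{lemintegrabilityoflogpart}. It therefore suffices to exhibit, linearly in $\alpha$, a smooth form $\omega_{\mathrm{Dol}}\in A^{n,n}_X$ together with some $\tilde\eta\in A^{n-1,n}_X(\star D)$ satisfying
\[
\alpha \;=\; \omega_{\mathrm{Dol}} + \partial\tilde\eta.
\]
Indeed, setting $\eta:=\beta+\tilde\eta\in A^{n-1,n}_X(\star D)$ yields $\omega = \omega_{\mathrm{Dol}} + \partial\eta$, and applying admissibility twice gives $\mathfrak r(\omega) = \mathfrak r(\omega_{\mathrm{Dol}})+\mathfrak r(\partial\eta) = \int_X\omega_{\mathrm{Dol}}$. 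Since $\omega_{\mathrm{Dol}}$ is a smooth $(n,n)$-form, it is $\partial$-closed for trivial degree reasons, so it represents a class in $H^n(A^{\bullet,n}_X,\partial) \cong H^{2n}(X,\mathbb C)$, and $\int_X \omega_{\mathrm{Dol}}=\langle[X],[\omega_{\mathrm{Dol}}]\rangle$ by de Rham duality.

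\emph{Step 2: the double complex and the zigzag.} Fix an open cover $\mathcal U=\{U_i\}$ of $X$ adapted to $D$, so that each $U_i$ is a polydisc in which $D\cap U_i$, when nonempty, is a union of coordinate hyperplanes (case \ref{caseC} and the NCD subcase of \ref{caseHA}), or more generally a smooth hypersurface arrangement (general \ref{caseHA}). Consider the Čech--conjugate-Dolbeault bicomplexes
\[
K^{p,q}_{\bullet} \;:=\; \check C^{p}\bigl(\mathcal U,\;A^{q,n}_X(\bullet)\bigr),\qquad \bullet \in \{\emptyset,\log D,\star D\},
\]
with horizontal Čech differential $\delta$ and vertical differential $\partial$. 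The construction of $\omega_{\mathrm{Dol}}$ and $\tilde\eta$ is the standard zigzag inside $K^{\bullet,\bullet}_{\star D}$: start with the $0$-cochain $(\alpha|_{U_i})_i$, which is $\partial$-closed by degree; apply a local conjugate $\partial$-Poincaré lemma on each $U_i$ to find $\gamma^{(0)}_i\in A^{n-1,n}_X(\star D)(U_i)$ with $\partial\gamma^{(0)}_i=\alpha|_{U_i}$; form the Čech coboundary, lift again by $\partial$, and iterate $n$ times. After $n$ iterations the entries lie in $A^{0,n}_X$, on which the star is vacuous since the defining equations of $D$ are holomorphic. Running the corresponding descent back using partitions of unity subordinate to $\mathcal U$ (available because $A^{q,n}_X(\star D)$ is a soft $C^\infty_X$-module) produces a global smooth form $\omega_{\mathrm{Dol}}\in A^{n,n}_X$ together with a global $\tilde\eta \in A^{n-1,n}_X(\star D)$ satisfying $\alpha=\omega_{\mathrm{Dol}} + \partial\tilde\eta$. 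Fixing the cover, the local primitives, and the partitions of unity once and for all makes $\omega \mapsto \omega_{\mathrm{Dol}}$ a linear assignment.

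\emph{Main obstacle.} The essential technical ingredient is the conjugate $\partial$-Poincaré lemma for the star complex $(A^{\bullet,n}_X(\star D)(U_i),\partial)$ on the local models occurring in \ref{caseC} and \ref{caseHA}: one must solve $\partial\eta=\alpha$ with $\eta$ still of star type along $D\cap U_i$, and furthermore this star structure must behave predictably under the subsequent Čech differences so that successive rungs of the zigzag remain inside $A^{\bullet,n}_X(\star D)$ and ultimately land in the smooth subcomplex. For a single smooth hypersurface and for normal crossings this follows by adapting the classical Grothendieck--Dolbeault argument to coefficients with holomorphic poles, but for a general smooth hypersurface arrangement as in \ref{caseHA} the local primitives have to be chosen carefully so that the pole behavior is preserved under each Čech difference. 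Verifying this local solvability in the various geometric settings, together with the compatibility that guarantees termination of the zigzag in the smooth subcomplex, is where the bulk of the technical work sits.
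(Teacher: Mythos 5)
Your Step~1 is where the argument breaks, and the failure is not the technical loose end you flag at the close but a genuine obstruction. You need to write a logarithmic form $\alpha\in A^{n,n}_{X}(\log D)$ as $\alpha=\omega_{\mathrm{Dol}}+\partial\tilde\eta$ with $\omega_{\mathrm{Dol}}$ smooth and, crucially, $\tilde\eta\in A^{n-1,n}_{X}(\star D)$, so that admissibility of $\mathfrak r$ kills the error term. Such a decomposition does not exist in general: the residue of $\alpha$ along $D$ obstructs it, and the obstruction is local. For $n=1$, if $f=h/z^{N}$ with $h$ smooth (the local shape of a function with holomorphic poles), then $z^{N+1}\bigl(\tfrac1z-\partial_{z}f\bigr)=z^{N}-z\partial_{z}h+Nh$, whose Taylor coefficient of $z^{N}$ equals $1$; since any element of $z^{N+1}C^{\infty}$ has vanishing $z^{N}$-coefficient, $\tfrac1z-\partial_{z}f$ is never smooth. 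Hence $\frac{dz}{z}\wedge d\bar z$ admits no $\partial$-primitive in $A^{0,1}(\star\, 0)$ even on a disc, which simultaneously falsifies the ``conjugate $\partial$-Poincar\'e lemma for the star complex'' that your Step~2 zigzag relies on: the complex $(\mathcal{A}^{\bullet,n}_{X}(\star D),\partial)$ is not a resolution, so the zigzag cannot even start. The paper records exactly this phenomenon: in Example \ref{exwpn=1definitioncomputation} the $\partial$-primitive of $\widehat{Z}\,dz\wedge\frac{d\bar z}{\bar\tau-\tau}$ is $\ln(\widehat{\theta}\bar{\theta})\frac{d\bar z}{\bar\tau-\tau}$, which lies \emph{outside} $A^{0,1}_{X}(\star D)$ (see also Remark \ref{improvedalgorithm}), and the introduction stresses that Theorem \ref{thmgeneralizedregularizationmapAintro} works with the coboundary space $\partial D_{X}^{'n-1,n}$, which strictly contains $\partial A^{n-1,n}_{X}(\star D)$.

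This is precisely why the paper routes the proof through currents instead of forms. The operator $\mathfrak r$ defines a $\partial$-closed current $T_{\omega}(f)=\mathfrak r(\omega f)$ (Proposition \ref{propformdefiningcurrentcaseD}); the trace $T\mapsto T(1)$ is well defined on $H^{n}(D_{X}^{'\bullet,n},\partial)$ because a $\partial$-exact \emph{current} pairs to zero with the constant function $1$ (Definition \ref{dfnontracemaponcurrentcohomology}) --- so no admissibility of $\mathfrak r$ on the error term is ever invoked, which is what rescues the argument; and since both $(\mathcal{D}_{X}^{\bullet,n},\partial)$ and $(\mathcal{D}_{X}^{'\bullet,n},\partial)$ resolve $\overline{\Omega}^{n}_{X}$, the class $[T_{\omega}]$ has a smooth representative $\omega_{\mathrm{Dol}}$ with $\mathfrak r(\omega)=\mathrm{Tr}([T_{\omega}])=\int_{X}\omega_{\mathrm{Dol}}$ (Theorem \ref{thmresidueformulaforcurrents}). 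Your \v{C}ech zigzag survives only in this corrected form: one runs it in the \emph{current} conjugate Dolbeault--\v{C}ech double complex, where local $\partial$-primitives always exist (they acquire $\log$-type singularities, not holomorphic poles), descends to a \v{C}ech $n$-cocycle valued in $\overline{\Omega}^{n}_{X}$, and only then chases back through the smooth-form double complex to produce $\omega_{\mathrm{Dol}}$ (Proposition \ref{propconstuctingclassusingDolCech}).
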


		\begin{thmx}
		\label{thmgeneralizedregularizationmapBintro}
		Let the setting be as in the $\ref{caseHA}$ case in \eqref{eqnsettingofdivergentintegrals}.
			Let $\mathfrak{s}: \mathfrak{F}+\partial A^{n-1,n}_{X}(\star D)\rightarrow \mathbb{C} $
			be a linear map satisfying \eqref{eqnreasonableregularizationintro}, with  $I$ replaced by $\mathfrak{s}|_{\mathfrak{F}+\partial A^{n-1,n}_{X}(\star D)}\,$.				
			Then there exists a linear map
			\begin{eqnarray}\label{eqnHodgemap}
				[~~]_{\mathrm{Hodge}}:
				A^{n,0}_{X}(\star D )\cap \mathrm{ker}\,\bar{\partial}& \rightarrow & 	H^{0}(X,\Omega_{X}^{n})\,,\\
				\phi &\mapsto &	[\phi]_{\mathrm{Hodge}}\,,\nonumber
			\end{eqnarray}
			such that for any factorizable form $\omega=\phi\wedge \psi\in\mathfrak{F}$ 
			one has 
			\begin{equation}
				\mathfrak{s}(\phi\wedge \psi)=\mathrm{Tr}\left( 	[\phi]_{\mathrm{Hodge}}\wedge \psi\right)\,.
			\end{equation}
			Here $\mathrm{Tr}$ is the trace map defined by
			\[
			\mathrm{Tr}: 
			H^{0}(X,\Omega_{X}^{n})\otimes H^{0}(X,\overline{\Omega}_{X}^{n})
			\xrightarrow{\wedge } H^{0}(X,\mathcal{A}_{X}^{n,n})\xrightarrow{\int_{X}}\mathbb{C}\,.
			\]
						Furthermore, the class $	[\phi]_{\mathrm{Hodge}}$ can be  constructed from $\phi$
			on the level of smooth differential forms from the Deligne splitting of the mixed Hodge structure on $H^{n}(U)$.

	\end{thmx}

	The subcases $\ref{caseC}, (\textbf{NCD})$ of $\ref{caseHA}$ play a special role:
	one has as  vector spaces  (see Proposition \ref{prophomotopy} (i))
	\begin{equation}\label{eqndecompositionofforms}
	A^{n,n}_{X}(\star D)=A^{n,n}_{X}(\log D)+\partial A^{n-1,n}_{X}(\star D)\,.
	\end{equation}
	In particular, for case $\ref{caseC}$, the operator $\mathfrak{r}$ coincides with the regularized integral $\dashint_{X}$ in \eqref{dfnregularizedintegral}.
	Theorem \ref{thmgeneralizedregularizationmapAintro} thus yields the desired cohomological formulation for $\dashint_{X}$.
	 Taking $\mathfrak{s}=\dashint_{X}|_{ \mathfrak{F}+\partial A^{n-1,n}_{X}(\star D)}\,$,
	Theorem \ref{thmgeneralizedregularizationmapBintro}  then  provides another cohomological formulation for the restriction of $\dashint_{X}$ on factorizable forms.\\

	The main idea of the proofs of Theorem \ref{thmgeneralizedregularizationmapAintro}
	and Theorem  \ref{thmgeneralizedregularizationmapBintro} is as follows. Starting from the space $ A^{n,n}_{X}(\log D)+\partial A^{n-1,n}_{X}(\star D)$ 
	in which forms with high order poles along $D$ do not contribute according to \eqref{eqnreasonableregularizationintro},	
	we take one step further by 		
	``getting rid of logarithms" in $ A^{n,n}_{X}(\log D)$.
	This is done
	using the tools of current cohomology and mixed Hodge structures \cite{Deligne:1971, Deligne:1974} respectively, which produces identification between regularized integrations and cohomological operations.

	It is possible that Theorem \ref{thmgeneralizedregularizationmapAintro}
	and Theorem \ref{thmgeneralizedregularizationmapBintro}
	can be further generalized.
	For example, the assumptions on
	projectivity  and smoothness of hypersurface arrangement  in $\ref{caseHA}$ can be weakened (see Proposition \ref{prophomotopy}  for related discussions). We however do not make this effort in this work.

	\begin{rem}
	While both cohomological formulations in Theorem \ref{thmgeneralizedregularizationmapAintro} and Theorem \ref{thmgeneralizedregularizationmapBintro} work with cohomologies constructed from \emph{global} differential forms (see
	Remark \ref{remreductionofpoleholomorphiccomplex} for related discussions), the coboundaries are different.
	To be more precise, let $D_{X}^{'n-1,n}(\star D)$ be the space of currents\footnote{That is, differential forms with coefficients being distributions instead of smooth functions.} of type $(n-1,n)$,
	then one has (see Section 	\ref{secextension}, Section
	\ref{sectracemaponcurrentcohomology})
		\[
	dA^{n-1}_{X}(\star D)\wedge \psi \subsetneq \partial A^{n-1,n}_{X}(\star D) \subsetneq \partial D_{X}^{'n-1,n}\,,\quad\text{for}~ \psi\in 	A^{0,n}_{X}\cap \mathrm{ker}\,\partial\,.
	\]
Theorem \ref{thmgeneralizedregularizationmapAintro}
works with the last set of coboundaries, while Theorem \ref{thmgeneralizedregularizationmapBintro}  works with the first set.
\end{rem}

		\begin{rem}
For case $\ref{caseC}$, a key feature of Theorem \ref{thmgeneralizedregularizationmapAintro} and Theorem \ref{thmgeneralizedregularizationmapBintro} lies in that it
	expresses regularized integrals in terms of  cohomological pairings on the \emph{compact} space $X=C^{n}$.
	This offers a particularly convenient approach to perform regularizations of divergent integrals on the naive compactification $C^{n}$ of $\mathrm{Conf}_{n}(C)$ instead of other subtle ones in the literature such as the Fulton-MacPherson compactification.

	For cases $\ref{caseC}, (\textbf{NCD})$, the map $\mathfrak{r}$
	is closely related (see Corollary \ref{coruniquenessofregularization}) to the principal value current introduced in  \cite{Dolbeault:1970, Herrera:1971}.
	It seems that this connection  make it convenient to
	compare our regularization scheme to
	other ones in the literature (e.g., \cite{Felder:2017, Felder:2018, Brown:2021a, Brown:2021b})
	which are often compared with 
	the aforementioned principal value current. 
\end{rem}

		\subsection*{Structure of the paper}

	In Section \ref{secreformulationextension} we give a brief overview
	on regularized integrals on configuration space integrals of Riemann surfaces \cite{Li:2020regularized, Li:2022regularized}, and slightly reformulate part of results therein.
	
	In Section 	\ref{seccurrentcohomology} we extend the notion of regularized integrals and 	
	provide a cohomological formulation for the extended notion
	using current cohomology.
	We also explain how to obtain smooth differential form representatives of the current cohomology class using the conjugate Dolbeault-Cech double complex.
	
	In Section 	\ref{secfactorizableforms}, we specialize to the subspace of factorizable forms for case $\ref{caseHA}$
	and provide another cohomological formulation of regularized integrals using mixed Hodge structures. We also discuss practical ways of obtaining the resulting cohomology class in terms of smooth differential forms.

	We demonstrate our results through some examples in Section \ref{secexamples}.
	Some discussions on the local absolute integrability of logarithmic forms
	are relegated to Appendix \ref{appendixlocalestimates}.


	\subsection*{Acknowledgement}
	
	 The author would like to thank Si Li for fruitful collaborations on related topics, and Jin Cao, Si Li, Zhixuan Sheng, Xinxing Tang and Dingxin Zhang for helpful conversations.
	J.~Z. is supported by the
	national key research and development
	program of China (No. 2022YFA1007100) and the Young overseas high-level talents introduction plan of China.


	\subsection*{Notation and conventions}
	
	\begin{itemize}
		\item 
		
		The space of sections of a sheaf will be denoted by the same letter
		as the sheaf, but in script instead of calligraphic.
		For example, for a sheaf $\mathcal{A}$ on $X$, we  denote $A=\Gamma(X,\mathcal{A})=\mathcal{A}(X)$ to be the space of global sections over $X$.   For a general open subspace $V$ of $X$, denote $A_{V}=\mathcal{A}(V)$.  
		
		\item For a complex of sheaves of Abelian groups $\mathcal{F}^{\bullet}$ on $X$,  
		denote by $\mathbb{H}^{k}(X,\mathcal{F}^{\bullet})$ or more simply $\mathbb{H}^{k}(\mathcal{F}^{\bullet})$ its $k$th hypercohomology.
		We also write interchangeably
		$H^{k}(A^{\bullet,\ell}_{X},\partial)$, $H^{k}_{\partial}(X, A_{X}^{\bullet,
		\ell})$ for 
		$H^{k}(X, (A_{X}^{\bullet,\ell},\partial))$, depending on the emphasis.
		
		\item 
		Let $\mathcal{D}_{X}^{p,q}$ be the sheaf of smooth $(p,q)$ forms on $X$ with compact support, called test forms of degree $(p,q)$.
		When $X$ is compact, one has $\mathcal{D}_{X}^{p,q}=\mathcal{A}_{X}^{p,q}$
		and we will use both interchangeably.
		Let $\mathcal{D}_{X}^{'n-p,n-q}$ be the linear dual of $\mathcal{D}_{X}^{p,q}$, called the sheaf of degree $(n-p,n-q)$ currents.

		\item Let $\omega$ be a differential form, if it is a locally integrable form (i.e., a $L^{1}$ form), we denote the corresponding integral current by $T_{\omega}$.
		We also use the same notation $T_{\omega}$ to denote the current (which needs to extend the integration operator) that $\omega$ might give rise to.
		
	\end{itemize}

	\section{Review on regularized  integrals on configuration spaces of Riemann surfaces}
	\label{secreformulationextension}

	In this section, we 
	first recall some basic constructions
	that are needed in studying regularized integrals.
	We then
	review the notion of regularized configuration space integrals of Riemann surfaces introduced in \cite{Li:2020regularized} and further developed in \cite{Li:2022regularized}, and reformulate
	some results therein.
	
		\subsection{Preliminaries}
	
We now list some constructions and operations
in case $(\textbf{H})$, which provide the basis of 
more complicated ones.

	\subsubsection{Poincar\'e, Leray, and holomorphic residue maps}\label{secresidueonmeromorphicforms}

	Suppose $D$ is a smooth hypersurface in a smooth compact complex manifold $X$, that is case $(\textbf{H})$.
	Then one can define 
	the Poincar\'e residue map $\mathrm{res}_{D}$ on $(\mathcal{A}^{\bullet}_{X}(\log D),d)$
	and the Leray residue  $\mathrm{res}_{L}$
	on 
	$(\mathcal{A}^{\bullet}_{X}(\star D),d)$.
	Both of these descend to maps on de Rham cohomology which are denoted by
	$\mathrm{Res}_{D},\mathrm{Res}_{L}$ respectively.
	In fact they coincide on the cohomology level due to the quasi-isomorphism (see Proposition \ref{prophomotopy}) between the two complexes of fine sheaves.
	See \cite{Aizenberg:1994} for detailed discussions on these residue maps.
	
	The residue maps $\mathrm{res}_{D}, \mathrm{res}_{L}$  restriction to maps $\mathrm{res}_{\partial}$
	on
	$(\Omega^{p}_{X}(\log D),\partial), (\Omega^{p}_{X}(\star D),\partial)$. 
		Here 		
	$\Omega^{\bullet}_{X}(\log D), \Omega_{X}^{\bullet}(\star D)$ are the holomorphic versions of 
	$\mathcal{A}^{\bullet}_{X}(\log D), \mathcal{A}_{X}^{\bullet}(\star D)$, respectively.
	The map $\mathrm{res}_{\partial}$ can be furthermore defined on the conjugate Dolbeault complexes
	 $(\mathcal{A}^{\bullet,0}_{X}(\log D), \partial)$ and $(\mathcal{A}^{\bullet,0}_{X}(\star D), \partial)$,
	and thus  $\mathrm{Res}_{\partial}$
	on the cohomology spaces
	\[
 H^{\bullet}(A_{X}^{\bullet,0}(\log D), \partial)\,,\quad	H^{\bullet}(A_{X}^{\bullet,0}(\star D), \partial)\,.
	\]
	Concretely, one has
	\begin{equation}\label{eqnholomorphicresidue}
		\mathrm{res}_{\partial}={1\over 2\pi \mathbf{i}}\lim_{\varepsilon\rightarrow 0}\int_{\partial B_{\varepsilon}(D)}\quad \,,
	\end{equation}
	where $B_{\varepsilon}(D)$ is a disk bundle of radius $\varepsilon$ over the divisor $D$. The result is independent \cite{Herrera:1971, Coleff:1978}  (see also \cite{Li:2020regularized}) of 
	the local defining equation for $D$ and the Hermitian metric compatible with the complex structure.
	In particular, from the description of the right hand side of \eqref{eqnholomorphicresidue} one has \cite{Herrera:1971} (see also
	 			\cite[Lemma 2.2]{Li:2020regularized}) 
	\begin{equation}\label{eqnvanishingofholomorphicresiduebytypereasonscaseDol0}
	\mathrm{res}_{\partial}(\omega)=0\,,\quad \forall\, \omega\in A^{p,q}_{X}(\star D)\,,~q\geq 1\,.
	\end{equation}
	Again this vanishing result holds for any choice of local defining equation for $D$ and Hermitian metric.
		Due to \eqref{eqnvanishingofholomorphicresiduebytypereasonscaseDol0}, the map $\mathrm{res}_{\partial}$ is naturally
	extended
	to $(\mathcal{A}_{X}^{\bullet}(\log D),\partial),
	(\mathcal{A}_{X}^{\bullet}(\star D),\partial)$.
	
	The following alternative sheaf-level description is also useful
	\begin{equation}\label{eqnholresidue}
		\mathrm{res}_{\partial}: {
			\mathcal{A}^{m}_{X}(\star D )\over
			\partial  	  \mathcal{A}^{m-1}_{X}(\star D )
			+ \overline{F^{1}}\mathcal{A}^{m}_{X}(\star D )}\rightarrow 
		\mathcal{A}^{m-1}_{D}
		\,,
	\end{equation}
	where the filtration $F^{\bullet}$ is the Hodge filtration given by
	\begin{equation}\label{eqndfnHodgefiltration}
	F^{p}\mathcal{A}^{m}_{X}(\star D )=\bigoplus_{k\geq p}\mathcal{A}^{k,m-k}_{X}(\star D )\,.
	\end{equation}
	The map $\mathrm{res}_{\partial}$  is related to the Leray residue map $\mathrm{res}_{L}$ by
	\[
	\mathrm{res}_{\partial}(\omega)=\pi_{\mathrm{top}}\, \mathrm{res}_{L}(\omega)\,,\quad\forall\,
	\omega\in A^{m}_{X}(\star D)\,.
	\]
	where $\pi_{\mathrm{top}}$ is the projection to the top component in the Hodge decomposition,
	see \cite[Proposition 2.6]{Felder:2017}.

	\subsubsection{Currents}

We again consider case $(\textbf{H})$.
	Recall that
	the action of differential operators on currents  is induced by that on test forms. For example, for
	the integral current $T_{\rho}$ determined from an integrable form $\rho\in L^{1}$  and 	
	 a test form $\psi\in D^{\bullet,\bullet}_{X}$, one has
	\begin{equation}\label{eqndfnpartialcurrent}
		(\bar{\partial}T_{\rho})(\psi):=(-1)^{|\rho|+1}
		T_{\rho}( \bar{\partial} \psi)
		\,.	\end{equation}
	Hereafter
	$|\rho|$ stands for the degree of the differential form $\rho$.
	By Stokes theorem, \eqref{eqnholomorphicresidue} and \eqref{eqnvanishingofholomorphicresiduebytypereasonscaseDol0},
	one then has the following relation
	\begin{equation}\label{eqnbarpartialcurrent}
		(\bar{\partial}T_{\rho})(\psi)=T_{\bar{\partial}\rho}(\psi)+2\pi \mathbf{i}\, \mathrm{res}_{\partial}(\rho \psi)\,,\quad \forall\, \psi\in D^{\bullet,\bullet}_{X}\,.
	\end{equation}
	In particular,
	any $\rho \in L^{1}\cap \mathrm{ker}\,\bar{\partial}$ defines a residue current
	$
	\bar{\partial}T_{\rho}\in D_{X}^{'\bullet,\bullet}
	$
	that satisfies
	\begin{equation}
	(\bar{\partial}T_{\rho})(\psi)=2\pi \mathbf{i}\,
	\mathrm{res}_{\partial}(\rho \psi)\,.
	\end{equation}
	The relation \eqref{eqnvanishingofholomorphicresiduebytypereasonscaseDol0}
	and 
	the same reasoning above proving \eqref{eqnbarpartialcurrent} 
	gives 
	\begin{equation}\label{eqnpartialcurrent}
		{\partial}T_{\rho}=T_{{\partial}\rho}+2\pi \mathbf{i} \,\mathrm{res}_{\partial}(\rho \wedge \cdot)=T_{{\partial}\rho}\,,
	\end{equation}
	if all of the quantities above make sense.
	
	Note that the validity of the above formulas
	does not require $\rho$ to lie in $L^{1}$: $\rho $ can  admit considerable singularities
	as discussed in Section \ref{secextension}
 and Appendix \ref{appendixlocalestimates} below.

	\subsection{Regularized integrals on configuration spaces of Riemann surfaces}

	We next review the notion of regularized configuration space integrals of Riemann surfaces introduce 
	in \cite{Li:2020regularized} and reformulate some results therein.
	
		\subsubsection{$\partial$-cohomology, regularized integrations as currents}

	We first rephrase 
part of the results in \cite{Li:2020regularized} (more precisely, \cite[Lemma 2.33, Lemma 2.34, Lemma 2.35, Theorem  2.36, Definition 2.37]{Li:2020regularized}) for the regularized integrals in case \ref{caseC},
in terms of currents.

	\begin{thm}
		\label{thmformdefiningcurrent}
		Consider  case $\ref{caseC}$.
		\begin{enumerate}[i).]
			\item
			Any $\omega\in A_{X}^{n,n}(\star D)$ defines a  current $T_{\omega}$ 
			that	
			is independent of choices such as the Hermitian metric compatible with the complex structure on $C$, with
			\begin{equation}\label{eqnregularizedintegral=principalvalue}
				T_{\omega}(1)=\dashint_{X} \omega\cdot 1\,.
			\end{equation}	
			
			Furthermore, one has $T_{\partial \gamma}=0$ for $\gamma\in A_{X}^{n-1,n}(\star D)$.

			\item For the $n=1$ case, $T_{\omega}$ coincides with the Cauchy principal value current defined by $\omega$.
			
				\item The regularized integral on $X=C^{n}$ is equal to the
			iterated regularized integrals on $C$, and is independent of the order of iteration.
		\end{enumerate}
	\end{thm}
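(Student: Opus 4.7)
For part (i), my plan is to \emph{construct} the current $T_\omega$ via the logarithmic decomposition from Proposition~\ref{prophomotopy}(i). For $\omega\in A_X^{n,n}(\star D)$, write $\omega=\alpha+\partial\beta$ with $\alpha\in A_X^{n,n}(\log D)$ and $\beta\in A_X^{n-1,n}(\star D)$, and \emph{define} $T_\omega:=T_\alpha$, the integration current of the locally integrable form $\alpha$ (Lemma~\ref{lemintegrabilityoflogpart}). Under this definition the identity $T_\omega(1)=\int_X\alpha=\dashint_X\omega$ is immediate from Definition~\ref{dfnregularizedintegral}, and the assertion $T_{\partial\gamma}\equiv 0$ holds \emph{tautologically as a current}: for $\omega=\partial\gamma$ the distinguished decomposition with $\alpha=0$ forces $T_\omega=T_0=0$ on every test form, not merely on $\psi=1$.

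The technical core is then well-definedness of $T_\omega:=T_\alpha$ independently of the choice of decomposition and of Hermitian metric. Given $\omega=\alpha_1+\partial\beta_1=\alpha_2+\partial\beta_2$, I must show that $\eta:=\alpha_1-\alpha_2=\partial(\beta_2-\beta_1)$ has trivial integration current, i.e.\ $\int_X\eta\wedge\psi=0$ for every smooth $\psi$. My plan is to integrate by parts over $X\setminus U_\varepsilon(D)$ for a shrinking tubular neighbourhood of $D$ and use that $\bar\partial(\beta_2-\beta_1)=0$ and $(\beta_2-\beta_1)\wedge\bar\partial\psi=0$ by bidegree reasons, giving
\[
\int_{X\setminus U_\varepsilon(D)}\!\partial(\beta_2-\beta_1)\wedge\psi = -\int_{\partial U_\varepsilon(D)}(\beta_2-\beta_1)\wedge\psi+\int_{X\setminus U_\varepsilon(D)}(\beta_2-\beta_1)\wedge\partial\psi.
\]
As $\varepsilon\to 0$ the boundary term converges to $2\pi\mathbf{i}\cdot\mathrm{res}_\partial((\beta_2-\beta_1)\wedge\psi)$ via \eqref{eqnholomorphicresidue}, which vanishes by the bidegree argument \eqref{eqnvanishingofholomorphicresiduebytypereasonscaseDol0} since $(\beta_2-\beta_1)\wedge\psi$ carries the top $d\bar z$-wedge coming from $\beta_i$. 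The remaining bulk integral is then reabsorbed by iterating the same manoeuvre: peeling off one factor of $\partial$ at a time, the constraint that $\partial(\beta_2-\beta_1)$ is logarithmic successively reduces the pole order of the exact-parameter until only Stokes-type cancellations remain, closing the induction. Metric-independence is automatic because no step above depends on a Hermitian structure.

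For part (ii), in dimension one I would verify $T_\omega$ equals the CPV current summand by summand in \eqref{eqndecompositionofforms}: on $A_C^{1,1}(\log D)$ both currents reduce to ordinary $L^1$ integration against the log representative, while on $\partial A_C^{0,1}(\star D)$ both vanish (the CPV by the standard angular cancellation in polar coordinates around each puncture, and $T_\omega$ tautologically by part~(i)). For part (iii), my plan is to invoke the Fubini-type factorization of $\dashint_{C^n}$ from \cite{Li:2020regularized,Li:2022regularized}: carrying out the log decomposition along one factor of $C^n$ at a time expresses $\dashint_{C^n}$ as an iterated one-variable regularized integral on $C$, and order-independence then follows from commutativity of the one-variable CPVs identified in~(ii) applied to the absolutely integrable intermediate log forms. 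The main obstacle throughout is the iterated vanishing argument in part~(i): bookkeeping how the high-order poles in $\beta_2-\beta_1$ collapse against one another so that only bidegree-trivial residues survive is what upgrades the numerical identity $\dashint_X\partial\beta=0$ of Definition~\ref{dfnregularizedintegral} to the current-level identity $T_{\partial\gamma}\equiv 0$.
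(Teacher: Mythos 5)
The step you single out as the technical core --- well-definedness of $T_{\omega}:=T_{\alpha}$ --- is not merely unproven: it is false, and this brings down your whole construction of part (i). Two decompositions \eqref{eqnlogdecomposition} of the same $\omega$ may differ by a shift $(\alpha,\beta)\mapsto(\alpha+\partial\delta,\,\beta-\delta)$ with $\delta\in A^{n-1,n}_{X}$ \emph{smooth}, since $\alpha+\partial\delta$ is then still logarithmic. For such $\delta$ your claim $\int_{X}\partial\delta\wedge\psi=0$ for every test $\psi$ says, after Stokes, that $\int_{X}\partial\psi\wedge\delta=0$ for every $\psi$, i.e.\ $\partial T_{\delta}=0$, which fails: on $X=E$ take $\delta=g\,{d\bar z}/(\bar\tau-\tau)$ with $g$ smooth, real and non-constant, and $\psi=\overline{g_{z}}$; then $\int_{X}\partial\delta\cdot\psi=\int_{X}|g_{z}|^{2}\,dz\wedge d\bar z/(\bar\tau-\tau)\neq 0$. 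This counterexample involves no poles at all, so your proposed rescue --- iterated pole-order reduction of $\beta_{2}-\beta_{1}$ --- cannot absorb the bulk term $\int_{X}(\beta_{2}-\beta_{1})\wedge\partial\psi$; only its value at $\psi=1$ vanishes, and that is exactly the numerical statement $\dashint_{X}\partial\beta=0$ of Definition \ref{dfnregularizedintegral} (cf.\ Lemma \ref{lemindependenceondecompositionaseX}), which does not upgrade to a current-level identity. Consequently your ``tautological'' vanishing $T_{\partial\gamma}\equiv 0$ is an artifact of an ill-defined assignment, and part (ii) fails outright for your current: $T_{\alpha}$ restricted to $U=X-D$ is integration against $\alpha$, not against $\omega$, whereas the Cauchy principal value current of $\omega$ restricts to $\omega$ there. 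Concretely, in Example \ref{exwpn=1definitioncomputation}, for $f$ supported away from $D$ the principal value gives $\int_{X} f\,\wp\, dz\wedge d\bar z/(\bar\tau-\tau)$, while your current gives $-\widehat{\eta}_{1}\int_{X} f\,\mathrm{vol}$.

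The paper (rephrasing \cite{Li:2020regularized}) performs the two operations in the opposite order: since $A^{n,n}_{X}(\star D)$ is a $C^{\infty}_{X}$-module, one sets $T_{\omega}(f):=\dashint_{X}\omega f$, decomposing $\omega f$ afresh for each test function $f$ (this is exactly how Proposition \ref{propformdefiningcurrentcaseD} proceeds in the extended setting). Well-definedness then requires only the scalar identity $\dashint_{X}\partial\beta'=0$ applied to each $\omega f$, never a current-level vanishing; correspondingly the ``Furthermore'' clause must be read, consistently with Proposition \ref{propformdefiningcurrentcaseD} and \eqref{eqnpartialcurrent}, as the evaluation statement $T_{\partial\gamma}(1)=0$ --- as a current one has $T_{\partial\gamma}=\partial T_{\gamma}$, nonzero in general, exactly as the smooth example above shows. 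With this definition, part (ii) is the short Stokes computation displayed after the theorem, combining \eqref{eqnholomorphicresidue} with the type-reason vanishing \eqref{eqnvanishingofholomorphicresiduebytypereasonscaseDol0}, with metric-independence inherited from \cite{Herrera:1971, Coleff:1978}. Finally, part (iii) is a substantive Fubini-type theorem of \cite{Li:2020regularized}, and in the paper's logic it is an \emph{input} (together with the commutativity of $\partial$ and $\dashint$ and the product structure $X=C^{n}$) to the independence-of-decomposition statement for general $n$, rather than a corollary of one-variable commutativity as your sketch suggests.
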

	Let us illustrate the connection to the principal value current given  in Theorem \ref{thmformdefiningcurrent} (ii), for the $n=1$ case in $\ref{caseC}$.
	Taking $B_{\varepsilon}(D)$ to be a small disk bundle over $D$, then 
	by \eqref{eqndfnregularizedintegral} and \eqref{eqnvanishingofholomorphicresiduebytypereasonscaseDol0} one has for the Cauchy principal value
	\begin{eqnarray*}
	\mathrm{P.V.}\int_{X}\omega&=&
	\lim_{\varepsilon\rightarrow 0}\left(
	\int_{X\setminus B_{\varepsilon}(D)} \alpha
	+\int_{X\setminus B_{\varepsilon}(D)} \partial\beta \right)\\
	&=&		\lim_{\varepsilon\rightarrow 0}
	\int_{X\setminus B_{\varepsilon}(D)} \alpha
	-	\lim_{\varepsilon\rightarrow 0}\int_{\partial  B_{\varepsilon}(D)} \beta 
	=
	\int_{X} \alpha
	-	2\pi \mathbf{i}\, \mathrm{res}_{\partial}\beta =	\int_{X} \alpha\,.
	\end{eqnarray*}
		A similar reasoning and \eqref{eqnvanishingofholomorphicresiduebytypereasonscaseDol0} also explains why \eqref{eqndfnregularizedintegral}  is independent of the decomposition $	\omega=\alpha+\partial \beta$ in \eqref{eqnlogdecomposition} in this case.
		
	For the general case $\ref{caseC}$ studied in \cite{Li:2020regularized}, the independence of the regularized integral on the decomposition \eqref{eqnlogdecomposition} is
		shown by first proving the commutativity of $\partial$ and $\dashint$, then using the product structure of $X=C^{n}$,  
		 Theorem \ref{thmformdefiningcurrent} (iii),
and \eqref{eqnvanishingofholomorphicresiduebytypereasonscaseDol0}.

	\begin{ex}\label{exwpn=1definitioncomputation}
		Let $X=E=\mathbb{C}/(\mathbb{Z}\oplus \mathbb{Z}\tau)$ be an elliptic curve, and $D=0$ be the origin represented by $0\in \mathbb{C}$.
		Consider $\omega=\wp(z)dz\wedge {d\bar{z}\over \bar{\tau}-\tau}$.
	Let $\theta$ be the Jacobi theta function $\vartheta_{({1\over 2},{1\over 2})}$ and
		\begin{equation}\label{eqndfnofZhat}
		\widehat{\theta}(z):=\theta(z)\cdot \exp(-2\pi {(\mathrm {im}~z)^2\over \mathrm{im}~\tau})\,,\quad
		\widehat{Z}=\partial_{z} \ln \widehat{\theta}=\zeta(z)-z\widehat{\eta}_{1}		+\bar{z}\cdot {-\pi\over \mathrm{im}\,\tau}\,.
			\end{equation}
		Here $\zeta(z),\wp(z)$ are the Weierstrass elliptic functions and 		
		\begin{equation}
		\widehat{\eta}_1:=
		\eta_{1}+{-\pi\over \mathrm{im}\,\tau}\,,
		\end{equation}
		 with $\eta_{1}=	 
		 {\pi^2\over 3}E_{2}$ being the second Eisenstein series.
	This gives
		\begin{equation}\label{eqnderivativesofZhat}
		\partial\widehat{Z}=-(\wp(z)+\widehat{\eta}_{1})dz\,,\quad
		\bar{\partial}\widehat{Z}=d(2\pi \mathbf{i}\, {\bar{z}\over \bar{\tau}-\tau})\,.
		\end{equation}
	It follows that
		\[
		\omega=-\widehat{\eta}_1 \mathrm{vol}-\partial(\widehat{Z} {d\bar{z}\over \bar{\tau}-\tau})\,,\quad \mathrm{vol}:={\mathbf{i}\over 2\,\mathrm{im}\,\tau}dz\wedge d\bar{z}\,.
		\]
		By the definitions in \eqref{eqnlogdecomposition}, \eqref{eqndfnregularizedintegral}, this gives 
		$\dashint_{X}\omega=-\widehat{\eta}_1$.			
			
		Consider also
		\[
		\omega=\widehat{Z}dz\wedge {d\bar{z}\over \bar{\tau}-\tau}=\partial\left( \log (\widehat{\theta}\bar{\theta})\wedge {d\bar{z}\over \bar{\tau}-\tau}\right)\,.
		\]
				Since $\ln (\widehat{\theta} \bar{\theta} )\wedge d\bar{z}\notin A_{X}^{0,1}(\star D)$, the above does not give a decomposition \eqref{eqnlogdecomposition}.
		However, according to \eqref{eqnregularizedintegral=principalvalue},  Stokes theorem, and \eqref{eqndfnregularizedintegral}, we still have
		\begin{eqnarray}\label{eqnvanihsingofloglog}
		\dashint_{X}\omega
		=	\lim_{\varepsilon\rightarrow 0}
		\int_{X\setminus B_{\varepsilon}(0)}\partial (\ln (\widehat{\theta} \bar{\theta} )\wedge  {d\bar{z}\over \bar{\tau}-\tau})
		&=&\lim_{\varepsilon\rightarrow 0}
		\int_{X\setminus B_{\varepsilon}(0)}d (\ln (\widehat{\theta} \bar{\theta} )\wedge  {d\bar{z}\over \bar{\tau}-\tau})\nonumber\\
&		=&-\lim_{\varepsilon\rightarrow 0}
		\int_{\partial B_{\varepsilon}(0)}\ln (\widehat{\theta} \bar{\theta} )\wedge  {d\bar{z}\over \bar{\tau}-\tau}=0\,.
		\end{eqnarray}
		\xxqed
	\end{ex}

	\subsubsection{$d$-cohomology, regularized integrations as iterated residues}\label{secdcohomologycaseC}
	
For the $n=1$ case of \ref{caseC}, one has (see Proposition \ref{prophomotopy} (ii) for a more general statement)
		\begin{equation}\label{eqnvanishingoftopcoh}
		H^{2}(A^{\bullet}_{C}(\star D ),d)
		\cong H^{2}(C, Rj_{*}\mathbb{C}_{U})\cong H^{2}(U)=0
		\,.
	\end{equation}
	Now take any $\omega\in A_{C}^{1,1}(\star D)$. 
		It follows from \eqref{eqnvanishingoftopcoh} that there exists $\gamma\in A^{1}_{C}(\star D )$ such that
	\begin{equation}\label{eqndprimitive}
	\omega=d\gamma\,.
	\end{equation}
	Using the relation in Theorem \ref{thmformdefiningcurrent} between regularized integral and Cauchy principal value, we have
	\[
	\dashint_{C}\omega=
	\mathrm{P.V.} \int_{C} \omega=
	\lim_{\varepsilon\rightarrow 0}
	\int_{C\setminus B_{\varepsilon}(D)} d\gamma
	=-	\lim_{\varepsilon\rightarrow 0}
	\int_{\partial  B_{\varepsilon}(D)} \gamma
	=-2\pi \mathbf{i}\, \mathrm{res}_{\partial}\gamma\,.
	\]	
	By Stokes theorem, the above is independent of the choice of the primitive $\gamma$. 	
	For this reason, we shall often write $\gamma=d^{-1}\omega$.

	\begin{ex}\label{exwpn=1residuecomputation}
		Consider as in Example \ref{exwpn=1definitioncomputation} the case with $X=E, D=0$,
		with $\omega=\wp(z)dz\wedge {d\bar{z}\over \bar{\tau}-\tau}$.
		Then from \eqref{eqnderivativesofZhat} one has
		\[
	 \omega=d(-{1\over 2\pi \mathbf{i} } \wp \widehat{Z}dz)\,.
		\]
		Using the Laurent series expansions of the Weierstrass elliptic functions, this gives
		\[
		\dashint_{X}\omega=-2\pi \mathbf{i}\, \mathrm{res}_{\partial}	(	d^{-1}\omega)
		=-\widehat{\eta}_1\,.
		\]
		For the case $\omega=\widehat{Z}dz\wedge {d\bar{z}\over \bar{\tau}-\tau}$, again using \eqref{eqnderivativesofZhat} one has
		\[
		\omega=d(-{1\over \pii}{1\over 2}\widehat{Z}^2 dz)\,.
		\]
		By parity, one has $\dashint_{X}\omega=\mathrm{res}_{\partial}({1\over 2}\widehat{Z}^2 dz)=0$.
		\xxqed
	\end{ex}
	
	Instead of finding the primitive $\gamma$ in \eqref{eqndprimitive} on $C$, one can lift to a cover, say the universal cover of $C$, and then perform the computations in a fundamental domain $F$.
	This is particularly useful in computing  the regularized integral of a factorizable form $\omega=\phi\wedge \psi$ 
	as defined in  \eqref{eqndfnfactorizableforms}.
	In fact, let $u(p)=\int^{p}_{p_{*}}\psi$ be the primitive of $\psi$, where $p_{*}\cap D=\emptyset$.
	Then with the condition $d\phi=\bar{\partial}\phi=0$ one has
	\begin{equation}\label{eqncontacttermn=1}
		\dashint_{C}\omega
		=	\lim_{\varepsilon\rightarrow 0}
		\int_{F\setminus B_{\varepsilon}(D)} 
		\phi\wedge \psi
		=\lim_{\varepsilon\rightarrow 0}
		\int_{F\setminus B_{\varepsilon}(D)} 
		\phi\wedge du
		=2\pi \mathbf{i}\,\mathrm{res}_{\partial}(\phi u)-\int_{\partial F} \phi u\,.
	\end{equation}
	This is an analogue of the Cauchy integral formula.
	
	The product structure of $X=C^{n}$ in  case $\ref{caseC}$ reduces a regularized integral
	to an iterated regularized integral by Theorem \ref{thmformdefiningcurrent} (iii).
	The iterated version of \eqref{eqncontacttermn=1} then  leads to a mathematical formulation \cite{Li:2020regularized, Li:2022regularized}  of the so-called contact term
	singularities in the context of 2d chiral CFTs, which schematically takes the form
	\[
	\text{anomaly free regularized integral}=\text{point-splitting regularization}+\text{contact term singularities}\,.
	\]
	Here by anomaly free we mean that the construction works for the relative version of Riemann surfaces. 
	For instance, for the genus one case, being anomaly free means  being modular when regarded as constructions on 
	the universal covering space.

	\begin{ex}\label{exwzn=1residuecomputation}
		Consider as in Example \ref{exwpn=1definitioncomputation} and Example \ref{exwpn=1residuecomputation} the case with $X=E, D=0$.
		Suppose $\omega=\phi\wedge \psi$ with $\phi\in 	A^{1,0}_{X}(\star D )\cap \mathrm{ker}\,\bar{\partial}$
		and
		$
		\psi={d\bar{z}\over \bar{\tau}-\tau}
		$.
		Then from \eqref{eqnderivativesofZhat} one has
		\[
		\phi\wedge \psi=\phi\wedge {d\bar{z}-dz\over \bar{\tau}-\tau}={1\over 2\pi \mathbf{i}}\phi \wedge d\widehat{Z}\,,
		\]
		where ${d\bar{z}-dz\over \bar{\tau}-\tau}$ is the Poincar\'e dual of the $A$-cycle class in the weak Torelli marking $(A,B)$ for the elliptic curve $E$.
		Here $A$ is the class of the cycle connecting $0,1$
		and $B$ the one connecting $0,\tau$ in the fundamental domain on the universal cover $\mathbb{C}$ of $E$.
		Now in  \eqref{eqncontacttermn=1} one can take
		\[
		u={\bar{z}\over \bar{\tau}-\tau}\,,\quad{ \bar{z}-z\over \bar{\tau}-\tau}\,,\quad  \widehat{Z}\,,\cdots
		\]
		
		Taking the choice $u= {z-\bar{z}\over \bar{\tau}-\tau}$, one then obtains the relation between
		regularized integral, $A$-cycle integral and residues \cite[Proposition 2.18, Proposition 2.20, Proposition 2.22]{Li:2020regularized}. 
		
		Taking $u=\widehat{Z}$, as  in Example \ref{exwpn=1residuecomputation} one expresses the regularized integral in terms of residues of 
		as
		polynomials in $\widehat{Z}$ with coefficients being elliptic functions, known as almost-elliptic functions.		
		Based on some structural results on the latter, the calculation of the iterated regularized integrals in case $\ref{caseC}$
		can be drastically simplified
		in terms of iterated residues of highly structured integrands \cite{Li:2022regularized}.
		
			\xxqed
	\end{ex}

	\section{Regularized integration as trace map on current cohomology}
	\label{seccurrentcohomology}
	
	In this section we extend the notion of regularized integrals defined for case $\ref{caseC}$. We then prove Theorem \ref {thmgeneralizedregularizationmapAintro}, which in particular provides a cohomological formulation of the  regularized integration $\dashint_{X}$ in 
	case $\ref{caseC}$ and $(\textbf{NCD})$.
	We also explain how to obtain smooth differential form representatives of the cohomology class
	$[\omega_{\mathrm{Dol}}]\in H^{n}(A_{X}^{\bullet,n},\partial)$.	
	
	\subsection{Comparison theorems}
	We list a few comparison results that are useful for later purposes.
	
	\begin{prop}\label{prophomotopy}
~
		\begin{enumerate}[i).]
					\item 	The following statement holds for case $\ref{caseC}$ and $(\textbf{NCD})$. The inclusion
$	(\mathcal{A}^{\bullet,n}_{X}(\log D),\partial )\hookrightarrow (\mathcal{A}^{\bullet,n}_{X}(\star D),\partial )
			$
			induces a surjection
					\begin{equation}
						\label{eqnsmoothlogstarquasiisopartialnndegree}
						H^{n}(\mathcal{A}^{\bullet,n}_{X}(\log D),\partial )\rightarrow 
			H^{n}(\mathcal{A}^{\bullet,n}_{X}(\star D),\partial )
				\end{equation}
			whose kernel lies in  
	$\partial\left( H^{n-1}(\mathcal{A}^{\bullet,n}_{X}(\star D)/\mathcal{A}^{\bullet,n}_{X}(\log D),\partial )\right)
	$.
			In particular, for any $\omega\in A_{X}^{n,n}(\star D)$, one has a (non-unique) decomposition
			\begin{equation}\label{eqnlogdecompositioncasepartial}
				\omega=\alpha+\partial \beta\,,\quad\quad
				\alpha\in A^{n,n}_{X}(\log D )\,,\quad
				\beta\in A^{n-1,n}_{X}(\star D )\,.
			\end{equation}
			\item 	The following statement holds for case $\ref{caseHA}$. The inclusion
$(\mathcal{A}^{\bullet}_{X}(\log D),d )\hookrightarrow (\mathcal{A}^{\bullet}_{X}(\star D),d)$
			induces an isomorphism at the $n$th cohomology
			\begin{equation}\label{eqnsmoothlogstarquasiisod}
				H^{n}(\mathcal{A}^{\bullet}_{X}(\log D),d )\cong  H^{n}(\mathcal{A}^{\bullet}_{X}(\star D),d)\,.
			\end{equation}		
			In particular, for any $\omega\in A_{X}^{n}(\star D)\cap \mathrm{ker}\,d$, one has a decomposition
			\begin{equation}\label{eqnlogdecompositiond}
				\omega=\alpha+d \beta\,,\quad\quad
				\alpha\in A^{n}_{X}(\log D )\cap \mathrm{ker}\,d\,,\quad
				\beta\in A^{n-1}_{X}(\star D )\,.
			\end{equation}
		\end{enumerate}	
	\end{prop}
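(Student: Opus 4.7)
My plan is to prove both parts by a local reduction-of-pole argument followed by a partition-of-unity globalization, extracting the cohomological statements from the short and long exact sequences associated to the pair $(\log,\star)$. In each of cases \ref{caseC}, $(\textbf{NCD})$, \ref{caseHA}, the local picture at a point of $D$ is a union of coordinate hyperplanes or of general hyperplanes in $\mathbb{C}^{n}$, and the same integration-by-parts identity drives the reduction.

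For (i), I would pick local coordinates $(s_1,\ldots,s_k,z_{k+1},\ldots,z_n)$ in which $D=\{s_1\cdots s_k=0\}$ and expand any $\omega \in A^{n,n}_X(\star D)$ as a finite sum of terms indexed by multi-pole-orders $(\ell_1,\ldots,\ell_k)$. For any summand with $\ell_a\ge 2$, the identity
$$\frac{\partial s_a}{s_a^{\ell_a}}\wedge \eta \;=\; -\frac{1}{\ell_a-1}\,\partial\!\left(\frac{\eta}{s_a^{\ell_a-1}}\right) \;+\; \frac{1}{\ell_a-1}\cdot\frac{\partial\eta}{s_a^{\ell_a-1}}$$
trades a pole of order $\ell_a$ for one of order $\ell_a-1$, modulo $\partial$ applied to an element of $A^{n-1,n}_X(\star D)$. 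Iterating drives every multi-order down to $1$, producing a local decomposition $\omega = \alpha + \partial\beta$ with $\alpha \in A^{n,n}_X(\log D)$. A partition of unity subordinate to a finite cover by such charts then globalizes the decomposition, establishing surjectivity at $H^n$. The kernel assertion is then formal: the short exact sequence
$$0\to \mathcal{A}^{\bullet,n}_X(\log D) \to \mathcal{A}^{\bullet,n}_X(\star D) \to \mathcal{A}^{\bullet,n}_X(\star D)/\mathcal{A}^{\bullet,n}_X(\log D) \to 0$$
induces a long exact sequence in which the kernel of $H^n(\log D)\to H^n(\star D)$ is precisely the image of the connecting homomorphism from $H^{n-1}$ of the quotient.

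For (ii), I would promote this to a full quasi-isomorphism by identifying both complexes with $Rj_*\mathbb{C}_U$. The meromorphic complex $(\mathcal{A}^\bullet_X(\star D),d)$ is a fine resolution of $Rj_*\mathbb{C}_U$: its stalk at a point of $U$ is the ordinary smooth de Rham complex, and at a boundary point one combines an antiholomorphic Poincar\'e lemma with the classical Grothendieck statement that $(\Omega^\bullet_X(\star D),d)$ resolves $Rj_*\mathbb{C}_U$. For $(\mathcal{A}^\bullet_X(\log D),d)$, the $(\textbf{NCD})$ case is Deligne's theorem; in the general \ref{caseHA} case I would argue locally in a holomorphic chart where $D$ is a union of hyperplanes in $\mathbb{C}^n$, appealing to Brieskorn and Orlik--Solomon to identify the cohomology of the complement of a hyperplane arrangement as the algebra generated by $ds_a/s_a$, i.e.\ by elements of the log complex. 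Hence the inclusion is a quasi-isomorphism of fine sheaf complexes, inducing an isomorphism on all cohomologies, in particular at $H^n$. The decomposition displayed in the statement then follows by choosing any log-closed representative of the class of $\omega$ and taking the difference.

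The hardest step will be, in (i), the bookkeeping for the partition-of-unity globalization: the local reduction is clean, but after patching one must verify that the residual logarithmic part and the $\partial$-primitive assemble into globally defined elements of $A^{n,n}_X(\log D)$ and $A^{n-1,n}_X(\star D)$ respectively, which requires some care when a single chart sees only a subset of the irreducible components of $D$ that pass through overlapping charts. In (ii), the delicate point is ensuring that the Brieskorn/Orlik--Solomon local model is compatible across charts as one moves between different strata of the arrangement; this reduces to checking that the transition maps preserve the logarithmic filtration, which follows from the smoothness of each $D_a$ and of each intersection $D_I$ required by the setup of case \ref{caseHA}.
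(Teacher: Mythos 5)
Your part (ii) follows essentially the paper's own route (Atiyah--Hodge/Grothendieck comparison for the meromorphic complex, Brieskorn--Orlik--Solomon logarithmic comparison for the log complex), so the real issue is part (i), where there is a genuine gap: your local model assumes the branches of $D$ through a point are cut out by functions $s_1,\dots,s_k$ that extend to a coordinate system. That is precisely the $(\textbf{NCD})$ hypothesis, and it fails for case \ref{caseC} as soon as $n\geq 3$: near a point of $\Delta_{12}\cap\Delta_{13}\cap\Delta_{23}$ the divisor is $\{w_1w_2(w_2-w_1)=0\}$ with $w_1=z_1-z_2$, $w_2=z_1-z_3$, three linear forms of which only two can serve as coordinates. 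Your integration-by-parts identity then fails to terminate. In the genuine NCD situation the error term $\partial\eta/s_a^{\ell_a-1}$ creates no new poles, because after wedging with the remaining holomorphic differentials (which for an $(n,n)$-form can all be absorbed into $\eta$) only the $\partial/\partial s_a$-derivative of the coefficient survives, and the other branches are independent of $s_a$; but $w_2-w_1$ \emph{does} depend on $w_1$, so reducing the pole along $w_1$ raises the pole order along $\{w_2=w_1\}$ by one, and the total pole order does not decrease — the naive iteration can cycle forever. Making the reduction terminate requires exactly the combinatorial input the paper invokes and you omit: partial-fraction rewriting via the Arnold relations $\frac{1}{z_{ij}z_{jk}}+\frac{1}{z_{jk}z_{ki}}+\frac{1}{z_{ki}z_{ij}}=0$, i.e.\ the Orlik--Solomon structure of configuration spaces, as in the lemmas of \cite{Li:2020regularized} cited in the paper (and, for case $(\textbf{H})$, the spectral sequence argument of \cite[Proposition 2.1]{Felder:2017}). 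As written, your part (i) proves only the $(\textbf{NCD})$ case.

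A secondary weakness is the globalization you flag but do not resolve. One pass of a partition of unity gives $\omega=\sum_i\chi_i\alpha_i+\partial\bigl(\sum_i\chi_i\beta_i\bigr)-\sum_i\partial\chi_i\wedge\beta_i$, and the cross terms $\partial\chi_i\wedge\beta_i$ still lie in $A^{n,n}_X(\star D)$ with nontrivial poles, so the decomposition is not achieved in one step; one would need an induction on pole order (the cross terms do have strictly lower order). The paper avoids this entirely: all sheaves involved are $C^{\infty}_X$-modules, hence acyclic, so the local (sheaf-level) surjectivity transfers to complexes of global sections via hypercohomology. Your own short exact sequence of complexes of sheaves, with its long exact sequence, is in fact the correct vehicle both for this transfer and for your (correct) identification of the kernel with the image of the connecting map induced by $\partial$ — but it must be run at the sheaf level, combined with acyclicity, rather than by chart-by-chart patching.
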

	
	\begin{proof}
		\begin{enumerate}[i).]
			\item
				For case $(\textbf{H})$,
		by using integration by parts  one can  reduce the order of pole and show that the inclusion 
							$(\mathcal{A}^{\bullet,q}_{X}(\log D),\partial )\hookrightarrow (\mathcal{A}^{\bullet,q}_{X}(\star D),\partial )$, with $0\leq q\leq n$,
actually gives a quasi-isomorphism, see e.g., \cite[Proposition 2.1]{Felder:2017} for a proof that incorporates the reduction of pole order in a spectral sequence argument.
The same argument therein can be adapted to prove the desired claim 	\eqref{eqnsmoothlogstarquasiisopartialnndegree}
in case   $(\textbf{NCD})$ and $\ref{caseC}$, with the 
reduction of pole order step proved in  \cite{Li:2020regularized}.
For case $(\textbf{NCD})$ the reduction can be performed as in \cite[Lemma 2.35]{Li:2020regularized},
 by choosing a system of local coordinates that contains the defining equations for the irreducible components of the divisor $D$. For case $\ref{caseC}$, the reduction uses 
the combinatorial structure of the  cohomology of the  configuration space of $\mathbb{C}$ that provides a coherent way of 
performing reduction of holomorphic pole order  \cite[Lemma 2.32, Lemma 2.33, Lemma 2.35, Lemma 2.36]{Li:2020regularized}.
To achieve this, the following Arnold type relations also play a key role (here $z_{ij}:=z_{i}-z_{j}$)
\begin{eqnarray*}
	{1\over z_{ij}}{1\over z_{jk}}
	+{1\over z_{jk}}{1\over z_{ki}}+	{1\over z_{ki}}{1\over z_{ij}}
	&=&0\,,\\
	{dz_{ij}\over z_{ij}}\wedge {dz_{jk}\over z_{jk}}
	+{dz_{jk}\over z_{jk}}\wedge {dz_{ki}\over z_{ki}}+	{dz_{ki}\over z_{ki}}\wedge {dz_{ij}\over z_{ij}}
	&=&0\,,~~ \text{for distinct values of}~i,j,k\,.
\end{eqnarray*}

		Observe that the sheaves involved in $(\mathcal{A}^{\bullet,n}_{X}(\log D),\partial )\hookrightarrow (\mathcal{A}^{\bullet,n}_{X}(\star D),\partial )
		$ are $C_{X}^{\infty}$-modules and thus acyclic, their hypercohomologies coincide with cohomologies of the corresponding complexes of global sections. Taking hypercohomology on $(\mathcal{A}^{\bullet,n}_{X}(\log D),\partial )\hookrightarrow (\mathcal{A}^{\bullet,n}_{X}(\star D),\partial )
		$ and using	\eqref{eqnsmoothlogstarquasiisopartialnndegree}, one proves
			  \eqref{eqnlogdecompositioncasepartial}.			
			\item 
			
			By Atiyah-Hodge/Grothendieck comparison one has
			\begin{equation}\label{eqnAtiyahHodge}
				(\mathcal{A}_{X}^{\bullet}(\star D),d)\xrightarrow{\mathrm{quasi-isomorphism}} (Rj_{*}\mathcal{A}^{\bullet}_{U},d)
				\xrightarrow{\mathrm{quasi-isomorphism}} Rj_{*}\mathbb{C}_{U}\,.
			\end{equation}
			
			On the other hand, based on Brieskorn-Orlik-Solomon theorem for case $\ref{caseHA}$,
			one has the logarithmic comparison asserting 
			\[
			(\mathcal{A}_{X}^{\bullet}(\log D),d)\xrightarrow{\mathrm{quasi-isomorphism}} (Rj_{*}\mathcal{A}^{\bullet}_{U},d)
			\]
			and its holomorphic version.
			See e.g., \cite[Theorem 3.13]{Dupont:2015} for a proof.
			Combining the above two,
one sees that $(\mathcal{A}^{\bullet}_{X}(\log D),d )\hookrightarrow (\mathcal{A}^{\bullet}_{X}(\star D),d)$ is  a quasi-isomorphism, proving   \eqref{eqnsmoothlogstarquasiisod}.
Taking hypercohomology and using the acyclicity of sheaves of $C_{X}^{\infty}$-modules, one obtains 
			\begin{equation*}\label{eqnAHBOS}
				H^{*}	(A^{\bullet}_{X}(\log D ),d )\cong 
				H^{*}	(A^{\bullet}_{X}(\star D ),d )
				\cong
				\mathbb{H}^{*}	(Rj_{*}A^{\bullet}_{U})
				\cong 
				H^{*}	(U)\,.
			\end{equation*}
			This  proves \eqref{eqnlogdecompositiond}.
		\end{enumerate}
	\end{proof}

	\begin{rem}\label{remreductionofpoleholomorphiccomplex}
		Note that in Proposition \ref{prophomotopy} (i), the reduction of pole method actually gives rise to a quasi-isomorphism between complexes of sheaves
		\begin{equation}\label{eqnhollogstarquasiiso}
			(\Omega^{\bullet}_{X}(\log D),\partial )\xrightarrow{\mathrm{quasi-isomorphism}} (\Omega^{\bullet}_{X}(\star D),\partial )\,.
		\end{equation}
		In particular, one has for the hypercohomologies $\mathbb{H}^{n}(\Omega^{\bullet}_{X}(\star D),\partial )\cong 
		\mathbb{H}^{n}(\Omega^{\bullet}_{X}(\log D),\partial )$.
		However, this does not yield the holomorphic version of \eqref{eqnlogdecompositioncasepartial}
		since in general the hypercohomologies are different from
		cohomologies of the complexes of \emph{global} differential forms.
	\end{rem}

	\subsection{Extension of regularized integrals}
	\label{secextension}

	We next extend the notion of regularized integrals defined for case $\ref{caseC}$.
We also relate it to  the principal value current introduced in \cite{Dolbeault:1970, Herrera:1971}.

\subsubsection{Extension}

We first generalize the definition of regularized integrals  given in
\eqref{eqnlogdecomposition} and \eqref{eqndfnregularizedintegral}.
		
		 \begin{dfn}\label{dfnregularizedintegralcaseD}
		 Define a linear map $\mathfrak{r}:A^{n,n}_{X}(\log D)+\partial A^{n-1,n}_{X}(\star D)\rightarrow \mathbb{C}$ as follows.		 		
	 			 		For any element $\omega\in A^{n,n}_{X}(\log D)+\partial A^{n-1,n}_{X}(\star D)$, writing it as a sum
	 			 		\begin{equation}\label{eqndecompositionforrmap}
	 			 		\omega=\alpha+ \partial \beta\,, \quad \quad 
	 			 		\alpha\in A^{n,n}_{X}(\log D)\,,~
	 			 		\beta\in A^{n-1,n}_{X}(\star D)\,.
	 			 		\end{equation}
	 			 		Then set
		 			\begin{equation}\label{eqndfnregularizedintegralcaseD}
		 			\mathfrak{r}( \omega):= \int_{X}\alpha\,.
		 		\end{equation}	 
	 				In particular, the map $\mathfrak{r}$ is defined on the full space $A^{n,n}_{X}(\star D)$ when  Proposition \ref{prophomotopy} (i) holds (which  covers  cases $\ref{caseC}, (\textbf{NCD}$)) since 
	 						\begin{equation}\label{eqnfullspace}
	 							\text{ Proposition \ref{prophomotopy} (i) holds.}~\Rightarrow~
	 				A^{n,n}_{X}(\star D)=A^{n,n}_{X}(\log D)+\partial A^{n-1,n}_{X}(\star D)\,.
	 				\end{equation}
	 		\end{dfn}

	 			\begin{lem}\label{lemindependenceondecompositionaseX}
		 	The map $\mathfrak{r}$ is independent of the choice of the decomposition $\omega=\alpha+\partial\beta$ in \eqref{eqndecompositionforrmap}.
		 	\end{lem}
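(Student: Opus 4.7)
The plan is to reduce the claim to establishing that $\int_X \partial \beta = 0$ for any $\beta \in A^{n-1,n}_X(\star D)$ whose $\partial$-derivative happens to lie in $A^{n,n}_X(\log D)$. Indeed, if $\omega = \alpha_i + \partial \beta_i$ for $i=1,2$ are two admissible decompositions, then $\alpha := \alpha_1 - \alpha_2 \in A^{n,n}_X(\log D)$ and $\beta := \beta_2 - \beta_1 \in A^{n-1,n}_X(\star D)$ satisfy $\alpha = \partial \beta$, so the desired equality $\int_X \alpha_1 = \int_X \alpha_2$ is equivalent to $\int_X \alpha = 0$.

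To establish this vanishing, I would first invoke the local absolute integrability of logarithmic forms (Lemma \ref{lemintegrabilityoflogpart}), which lets us express $\int_X \alpha$ as the principal-value limit $\lim_{\varepsilon \to 0} \int_{X \setminus B_\varepsilon(D)} \alpha$, where $B_\varepsilon(D)$ denotes an $\varepsilon$-tubular neighborhood of $D$. On the complement of $B_\varepsilon(D)$, both $\alpha$ and $\beta$ are smooth, and since $\beta$ is of type $(n-1,n)$ one automatically has $\bar{\partial}\beta = 0$, so $\partial \beta = d\beta$ there. Stokes' theorem then yields
\[
\int_{X \setminus B_\varepsilon(D)} \partial \beta \;=\; - \int_{\partial B_\varepsilon(D)} \beta\,,
\]
and in the $\varepsilon \to 0$ limit the right-hand side is, up to a factor of $\pii$, the holomorphic residue of $\beta$ along $D$ as in \eqref{eqnholomorphicresidue}. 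This residue vanishes by \eqref{eqnvanishingofholomorphicresiduebytypereasonscaseDol0} since $\beta$ has antiholomorphic degree $n \geq 1$, giving the desired conclusion. For case \ref{caseC} this recovers in a more direct form the independence argument already present in \cite{Li:2020regularized}.

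The main obstacle I anticipate is making the Stokes and residue steps rigorous when $D$ is not a single smooth hypersurface but a simple normal crossing divisor or a general smooth hypersurface arrangement, as $\partial B_\varepsilon(D)$ then develops corners along the higher-codimension strata $D_I = \bigcap_{a \in I} D_a$. One must either iterate the tubing argument while tracking contributions from each stratum, or shrink tubes around the components one at a time in a fixed order. The key observation that should still force the vanishing is that each successive residue acts on a form inheriting at least one antiholomorphic factor from $\beta$, so the type-based vanishing of \eqref{eqnvanishingofholomorphicresiduebytypereasonscaseDol0} applies at every stage; this can be made precise either by induction on the number of irreducible components of $D$ or by working in the local model of a smooth hypersurface arrangement at each stratum.
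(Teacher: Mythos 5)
Your proposal is correct and takes essentially the same route as the paper: reduce to the vanishing \eqref{eqnvanishingonpartialexacttermforhypersurfacearrangement} of $\int_{X}\partial\beta$ when $\partial\beta\in A^{n,n}_{X}(\log D)$, use the absolute integrability of logarithmic forms (Lemma \ref{lemintegrabilityoflogpart}) to express the integral as a limit over complements of tubes, apply Stokes via $\bar{\partial}\beta=0$, and kill the boundary terms by the type-reason vanishing \eqref{eqnvanishingofholomorphicresiduebytypereasonscaseDol0}. The corner issue you anticipate is resolved in the paper not by iterating residues stratum by stratum, but by excising a set $K_{\varepsilon}$ around the pairwise intersections of the tubes with $\mathrm{vol}(K_{\varepsilon})\rightarrow 0$, so that the remaining disk bundles are disjoint, the contribution of $K_{\varepsilon}$ vanishes by the integrability of $\partial\beta$, and only single (non-iterated) residues along the smooth components $D_{a}$ ever appear.
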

	 		\begin{proof}
	 			
	 			The absolute convergence 
	 			of $\int_{X}\alpha, \alpha\in A_{X}^{n,n}(\log D)$
	 			 follow from partition of unity and a local analysis using polar coordinates (see Lemma \ref{lemintegrabilityoflogpart} for a stronger version).
	 						
	 			To prove the independence on the decomposition, one needs to show that
	 		 			\begin{equation}\label{eqnvanishingonpartialexacttermforhypersurfacearrangement}
	 				\int_{X}\partial \beta=0\, ~\textrm{for}~\beta\in 
	 				A^{n-1,n}_{X}(\star D)\,,~\partial \beta\in A^{n,n}_{X}(\log D)\,.
	 			\end{equation}
	 		 				For each irreducible component $D_{a},a=1,2,\cdots, N$ of $D$, we take a sufficiently small $\varepsilon_{a}>0$ and let $B_{\varepsilon_{a}}(D_{a})$ be the disk bundle of radius $\varepsilon_{a}$ over the smooth hypersurface $D_{a}$.
	 			It is easy to see that one can construct a set $K_{\varepsilon}\subseteq \bigcup_{a,b}(B_{\varepsilon_{a}}(D_{a})\cap B_{\varepsilon_{b}}(D_{b}))$
	 		 			such that 
	 			$
	 			\bigcup_{a=1}^{N}B_{\varepsilon_{a}}(D_{a})-K_{\varepsilon}
	 			$ consists of a disjoint union of disk bundles
	 			$\bigsqcup_{a=1}^{N}B_{\varepsilon_{a}}^{'}(D_{a})$ with
	 			$B_{\varepsilon_{a}}^{'}(D_{a})\subseteq B_{\varepsilon_{a}}(D_{a})$.
	 		Then by the integrability of $\partial\beta$, the relation $\bar{\partial}\beta=0$ implied by type reasons, and Stokes theorem, one has
	 			\begin{eqnarray*}
	 			\int_{X}\partial\beta&=&
	 			\lim_{(\varepsilon_{1},\cdots,\varepsilon_{N})\rightarrow 0}\int_{X-\bigcup_{a}B_{\varepsilon_{a}}(D_{a})}\partial\beta\\
	 			&=&\lim_{(\varepsilon_{1},\cdots,\varepsilon_{N})\rightarrow 0}\left(\int_{K_{\varepsilon}}\partial\beta+
	 			\int_{X-\bigsqcup_{a}B_{\varepsilon_{a}}^{'}(D_{a})}
	 			\partial\beta\right)\\
	 				&=&\lim_{(\varepsilon_{1},\cdots,\varepsilon_{N})\rightarrow 0}\left(\int_{K_{\varepsilon}}\partial\beta+
	 			\int_{X-\bigsqcup_{a}B_{\varepsilon_{a}}^{'}(D_{a})}
	 		d\beta\right)\\
	 			&=&\lim_{(\varepsilon_{1},\cdots,\varepsilon_{N})\rightarrow 0}\int_{K_{\varepsilon}}\partial\beta-
	 		\sum_{a=1}^{N}\lim_{(\varepsilon_{1},\cdots,\varepsilon_{N})\rightarrow 0}\int_{\partial B_{\varepsilon_{a}}^{'}(D_{a})}
	 		\beta\,.
	 			\end{eqnarray*}
 			By the construction that $\lim_{(\varepsilon_1,\cdots,\varepsilon_{N})\rightarrow 0}\mathrm{vol}(K_{\varepsilon})=0$, the first term is zero.	 			
	 			Applying the same local calculation proving \eqref{eqnvanishingofholomorphicresiduebytypereasonscaseDol0}
	 			to each of the summands  in the second term, one then obtains
	 			the desired vanishing.	 			
	In particular,  this vanishing result is independent of the choice of the metric used to define the disk bundles $B_{\varepsilon_a}(D_a)$.
	 			\end{proof}
	 		
	 				With the independence on the decomposition \eqref{eqndecompositionforrmap} in Lemma \ref{lemindependenceondecompositionaseX} proved, the well-definedness of $\mathfrak{r}$ requires the following relation as  a consistency condition
	 		\begin{equation}\label{eqnconsistencyofdefinitionofregularizedintegral}
	 		\mathfrak{r}(\partial  \beta)=0\,  ~\textrm{for}~\beta\in 
	 		A^{n-1,n}_{X}(\star D)\,,~\partial \beta\in A^{n,n}_{X}(\star D)\,.
	 		\end{equation}

		 Part (i)  of Theorem \ref{thmformdefiningcurrent} also extends to the map $\mathfrak{r}$.
		 	
		 	\begin{prop}\label{propformdefiningcurrentcaseD}		
		 	 	Any $\omega\in A^{n,n}_{X}(\log D)+\partial A^{n-1,n}_{X}(\star D)$ defines a  current $T_{\omega}$:  for any test form $f\in A^{0,0}_{X}$ ,		 	
		 		\begin{equation}\label{eqnregularizedintegral=principalvaluecaseD}
		 			T_{\omega}(f)=\mathfrak{r}( \omega f)\,.
		 		\end{equation}	
		 	Furthermore, one has $T_{\partial \beta}(1)=0$ for $\beta\in A_{X}^{n-1,n}(\star D)$.
		 			\end{prop}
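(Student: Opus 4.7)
The plan is to define $T_{\omega}(f):=\mathfrak{r}(\omega f)$ for any test form $f\in A^{0,0}_{X}$, and then to verify: (i) the right-hand side makes sense; (ii) it depends continuously on $f$, hence yields an honest current; (iii) it is independent of the Hermitian metric entering the construction of $\mathfrak{r}$; and (iv) the vanishing $T_{\partial\beta}(1)=0$.

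For (i), I would write $\omega=\alpha+\partial\beta$ with $\alpha\in A^{n,n}_{X}(\log D)$ and $\beta\in A^{n-1,n}_{X}(\star D)$, and expand
\[
\omega f \;=\; f\alpha \;+\; \partial(f\beta) \;-\; \partial f\wedge \beta.
\]
The first summand lies in $A^{n,n}_{X}(\log D)$ and the second in $\partial A^{n-1,n}_{X}(\star D)$. The third summand lies a priori only in $A^{n,n}_{X}(\star D)$, but by \eqref{eqnfullspace} (valid in particular in cases \ref{caseC} and $(\textbf{NCD})$) it again decomposes as an element of $A^{n,n}_{X}(\log D)+\partial A^{n-1,n}_{X}(\star D)$. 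Thus $\omega f$ is in the domain of $\mathfrak{r}$, and Lemma \ref{lemindependenceondecompositionaseX} ensures $\mathfrak{r}(\omega f)$ is independent of the decomposition. Linearity in $f$ is then immediate.

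For (ii) and (iii), the expansion above combined with the consistency \eqref{eqnconsistencyofdefinitionofregularizedintegral} gives
\[
T_{\omega}(f) \;=\; \int_{X} f\alpha \;-\; \mathfrak{r}(\partial f\wedge \beta).
\]
The first term depends continuously on $f$ by dominated convergence, since $\alpha$ is absolutely integrable by Lemma \ref{lemintegrabilityoflogpart}. For the second term I would track through the pole-reduction algorithm underlying Proposition \ref{prophomotopy}(i): the decomposition of $\partial f\wedge\beta$ into a logarithmic plus a $\partial$-exact piece can be chosen $C^{\infty}_{X}$-linearly in $\partial f$, since the algorithm proceeds via iterated integration by parts, partial fractions, and Arnold-type relations, each step being linear in the smooth coefficients. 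Hence $C^{\infty}$ convergence $f_{k}\to f$ forces $L^{1}$ convergence of the logarithmic representative, yielding the required continuity. Metric independence in (iii) is inherited from that of $\mathfrak{r}$, since only the logarithmic piece is integrated and the residue vanishings appearing in Lemma \ref{lemindependenceondecompositionaseX} are metric-free.

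Finally, for (iv), setting $f=1$ and $\omega=\partial\beta$ gives $T_{\partial\beta}(1)=\mathfrak{r}(\partial\beta)$; writing $\partial\beta=\alpha'+\partial\beta'$ with $\alpha'\in A^{n,n}_{X}(\log D)$, the form $\alpha'=\partial(\beta-\beta')$ is then logarithmic and $\partial$-exact, so \eqref{eqnvanishingonpartialexacttermforhypersurfacearrangement} yields $\int_{X}\alpha'=0$. The main obstacle is the continuity step in (ii): one must verify that the reduction of pole in Proposition \ref{prophomotopy}(i) can be realized in a $C^{\infty}(X)$-linear manner, so that smooth variations of $f$ pass through to the logarithmic representative. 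I expect this to follow by induction on the pole order exactly as in the proof of Proposition \ref{prophomotopy}(i), without requiring any new analytic input.
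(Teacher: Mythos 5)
Your proof is correct and is essentially the paper's: both define $T_{\omega}(f):=\mathfrak{r}(\omega f)$, reduce well-definedness to Lemma \ref{lemindependenceondecompositionaseX}, and obtain $T_{\partial\beta}(1)=0$ from the consistency relation \eqref{eqnconsistencyofdefinitionofregularizedintegral}. Two comparative remarks. First, the point you verify by hand --- that $\omega f$ stays in the domain of $\mathfrak{r}$, via the Leibniz expansion $\omega f=f\alpha+\partial(f\beta)-\partial f\wedge\beta$ together with \eqref{eqnfullspace} for the cross term $\partial f\wedge\beta$ --- is exactly what the paper compresses into the one-line assertion that $A^{n,n}_{X}(\log D)+\partial A^{n-1,n}_{X}(\star D)$ is a $C^{\infty}_{X}$-module; your version has the merit of making visible that this rests on Proposition \ref{prophomotopy}(i), i.e.\ on cases \ref{caseC} and $(\textbf{NCD})$. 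Second, the step you single out as the main obstacle --- continuity of $f\mapsto\mathfrak{r}(\omega f)$ in the $C^{\infty}$ topology, for which you only sketch a $C^{\infty}_{X}$-linear pole-reduction argument --- is not needed at all: in this paper a current is by convention the plain \emph{linear} dual of the space of test forms (see the Notation and conventions), so linearity of $f\mapsto\mathfrak{r}(\omega f)$ already yields the current, which is precisely how the paper's proof concludes. Dropping your step (ii) (and the metric-independence step (iii), which is not part of this statement) leaves a complete and correct proof; the unresolved continuity sketch is superfluous rather than a gap.
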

	 		
	 		\begin{proof}
	Note that $A^{n,n}_{X}(\log D)+\partial A^{n-1,n}_{X}(\star D)$ is a $C^{\infty}_{X}$ module. Hence for 
any test function $f\in A^{0,0}_{X}$, one has $\omega f\in A_{X}^{n,n}(\star D)$.
From  Definition \ref{dfnregularizedintegralcaseD} and Lemma \ref{lemindependenceondecompositionaseX}, we see that
$\mathfrak{r}(\omega f)$ exists and is independent of choices.
It is then clear that the assignment $f\mapsto \mathfrak{r}(\omega f)$ gives rise to a linear functional on $A_{X}^{0,0}$
and thus defines a current $T_{\omega}$.		
		That $T_{\partial \beta}(1)=0$ follows  the consistency relation  \eqref{eqnconsistencyofdefinitionofregularizedintegral}
		of Definition \ref{dfnregularizedintegralcaseD}.

	\end{proof}

\subsubsection{Relation to principal value currents}

	We now discuss the relation between the current $T_{\omega}$ determined via Proposition \ref{propformdefiningcurrentcaseD} and the principal value current  defined by \cite{Dolbeault:1970, Herrera:1971}, thus generalizing part (ii)  of Theorem \ref{thmformdefiningcurrent}.
	
	Recall from \cite{Herrera:1971} that $\omega$ defines a principal value current  $\mathrm{P.V.}_{\omega}$ which by a partition of unity argument admits a local presentation 	
		\begin{equation}\label{eqnHLcurrent}
		\mathrm{P.V.}_{\omega}(\psi):=
	\lim_{\varepsilon\rightarrow 0}\int_{|s|\geq \varepsilon} \omega\cdot \psi\,
	\end{equation}
for a test form $\psi$, where $s$ is a local defining equation of $D$. 
It is shown in  \cite{Herrera:1971} that the result is
	independent of the choice of $s$, the metric $|\cdot|$, and the local presentations of $\omega$ that patch globally.
	Furthermore, one has
	\begin{equation}\label{eqnresiduecurrentvanishing}
	\mathrm{P.V.}_{\partial \beta}(1)=0\,,\quad \beta\in A_{X}^{n-1,1}(\star D)\,.
	\end{equation}
	These results are proved
		using Hironaka’s  resolution of singularities, which reduces to the case where $D$ is normal crossing
		and where the vanishing  \eqref{eqnresiduecurrentvanishing} essentially follows by type reasons as in \eqref{eqnvanishingofholomorphicresiduebytypereasonscaseDol0}.
		However, unlike \eqref{eqnvanishingonpartialexacttermforhypersurfacearrangement} in Lemma \ref{lemindependenceondecompositionaseX} where
		one has the additional integrability, here the establishing of  vanishing is more involved.
			\begin{rem}
	As a comparison, other currents such as the Coleff-Herrera  products	\cite{Coleff:1978} of the residue current 
	\cite{Herrera:1971} 
	\[
	(\bar{\partial}{1\over s}):\psi\mapsto \lim_{\varepsilon\rightarrow 0}\int_{|s|= \varepsilon} \psi\,,
	\]
	do depend on choices like the integration chain.  See also \cite{Passare:1988} for details.
	\end{rem}
		
		\begin{cor}\label{coruniquenessofregularization}
			Let $\mathfrak{r}:A^{n,n}_{X}(\log D)+\partial A^{n-1,n}_{X}(\star D)\rightarrow \mathbb{C}$ be the linear map defined as in Definition \ref{dfnregularizedintegralcaseD}.
			Then 
			\begin{equation}\label{eqnregularizedintegralisHLcurrent}
				\mathfrak{r}(\omega)=\mathrm{P.V.}_{\omega}(1)\,,\quad 
				\forall\, \omega\in  A^{n,n}_{X}(\log D)+\partial A^{n-1,n}_{X}(\star D)\,.
			\end{equation}
		In particular, when  Proposition \ref{prophomotopy} (i) holds
		which is the case for cases $\ref{caseC}, (\textbf{NCD})$, one has  	
		\begin{equation}\label{eqnregularizedintegralisHLcurrentfull}
	\mathfrak{r}(\omega)=\mathrm{P.V.}_{\omega}(1)\,,\quad 
	\forall\, \omega\in   A^{n,n}_{X}(\star D)\,.
\end{equation}
				 
		\end{cor}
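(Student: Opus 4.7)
The plan is to reduce the identity \eqref{eqnregularizedintegralisHLcurrent} to two building blocks, corresponding to the two summands in the defining decomposition of Definition \ref{dfnregularizedintegralcaseD}. Namely, given $\omega\in A^{n,n}_{X}(\log D)+\partial A^{n-1,n}_{X}(\star D)$, I would fix any decomposition $\omega=\alpha+\partial\beta$ with $\alpha\in A^{n,n}_{X}(\log D)$ and $\beta\in A^{n-1,n}_{X}(\star D)$, and then use the linearity of the principal value current $\mathrm{P.V.}_{(\cdot)}$ to write
\[
\mathrm{P.V.}_{\omega}(1)=\mathrm{P.V.}_{\alpha}(1)+\mathrm{P.V.}_{\partial\beta}(1).
\]
Since, by definition, $\mathfrak{r}(\omega)=\int_{X}\alpha$, the corollary will follow once I establish the two claims $\mathrm{P.V.}_{\alpha}(1)=\int_{X}\alpha$ and $\mathrm{P.V.}_{\partial\beta}(1)=0$.

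For the first claim, I would invoke the absolute integrability of $\alpha\in A^{n,n}_{X}(\log D)$ supplied by the strengthened version of the logarithmic integrability statement (cited in the text as Lemma \ref{lemintegrabilityoflogpart}, to be proved in the appendix). With $\alpha\in L^{1}(X)$, the defining sequence $\int_{|s|\geq\varepsilon}\alpha$ that appears in \eqref{eqnHLcurrent} converges to $\int_{X}\alpha$ by the dominated convergence theorem, after a partition-of-unity reduction to the local model where $|s|$ makes sense. This gives the desired identification on the logarithmic piece.

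For the second claim, I would quote \eqref{eqnresiduecurrentvanishing} directly: the principal value current of a $\partial$-exact form with coefficients in $A^{n-1,n}_{X}(\star D)$ evaluated at $1$ vanishes. As noted in the text, this is a result of \cite{Herrera:1971} established after a Hironaka resolution reduces to the normal crossing situation, in which the vanishing reflects the same type reason as \eqref{eqnvanishingofholomorphicresiduebytypereasonscaseDol0}. In principle one could also reprove it in the present setting by mimicking the Stokes-theorem/tubular-neighborhood computation used in Lemma \ref{lemindependenceondecompositionaseX}, the key simplification being that the $\bar{\partial}$-closedness of $\beta$ holds for type reasons, so that $\partial\beta=d\beta$ wherever $\beta$ is smooth and only boundary terms over $\partial B_{\varepsilon_{a}}(D_{a})$ survive, and these vanish by \eqref{eqnvanishingofholomorphicresiduebytypereasonscaseDol0}.

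The main obstacle is the first claim: the principal value current is defined through a specific truncation procedure using a local defining equation $s$ and an auxiliary metric, so equating its value at $1$ with the ordinary Lebesgue integral $\int_{X}\alpha$ is not literally tautological. I would address this by using a partition of unity subordinate to an open cover on which $D$ is cut out by coordinates, and then applying dominated convergence inside each chart with an integrable dominating function coming from the logarithmic estimates. Combining the two claims then gives \eqref{eqnregularizedintegralisHLcurrent}, and the assertion \eqref{eqnregularizedintegralisHLcurrentfull} in cases $\ref{caseC}$ and $(\textbf{NCD})$ follows immediately from \eqref{eqnfullspace}.
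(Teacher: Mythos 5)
Your proposal is correct and follows essentially the same route as the paper's proof: identify $\mathrm{P.V.}_{\alpha}(1)$ with $\int_{X}\alpha$ via the absolute integrability of logarithmic forms (Lemma \ref{lemintegrabilityoflogpart}), kill the $\partial$-exact piece by the Herrera--Lieberman vanishing \eqref{eqnresiduecurrentvanishing}, combine using linearity and decomposition-independence (Lemma \ref{lemindependenceondecompositionaseX}), and deduce the second statement from \eqref{eqnfullspace}. The extra detail you supply (dominated convergence after a partition-of-unity reduction) is exactly what the paper leaves implicit, so there is no gap.
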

		\begin{proof}
			
		By the integrability of elements in $A_{X}^{n,n}(\log D)$ and 
		\eqref{eqnHLcurrent} we see that
		$\mathrm{P.V.}_{\omega}(1)$
		agrees with $\mathfrak{r}(\omega)=\int_{X}\omega$ for $\omega\in A_{X}^{n,n}(\log D)$.
		Furthermore, similar to the vanishing of $\mathfrak{r}$ in \eqref{eqnconsistencyofdefinitionofregularizedintegral}, it also vanishes on $ \partial\beta,\beta\in A_{X}^{n-1,n}(\star D)$ by \eqref{eqnresiduecurrentvanishing} (although for a different reason).
		That they agree then follows from the independence of their definitions on the decomposition of $\omega$, as shown in Lemma \ref{lemindependenceondecompositionaseX}.
						The second part of the claim follows from the relation
  \eqref{eqnfullspace} ensured by Proposition \ref{prophomotopy} (i).
		\end{proof}
	
\subsection{Trace map on current cohomology}
\label{sectracemaponcurrentcohomology}	

Proposition \ref{propformdefiningcurrentcaseD} gives a map
\begin{equation}\label{eqnPVintegrablecurrent}
	T:  A^{n,n}_{X}(\log D)+\partial A^{n-1,n}_{X}(\star D) \rightarrow {D}'^{n,n}\,,\quad \omega\mapsto T_{\omega}(\cdot)=\mathfrak{r}(\omega\cdot)\,.
\end{equation}
We now study properties of  this map more closely.
By degree reasons, one has
\[
\partial T_{\omega}=\bar{\partial}T_{\omega}=dT_{\omega}=0\,.
\]
Thus $T_{\omega}$ gives an element $[T_{\omega}]$ in the current cohomology
$ 
H^{n}({D}_{X}^{'\bullet,n},\partial)$.

\begin{dfn}\label{dfnontracemaponcurrentcohomology}
	Define  the following trace map on current cohomology
	\begin{equation}
		\mathrm{Tr}: H^{n}(D_{X}^{'\bullet,n}, \partial)\rightarrow \mathbb{C}\,,\quad 
		T\mapsto T(1)\,.
	\end{equation}
By the definition of $\partial t, t\in C^{2n-1}(D_{X}^{'\bullet,n},\partial)$, one has 
\begin{equation}\label{eqnvanishingconditiononregularizationscheme}
\mathrm{Tr}\,(\partial t)=(\partial t)(1)=(-1)^{2n}(\partial 1)=0\,.
\end{equation} 
Thus the map $\mathrm{Tr}$ is well-defined.
\end{dfn}

\begin{thm}\label{thmresidueformulaforcurrents}
	Let $\mathfrak{r}:A^{n,n}_{X}(\log D)+\partial A^{n-1,n}_{X}(\star D)\rightarrow \mathbb{C}$ be the linear map defined as in Definition \ref{dfnregularizedintegralcaseD} and $T$ be the map determined by $\mathfrak{r}$ via Proposition \ref{propformdefiningcurrentcaseD}.
	
Then 
	for any $\omega\in A^{n,n}_{X}(\log D)+\partial A^{n-1,n}_{X}(\star D)$, there exists a 
	smooth form $\omega_{\mathrm{Dol}}\in  A_{X}^{n,n}$ such that 
	\begin{equation}\label{eqnresidueintegralforcurrents}
		\mathfrak{r}(\omega)=\int_{X}\omega_{\mathrm{Dol}}=\langle [X], [\omega_{\mathrm{Dol}}]
		\rangle \,.
	\end{equation}
	Here $[X]$ is the fundamental class of $X$, and $[\omega_{\mathrm{Dol}}]\in H^{n}(D_{X}^{\bullet,n},\partial)\cong H^{2n}(X)$ is the cohomology class of the smooth form $\omega_{\mathrm{Dol}}$ that only depends on $\omega$.
	
			In particular, when  Proposition \ref{prophomotopy} (i) holds
	which is the case for cases $\ref{caseC}, (\textbf{NCD})$, the relation \eqref{eqnresidueintegralforcurrents} holds for any $\omega\in  A^{n,n}_{X}(\star D)$.

\end{thm}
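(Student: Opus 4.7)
The plan is to replace the current $T_\omega$ attached to $\omega$ by a cohomologous smooth $(n,n)$-form, and then exploit the vanishing relation \eqref{eqnvanishingconditiononregularizationscheme} on coboundaries to identify the regularized integral with a genuine integral of a smooth top form, hence with a cohomological pairing against the fundamental class.

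I would first collect the currents input. By Proposition \ref{propformdefiningcurrentcaseD}, $\omega$ defines a current $T_\omega \in D_X'^{n,n}$ via $T_\omega(f) = \mathfrak{r}(\omega f)$. Since $X$ has complex dimension $n$, the target $D_X'^{n+1,n}$ is trivial, so $\partial T_\omega = 0$ automatically (and likewise $\bar\partial T_\omega = dT_\omega = 0$), and $T_\omega$ represents a class $[T_\omega] \in H^n(D_X'^{\bullet,n}, \partial)$.

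The technical heart of the argument is the smoothing step: the inclusion of smooth forms into currents
\[
(A_X^{\bullet,n}, \partial) \hookrightarrow (D_X'^{\bullet,n}, \partial)
\]
is a quasi-isomorphism. Both complexes are built from fine sheaves, and by the $\partial$-Poincar\'e lemma (the complex conjugate of the Dolbeault lemma for $\bar\partial$) and its currentized version, both resolve the sheaf $\overline{\Omega}_X^n$ of conjugate holomorphic $n$-forms. Passing to global sections yields
\[
H^n(A_X^{\bullet,n}, \partial) \xrightarrow{\sim} H^n(D_X'^{\bullet,n}, \partial) \cong H^n(X, \overline{\Omega}_X^n) \cong H^{2n}(X, \mathbb{C}),
\]
the last identification using that in degree $2n$ the Hodge type on a compact complex $n$-manifold is forced to be $(n,n)$. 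By surjectivity of the first arrow, one extracts a smooth form $\omega_{\mathrm{Dol}} \in A_X^{n,n}$ together with a current $t \in D_X'^{n-1,n}$ such that $T_\omega = T_{\omega_{\mathrm{Dol}}} + \partial t$ as currents; the class $[\omega_{\mathrm{Dol}}]$ is the unique preimage of $[T_\omega]$ and therefore depends only on $\omega$.

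Finally I evaluate both sides of $T_\omega = T_{\omega_{\mathrm{Dol}}} + \partial t$ on the constant test function $1 \in A_X^{0,0}$. Combining \eqref{eqnregularizedintegral=principalvaluecaseD}, the fact that a smooth top-degree form integrates as an ordinary integral, and the vanishing \eqref{eqnvanishingconditiononregularizationscheme} applied to $\partial t$, one obtains
\[
\mathfrak{r}(\omega) = T_\omega(1) = \int_X \omega_{\mathrm{Dol}} + (\partial t)(1) = \int_X \omega_{\mathrm{Dol}} = \langle [X], [\omega_{\mathrm{Dol}}] \rangle,
\]
which is the claimed formula. The extension to all of $A_X^{n,n}(\star D)$ in cases \ref{caseC} and $(\textbf{NCD})$ is then immediate from \eqref{eqnfullspace}. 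The main obstacle will be the smoothing quasi-isomorphism itself: currents with the $\partial$-differential acting on a fixed anti-holomorphic degree slice are less standard than the usual $\bar\partial$-Dolbeault setting, so one must verify the $\partial$-Poincar\'e lemma for currents and carefully identify the kernel sheaf in both the smooth and current resolutions.
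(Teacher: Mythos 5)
Your proposal is correct and follows essentially the same route as the paper: both pass from $T_\omega$ to its class in $H^n(D_X'^{\bullet,n},\partial)$, use the fact that smooth $( \cdot ,n)$-forms and currents are both fine resolutions of $\overline{\Omega}_X^n$ to find a smooth representative $\omega_{\mathrm{Dol}}$, and then evaluate on the constant test function $1$, killing the $\partial$-exact discrepancy via \eqref{eqnvanishingconditiononregularizationscheme}. The paper packages this last step as the trace map of Definition \ref{dfnontracemaponcurrentcohomology}, but that is exactly your relation $T_\omega(1)=T_{\omega_{\mathrm{Dol}}}(1)+(\partial t)(1)=\int_X\omega_{\mathrm{Dol}}$.
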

\begin{proof}
	By Definition
	\ref{dfnontracemaponcurrentcohomology}, one has 
	\begin{equation}\label{eqnregularizedintegral=trace}
	\mathfrak{r}(\omega)=T_{\omega}(1)=\mathrm{Tr}\,([T_{\omega}])\,.
		\end{equation}
	From the conjugate Dolbeault resolution of $\overline{ \Omega}^{n}_{X}$ by the fine sheaves of smooth forms and the fine sheaves of currents, one has 
	\[
	\overline{ \Omega}^{n}_{X}
	\xrightarrow{\mathrm{quasi-isomorphism}} (\mathcal{D}_{X}^{\bullet,n}, \partial)\xrightarrow{\mathrm{quasi-isomorphism}}  (\mathcal{D}_{X}^{'\bullet,n}, \partial)\,.
	\]
	It follows that
\begin{equation}\label{eqnHnisomorphismofcurrentandsmoothforms}
	H^{n}(D_{X}^{'\bullet,n}, \partial)\cong \mathbb{H}^{n}(\mathcal{D}_{X}^{'\bullet,n}, \partial)
	\cong  \mathbb{H}^{n}(\mathcal{D}_{X}^{\bullet,n}, \partial)
	\cong 	H^{n}(D_{X}^{\bullet,n}, \partial)
	\,.
	\end{equation}
Pick any smooth form $\omega_{\mathrm{Dol}}\in C^{n}(D_{X}^{\bullet,n}, \partial)$ representing 
	the class $[T_{\omega}]\in H^{n}(D_{X}^{'\bullet,n}, \partial)$ under the above identification. The resulting integral current $T_{\omega_{\mathrm{Dol}}}$ then satisfies 
	\[
	\mathrm{Tr}\,([T_{\omega}])
		=\mathrm{Tr}\,([T_{\omega_{\mathrm{Dol}}}])=T_{\omega_{\mathrm{Dol}}}(1)
		=
		\int_{X}\omega_{\mathrm{Dol}}
		\,.
	\]
		By type reasons one has $d\omega_{\mathrm{Dol}}=0$. By Stokes theorem, the integral above descends to the claimed pairing between homology and cohomology.
\end{proof}

 \subsection{Representatives of current cohomology classes in terms of smooth forms}	\label{secrepresentativeofcurrentcohomologysintermsofsmooth}

 A  practical way of obtaining a representative $\omega_{\mathrm{Dol}}$  of the 
 cohomology class $[\omega_{\mathrm{Dol}}]\in H^{n}(D_{X}^{\bullet,n}, \partial)$, 
 without using partition of unity,
 can be obtained as follows.

\begin{prop}\label{propconstuctingclassusingDolCech}
Let the notation be as above.
	The class  $[\omega_{\mathrm{Dol}}]\in H^{n}(D_{X}^{\bullet,n}, \partial)$ in Theorem \ref{thmresidueformulaforcurrents}
	can be concretely represented by a smooth form   $
	\omega_{\mathrm{Dol}}\in  A_{X}^{n,n}$ using the conjugate Dolbeault-Cech double complex.
	
\end{prop}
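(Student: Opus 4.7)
The plan is to realize the identification \eqref{eqnHnisomorphismofcurrentandsmoothforms} between current and smooth-form $\partial$-cohomology through an explicit zig-zag in the conjugate Dolbeault-Cech double complex. Concretely, I would fix a Stein open cover $\mathcal{U}=\{U_\alpha\}$ of $X$ and consider the two double complexes
\begin{equation*}
K^{p,q}=C^p(\mathcal{U},\mathcal{A}_X^{q,n})\,,\qquad K'^{p,q}=C^p(\mathcal{U},\mathcal{D}_X^{'q,n})\,,
\end{equation*}
with Cech differential $\delta$ and conjugate Dolbeault differential $\partial$. Each column of either complex is a fine resolution of $\overline{\Omega}_X^n$, so the two total complexes both compute $\check{H}^n(\mathcal{U},\overline{\Omega}_X^n)\cong H^n(X,\overline{\Omega}_X^n)$, and the inclusion $K\hookrightarrow K'$ realizes the isomorphism \eqref{eqnHnisomorphismofcurrentandsmoothforms} on $H^n$.

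Starting from $T_\omega\in C^0(\mathcal{U},\mathcal{D}_X^{'n,n})$, which is $\partial$-closed by degree reasons, I would run the standard zig-zag: on each Stein open $U_\alpha$, the $\partial$-Poincare lemma for currents yields $\tau_\alpha^{(0)}\in\mathcal{D}_X^{'n-1,n}(U_\alpha)$ with $\partial\tau_\alpha^{(0)}=T_\omega|_{U_\alpha}$; then $\delta\tau^{(0)}\in K'^{1,n-1}$ is $\partial$-closed, so one iteratively solves $\partial\tau^{(k)}=\delta\tau^{(k-1)}$ on $(k+1)$-fold Stein intersections. After $n$ iterations this yields a Cech $n$-cocycle $c\in C^n(\mathcal{U},\overline{\Omega}_X^n)$ representing $[T_\omega]$ under $H^n(D_X^{'\bullet,n},\partial)\cong\check{H}^n(\mathcal{U},\overline{\Omega}_X^n)$.

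To descend to a smooth global form, I would then run the reverse zig-zag entirely within $K$, using the Cech-acyclicity of the fine smooth sheaves $\mathcal{A}_X^{q,n}$. First find $\sigma^{(n-1)}\in C^{n-1}(\mathcal{U},\mathcal{A}_X^{0,n})$ with $\delta\sigma^{(n-1)}=c$; next $\sigma^{(n-2)}\in C^{n-2}(\mathcal{U},\mathcal{A}_X^{1,n})$ with $\delta\sigma^{(n-2)}=\pm\partial\sigma^{(n-1)}$, whose $\delta$-closedness follows from $\partial c=0$; and so on, until $\sigma^{(0)}\in C^0(\mathcal{U},\mathcal{A}_X^{n-1,n})$. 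Setting $\omega_{\mathrm{Dol}}:=\pm\partial\sigma^{(0)}\in A_X^{n,n}$ produces a $\partial$-closed global smooth form. A standard total-complex argument identifies $[\omega_{\mathrm{Dol}}]$ and $[T_\omega]$ with the same Cech class $[c]$, hence with each other under \eqref{eqnHnisomorphismofcurrentandsmoothforms}; Theorem \ref{thmresidueformulaforcurrents} then finishes the proof. The main technical step is the first half of the zig-zag, where one must invert $\partial$ for currents on Stein opens via the $\partial$-Poincare lemma for currents; the second half is classical Cech-Dolbeault manipulation of smooth forms, and the whole construction proceeds algebraically within the double complex, without any explicit global partition of unity on $X$.
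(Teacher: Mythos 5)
Your proposal is correct and takes essentially the same route as the paper: a good cover by Stein open sets, a forward zig-zag in the current conjugate Dolbeault-Cech double complex ending in a Cech $n$-cocycle valued in $\overline{\Omega}_{X}^{n}$, then a backward zig-zag in the smooth conjugate Dolbeault-Cech double complex producing the global form $\omega_{\mathrm{Dol}}\in A_{X}^{n,n}$. The paper states this more tersely, but the steps you spell out (local $\partial$-solvability for currents on Stein opens, Cech-acyclicity of the fine sheaves, and the total-complex comparison of the two classes) are exactly what its diagram chase invokes.
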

\begin{proof}
 
 We take a good cover $\mathcal{U}$ of $X$ consisting of Stein open sets.
 Using the current conjugate Dolbeault-Cech double complex, 
 starting from $T_{\omega}\in D_{X}^{'n,n}= C^{0}(\mathcal{D}_{X}^{'n,n})$
 one obtains a representative in $\check{C}^{n}(X, \mathcal{D}_{X}^{'0,n})$ of the class 
  $[T_{\omega}]\in \check{H}^{n}(X, \mathcal{D}_{X}^{'0,n})$ that is the image from
 $\check{C}^{n}(X,\overline{ \Omega}^{n}_{X})$.
 Replacing the above current conjugate Dolbeault-Cech double complex 
 by conjugate Dolbeault-Cech double complex and run the above diagram chasing backwards, one then obtains a
 desired element in $\check{C}^{0}(\mathcal{D}_{X}^{n,n})=A_{X}^{n,n}$ representing the corresponding cohomology class
 in $H^{n}(D_{X}^{\bullet,n},\partial)$.
 
 \end{proof}

We now explain how this works for the $n=1$ case  in case $\ref{caseC}$.
The study on other cases  is only notationally different.
For a good cover $\mathcal{U}$ of $X=C$,
 we can take $\mathcal{U}=\{U,V\}$, where $U=X-D$ is Stein and 
$V$ is a small analytic neighborhood of $D$.
We can also work with the universal cover or other cover of $X$, whichever is more convenient.
The corresponding double complex is displayed in 
	Figure 	\ref{figure:figconjDolCechdoublecomplexcurrent} below.
	
	\begin{figure}[h]
		\centering
		\[
		\xymatrix{
	&	\check{C}^{1}(\overline{\Omega}_{X}^{1})\ar@{.>}[r]^{\hookrightarrow}	& \check{C}^{1}(\mathcal{D}_{X}^{'0,1})\ar[rr]^{\partial}&&  \check{C}^{1}(\mathcal{D}_{X}^{'1,1})&&\\
			&&  && &&\\
			&& \check{C}^{0}(\mathcal{D}_{X}^{'0,1})\ar[rr]^{\partial}\ar[uu]^{\delta}&& 
			\check{C}^{0}(\mathcal{D}_{X}^{'1,1})\ar[uu]^{\delta} &&\\
								&&  &&D_{X}^{'1,1}\ar@{.>}[u]_{\hookrightarrow}  &&			
		}
		\]	
		\caption{Double complex computing $\check{H}^{1}(X,\overline{\Omega}^{1}_{X})$ when $X=C$.}
		\label{figure:figconjDolCechdoublecomplexcurrent}
	\end{figure}	
For the current $T_{\omega}$ determined from $\omega\in A_{X}^{1,1}(\star D)$  via 
Theorem \ref{thmformdefiningcurrent}, it is injected into the element $(U_{i}, T_{i})\in \check{C}^{0}(\mathcal{D}_{X}^{'1,1})$.
On each Stein open $U_{i}$ in $\mathcal{U}$ one takes the $\partial$-primitive $(U_{i},S_{i})$ (this is possible by the fact that 
 $(\mathcal{D}_{X}^{'\bullet,n}, \partial)$ is a resolution).
 Then the element $\delta (U_{i},S_{i})=(U_{ij}, -S_{i}+S_{j})\in \check{C}^{1}(\mathcal{D}_{X}^{'0,1})$ is the image of 
 a cocycle in $\check{C}^{1}(\overline{\Omega}^{1}_{X})$ whose  class belongs to
 $ \check{H}^{1}(X,\overline{\Omega}^{1}_{X})$.
As explained in Proposition \ref{propconstuctingclassusingDolCech},
running the diagram chasing in the conjugate Dolbeault-Cech double complex, one obtains a cocycle
in $\check{C}^{0}(\mathcal{D}_{X}^{1,1})$ that is injected from $\Gamma(X, \mathcal{D}_{X}^{1,1})=D_{X}^{1,1}=A_{X}^{1,1}$.

\begin{rem}\label{improvedalgorithm}
The above procedure can sometimes be improved as follows. Taking a singular differential form as the $\partial$-primitive, say $\rho_{i}$, of $\omega$ in $U_{i}$.
The local analysis  in  Lemma \ref{lemeliminatinglog}  shows that $T_{\rho_i}$ exists as an element in $\mathcal{D}_{X}^{'0,1}(U_{i})$.
Note that in general it is not possible to arrange such that $\rho_i\in A^{n-1,n}_{X}(\star D)|_{U_{i}}$ as already illustrated in Example \ref{exwpn=1definitioncomputation}.
From \eqref{eqnpartialcurrent} we 
have $T_{\omega}|_{U_{i}}=T_{\partial \rho_{i}}=\partial T_{\rho_{i}}$
and thus 
 the  class 
 $[T_{\omega_{\mathrm{Dol}}}]$
 is represented by $(U_{ij}, -T_{\rho_{i}}+T_{\rho_{j}})\in \check{C}^{1}(\mathcal{D}_{X}^{'0,1})$.
 In nicest cases (e.g., when working with a universal cover of $X$), one has 
 \[
 -\rho_{i}+\rho_{j}=-\sigma_{i}+\sigma_{j}\quad\textrm{for some}~ (U_{i},\sigma_{i})\in \check{C}^{0}(\mathcal{D}_{X}^{0,1})\,,
 \quad\textrm{such that}~(U_{i},\partial\sigma_{i})\in \check{C}^{0}(\mathcal{D}_{X}^{0,1})\cap \mathrm{ker}\,\delta\,.
 \]
Since 
 $(U_{ij}, -\sigma_{i}+\sigma_{j})\in \check{C}^{1}(\mathcal{D}_{X}^{0,1})$ represents  the  class 
 $[T_{\omega_{\mathrm{Dol}}}]$, a diagram chasing 
 then gives a smooth form
$
 (U_{i},\partial \sigma_{i})\in \check{C}^{0}(\mathcal{D}_{X}^{1,1})
\cong \Gamma(X, \mathcal{D}_{X}^{1,1})=A_{X}^{1,1}$
 representing the class $[\omega_{\mathrm{Dol}}]\in  H^{1}(D_{X}^{\bullet,1}, \partial)$.
\end{rem}

\subsection{Proof of Theorem \ref{thmgeneralizedregularizationmapAintro}}

The proof of our first main result Theorem \ref{thmgeneralizedregularizationmapAintro} 
is given as follows.
\begin{proof}[Proof of Theorem \ref{thmgeneralizedregularizationmapAintro}]
	This follows by combining Theorem \ref{thmresidueformulaforcurrents} and 
	Proposition \ref{propconstuctingclassusingDolCech}.
\end{proof}

\begin{ex}\label{exfindingsmoothformrepresentativeofcurrentcohomology}
	Consider as in Example \ref{exwpn=1definitioncomputation}, Example \ref{exwpn=1residuecomputation} the case with $X=E, D=0$.
	
	Let $\omega=\widehat{Z}dz\wedge \psi$,  $\psi={d\bar{z}\over \bar{\tau}-\tau}$.
		To find a desired form $\omega_{\mathrm{Dol}}$ we follow the discussions in 
	Remark \ref{improvedalgorithm}.	
	Take $\mathcal{U}$ to be a cover by parallelograms $U_{i}$ on the universal cover $\mathbb{C}$,
	with the transition function being translations by certain lattice points. 
	Then one has for any $U_{i}$
	\[
	T_{\omega}=\partial T_{\rho}=T_{\partial \rho}\,,\quad 
	\rho=\ln (\bar{\theta}\widehat{\theta})\cdot  \psi\,,
	\]
	and thus $ (U_{ij}, -\rho_{i}+\rho_{j})=\delta (U_{i},\rho_{i})=0\in  \check{H}^{1}(X,\overline{\Omega}^{1}_{X})$. 
	
	Consider also $\omega=\wp(z)dz\wedge \psi$.
	Then again following the discussions in 
	Remark \ref{improvedalgorithm}, one has
	\[
	\rho_{i}=-\partial\widehat{Z}-\widehat{\eta}_{1} z_{i}\wedge {d\bar{z}_{i}\over \bar{\tau}-\tau}\,.
	\]
	The corresponding class in $\check{H}^{1}(X,\mathcal{D}_{X}^{'0,1})$ is given by
	$(U_{ij}, (z_{i}-z_{j})\widehat{\eta}_{1} \wedge {d\bar{z}_{i}\over \bar{\tau}-\tau})$.
	The resulting class  
	in  $H^{1}(D_{X}^{\bullet,1},\partial)$ is then represented by 
	\[
	(U_{i}, -\widehat{\eta}_{1} dz_{i}\wedge {d\bar{z}_{i}\over \bar{\tau}-\tau})\in  \check{C}^{0}(\mathcal{D}_{X}^{1,1})\,.
	\]
	This is the injection of the  smooth form
	$ -\widehat{\eta}_{1} dz\wedge {d\bar{z}\over \bar{\tau}-\tau}\in 
	\Gamma(X, \mathcal{D}_{X}^{1,1})= A_{X}^{1,1}$.
	\xxqed
\end{ex}

		\section{Regularized integrals of factorizable differential forms}
		\label{secfactorizableforms}

		In this section we prove  Theorem \ref {thmgeneralizedregularizationmapBintro}  by providing a mixed-Hodge theoretic formulation of the 
		regularized integration maps.
	
	Throughout this section, we work with the following map $\mathfrak{s}$
	that extends $\dashint_{X}$.

	\begin{dfn}\label{dfnsmap}
		Let $\mathfrak{F}$ be the space of factorizable forms defined as in \eqref{eqndfnfactorizableforms}.
Let $\mathfrak{s}: \mathfrak{F}+\partial A^{n-1,n}_{X}(\star D)\rightarrow \mathbb{C}$ be a linear map satisfying
	\begin{equation}\label{eqnassumptionsons}
		\mathfrak{s}(\alpha)=\int_{X}\alpha ~ ~\textrm{for}~~ \alpha\in A^{n,n}_{X}(\log D)\cap (\mathfrak{F}+\partial A^{n-1,n}_{X}(\star D))\,,\quad 
		\mathfrak{s}(\partial\beta)=0~~ \textrm{for}~~ \beta\in A^{n-1,n}_{X}(\star D)\,.
	\end{equation}
	\end{dfn}
			We first
			make the following simple observation.

			\begin{lem}\label{lemregularizedintegralforfactorizableformsincaseHA}
Let the notation be as above.
Then
	for any factorizable form $\omega=\phi\wedge\psi\in \mathfrak{F}$, $\mathfrak{s}(\omega)$
	 depends on $\phi$ through its de Rham cohomology class $[\phi]_{\mathrm{dR}}\in  H^{n}(A_{X}^{\bullet}(\star D),d)$.
		\end{lem}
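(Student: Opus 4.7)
The plan is to reduce the question to showing that the correction term produced by a $d$-exact change of $\phi$ lies in $\partial A^{n-1,n}_X(\star D)$, where $\mathfrak{s}$ vanishes by Definition \ref{dfnsmap}. Concretely, suppose $\phi_1,\phi_2\in A^{n,0}_X(\star D)\cap\ker\bar\partial$ represent the same class in $H^n(A_X^\bullet(\star D),d)$. Then $\phi_1-\phi_2=d\gamma$ for some $\gamma\in A^{n-1}_X(\star D)$, and the goal becomes
\begin{equation}
\mathfrak{s}(d\gamma\wedge\psi)=0.
\end{equation}

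The main step is a bidegree analysis of $d\gamma\wedge\psi$. I decompose $\gamma=\sum_{p+q=n-1}\gamma^{p,q}$ by Hodge type. Since $\psi$ has type $(0,n)$ and $\dim_{\mathbb{C}} X=n$, wedging any component of $d\gamma$ of type $(p,q)$ with $q\geq 1$ against $\psi$ yields a form of bidegree $(p,q+n)$, which vanishes identically. Only the $(n,0)$-component $\partial\gamma^{n-1,0}$ of $d\gamma$ survives, so
\begin{equation}
d\gamma\wedge\psi=\partial\gamma^{n-1,0}\wedge\psi.
\end{equation}
Using $\partial\psi=0$ (guaranteed by $\psi\in A^{0,n}_X\cap\ker\partial$) together with the Leibniz rule, this rewrites as $d\gamma\wedge\psi=\partial(\gamma^{n-1,0}\wedge\psi)$, and clearly $\gamma^{n-1,0}\wedge\psi\in A^{n-1,n}_X(\star D)$.

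The claim then follows by invoking the vanishing property $\mathfrak{s}(\partial\beta)=0$ for $\beta\in A^{n-1,n}_X(\star D)$ from \eqref{eqnassumptionsons}. By linearity of $\mathfrak{s}$,
\begin{equation}
\mathfrak{s}(\phi_1\wedge\psi)-\mathfrak{s}(\phi_2\wedge\psi)=\mathfrak{s}\big(\partial(\gamma^{n-1,0}\wedge\psi)\big)=0,
\end{equation}
which is the desired invariance. I do not expect any serious obstacle in executing this plan; the whole argument is essentially a type-counting observation, and the only potentially subtle point is verifying that the surviving $(n,0)$-component indeed has the form $\partial\gamma^{n-1,0}$, which is immediate from $\partial\gamma^{p,q}\in A^{p+1,q}_X(\star D)$ and $\bar\partial\gamma^{p,q}\in A^{p,q+1}_X(\star D)$. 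Notably, the proof uses only the second half of \eqref{eqnassumptionsons}; the normalization $\mathfrak{s}(\alpha)=\int_X\alpha$ on $A^{n,n}_X(\log D)\cap(\mathfrak{F}+\partial A^{n-1,n}_X(\star D))$ plays no role here.
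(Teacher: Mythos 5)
Your proposal is correct and follows essentially the same route as the paper: both arguments reduce the change $\phi\mapsto\phi+d\gamma$ to the identity $d\gamma\wedge\psi=\partial(\gamma\wedge\psi)$ via type reasons (the components of $d\gamma$ with antiholomorphic degree $\geq 1$ die against the $(0,n)$-form $\psi$, and $\partial\psi=0$), and then invoke the vanishing of $\mathfrak{s}$ on $\partial A^{n-1,n}_{X}(\star D)$ from \eqref{eqnassumptionsons}. Your explicit Hodge-type decomposition of $\gamma$ just spells out what the paper compresses into ``by type reasons,'' so there is no substantive difference.
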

\begin{proof}
	By \eqref{eqnvanishingonpartialexacttermforhypersurfacearrangement}, the relations
	\eqref{eqnassumptionsons} themselves are compatible.
Now take $\omega=\phi\wedge \psi\in\mathfrak{F}$, with
 $\phi\in 
A^{n,0}_{X}(\star D )\cap \mathrm{ker}\,\bar{\partial}, \psi\in
			A^{0,n}_{X}\cap \mathrm{ker}\,\partial$.
			By degree reasons, $\phi$ determines a cohomology class $[\phi]_{\mathrm{dR}}$ in $H^{n}(A_{X}^{\bullet}(\star D),d)$.	
			Consider  the action $\phi\mapsto \phi+d \gamma,\gamma\in A_{X}^{n-1}(\star D)$.
			By type reasons one has $\bar{\partial}\gamma\wedge \psi=0$.
			Thus the above action gives the change 
			\[
			\phi\wedge \psi\mapsto \phi\wedge \psi+\partial(\gamma\wedge \psi)\,.
			\]
		From
	$		dA^{n-1}_{X}(\star D)\wedge \psi \subseteq \partial A^{2n-1}_{X}(\star D)
		$
		and	\eqref{eqnassumptionsons}, we see that   $\mathfrak{s}$ depends on $\phi$ through its  cohomology class $[\phi]_{\mathrm{dR}}\in H^{n}(A_{X}^{\bullet}(\star D),d)$.
		\end{proof}

		Suppose Proposition \ref{prophomotopy} (ii) holds, then one can find a representative $\Phi\in A^{n}_{X}(\log D)$ of the corresponding cohomology class $[\phi]_{\mathrm{dR}}\in H^{n}(A_{X}^{\bullet}(\star D),d)$.
						Since $\Phi\wedge \psi\in  A^{n,n}_{X}(\log D) $,  by Lemma \ref{lemregularizedintegralforfactorizableformsincaseHA} 
	this tells
		\begin{equation}\label{eqnreductioofstointwithlogmerocomparison}
			\text{Proposition \ref{prophomotopy} (ii) holds.}~\Rightarrow~
		\mathfrak{s}(\phi\wedge \psi)=\mathfrak{s}(\Phi\wedge \psi)=\int_{X}\Phi\wedge \psi\,.
		\end{equation}
	Combining \eqref{eqnassumptionsons},  we see that
the map $\mathfrak{s}$ provides a substitute of the regularized integration operator $\dashint_{X}$ on factorizable forms, for cases beyond $\ref{caseC},(\textbf{NCD})$.
	For this reason, in what follows we also call  $\mathfrak{s}$ the regularized integration map and also write  $\mathfrak{s}(\phi\wedge \psi)=	\mathfrak{s}([\phi]_{\mathrm{dR}}\wedge \psi)$. \\
	

	We shall  next study more closely the dependence of 
$\mathfrak{s}(\phi\wedge \psi)$ on the corresponding cohomology class $[\phi]_{\mathrm{dR}}\in H^{n}(A_{X}^{\bullet}(\star D),d)$, for the case $\ref{caseHA}$ in which Proposition \ref{prophomotopy} (ii) holds and in which $H^{n}(A_{X}^{\bullet}(\star D),d)\cong H^{n}(A_{X}^{\bullet}(\log D),d)\ \cong H^{n}(U)$.

		For clarity, we only give the details for the special case of
$\ref{caseHA}$ where $D$  is a simple normal crossing divisor,
that is, case (\textbf{NCD}).
The main tool is Deligne's spectral sequence \cite{Deligne:1971} in computing the mixed Hodge structure for the
complement $U$.  
The results apply to the Orlik-Solomon spectral sequence (see \cite{Dupont:2015} for details) for the general situation in case $\ref{caseHA}$.
The nonsingular projectivity condition on $X$ then ensures the degeneration of the spectral sequence at the $E_{2}$ page.
Alternatively, one could apply the desingularization argument to reduce case $\ref{caseHA}$  to  case (\textbf{NCD}).

		\subsection{Mixed Hodge-theoretic formulation of regularized integrals}

		\subsubsection{Deligne's mixed Hodge structure}

		We recall Deligne's spectral sequence \cite{Deligne:1971} in computing the mixed Hodge structures for the
		complement $U$ of a simple normal crossing divisor $D$ in a nonsingular projective algebraic variety $X$.

		Put a linear order on the components of $D$ as $D_{a},a=1,2,\cdots, N$. For any \emph{ordered} tuple $I\subseteq [N]:=\{1,2,\cdots,N\}$ consisting of an increasing sequence,
		define
		\begin{equation}\label{eqnstratumDI}
		D_{I}=\bigcap_{a\in I} D_{a}\,,\quad
		D_{[j]}=\bigsqcup_{I:\,|I|=j}D_{I}\,.
		\end{equation}
		
				Consider the bifiltered complex of sheaves $(\mathcal{A}^{\bullet}_{X}(\log D),d)$, equipped with the usual 
				decreasing Hodge filtration $F^{\bullet}$ induced by the 
	 $dz,d\bar{z}$ types of differential forms (see \eqref{eqndfnHodgefiltration}),
	 and the increasing 
	 filtration $W_{\bullet}$ that keeps track of the pole order along $D$.
	 Explicitly, one has 
	 \begin{equation}\label{eqndfnoffiltrations}
	 F^{p}\mathcal{A}_{X}^{\bullet}(\log D)
	 :=\bigoplus_{k\geq p}\mathcal{A}^{k,\bullet-k}_{X}(\log  D )\,,\quad 
	W_{m}\mathcal{A}^{\bullet}_{X}(\log D)
	:=\mathcal{A}_{X}^{\bullet-m}\wedge \mathcal{A}^{m}_{X}(\log D)\,.
	 \end{equation}
	 	 As mentioned in the proof of Proposition \ref{prophomotopy} (ii),
one has $\mathbb{H}^{k}(X, \mathcal{A}^{\bullet}_{X}(\log D))
	 \cong H^{k}(X,A_{X}^{\bullet}(\log D))\cong H^{k}(U)$.
	 The above data on $\mathcal{A}^{\bullet}_{X}(\log D)$ induces 
 filtrations on $\mathbb{H}^{k}(X,\mathcal{A}^{\bullet}_{X}(\log D))\cong H^{k}(U)$
	 \begin{eqnarray}\label{eqnfiltrationsoncohomology}
	 	F^{p}H^{k}(X,A_{X}^{\bullet}(\log D))&=&\im \left(H^{k}(X,F^{p}A_{X}^{\bullet}(\log D))
	 	\rightarrow H^{k}(X,A_{X}^{\bullet}(\log D))\right)
	 	\,,\nonumber\\
	 	W_{j}H^{k}(X,A_{X}^{\bullet}(\log D))&=&\im \left(H^{k}(X,W_{j}A_{X}^{\bullet}(\log D))
	 	\rightarrow H^{k}(X,A_{X}^{\bullet}(\log D))\right)
	 	\,.
	 \end{eqnarray}
	 This puts a mixed Hodge structure on $H^{k}(U)$, with the weight filtration given by $\widetilde{W}_{\bullet}=W_{\bullet}[-k]$. 
	
 The spectral sequences for the corresponding filtered complexes compute the 
 the associated graded on the cohomology 
 from the cohomology of the associated graded.  
	 The spectral sequence associated to the Hodge filtration $F^{\bullet}$ degenerates at the 1st page, and one has
	 \begin{equation}\label{eqnHodgefiltrationdegeneration}
	 \mathrm{gr}^{p}_{F}H^{p+q}(X,A_{X}^{\bullet}(\log D))\cong 
	  H^{p+q}(X,\mathrm{gr}^{p}_{F}A_{X}^{\bullet}(\log D))\,.
	 \end{equation}
	 The spectral sequence associated to the weight filtration  $W_{\bullet}$ degenerates at its 2nd page.
	 In what follows we focus on this one. 	 
	 	For $r\geq 0$, we denote by $d_{r}$  the differential on the $E_{r}$-page  of the spectral sequence associated to the filtered complex $(A_{X}^{\bullet}(\log D), W_{\bullet})$. We also denote $[x]_{E_{r}}$ to be the image of $x$ on the $E_{r}$ page, if it exists. 
	 
	 				For this spectral sequence, one has for $j,q\geq 0$ (with $k=q-j$)
		\begin{eqnarray}\label{eqnE0E1pageentries}
		E_{0}^{-j,q}&=&\Gamma(X, \mathrm{gr}^{W}_{j}\mathcal{A}^{k}_{X}(\log D))\,,\nonumber
\\
				E_{1}^{-j,q}&=&H^{k}(X, \mathrm{gr}^{W}_{j}A^{\bullet}_{X}(\log D))\,,\nonumber\\
					E_{2}^{-j,q}&=&\mathrm{gr}^{W}_{j}H^{k}(X, A^{\bullet}_{X}(\log D))\,.
		\end{eqnarray}
	
				Recall the quotient sheaf sequence 
		\begin{equation}\label{eqnquotientsheafsequence}
			0\rightarrow W_{j-1}\mathcal{A}^{k}_{X}(\log D)\rightarrow W_{j}\mathcal{A}^{k}_{X}(\log D)\rightarrow \mathrm{gr}^{W}_{j}\mathcal{A}^{k}_{X}(\log D)\rightarrow 0\,.
		\end{equation}
		Applying the global section functor and using
		the fact that the sheaves
		$W_{j}\mathcal{A}^{k}_{X}(\log D),W_{j-1}\mathcal{A}^{k}_{X}(\log D)$ are fine and thus $\Gamma$-acyclic, we have an induced sequence of global sections
		\begin{equation}\label{eqnquotientsheafsequence}
			0\rightarrow \Gamma(W_{j-1}\mathcal{A}^{k}_{X}(\log D))\rightarrow  \Gamma(W_{j}\mathcal{A}^{k}_{X}(\log D))\rightarrow^{p_{j}} \Gamma(\mathrm{gr}^{W}_{j}\mathcal{A}^{k}_{X}(\log D))\rightarrow 0\,.
		\end{equation}
		This gives 
		\begin{equation}\label{eqnE0entryexplicit}
			E_{0}^{-j,k+j}=\Gamma(X, \mathrm{gr}^{W}_{j}\mathcal{A}^{k}_{X}(\log D))=\mathrm{gr}^{W}_{j}\Gamma(X, \mathcal{A}^{k}_{X}(\log D))\,. 
		\end{equation}		
		
The iterated Poincar\'e residue map gives a quasi-isomorphism of complexes of sheaves
\begin{equation}\label{eqnE0pageiso}
\mathrm{res}_{j}=\bigoplus_{I:\,|I|=j}\mathrm{res}_{I}:
 \mathrm{gr}^{W}_{j}\mathcal{A}^{\bullet}_{X}(\log D)
 \cong (a_{j})_{*}\mathcal{A}^{\bullet}_{D_{[j]}}[-j]\,,
\end{equation}
where $a_{j}: D_{[j]}\rightarrow X$ is the closed immersion.
Passing to cohomology, it gives an isomorphism of pure Hodge structures of weight $q$
\begin{equation}\label{eqnE1pageiso}
\mathrm{Res}_{j}=\bigoplus_{I:\,|I|=j}\mathrm{Res}_{I}:
E_{1}^{-j,q}\xrightarrow{\cong}
H^{k}(D_{[j]}, \mathcal{A}^{\bullet}_{D_{[j]}}[-j])\otimes \mathbb{Q}(-j)\,.
\end{equation}
The differential $d_{1}$ on the $E_{1}$ page is
a morphism of pure Hodge structures and is strict with respect to the Hodge filtration $F^{\bullet}$.
It is induced by the operator $d$,  such that $\mathrm{res}_{j}\circ d_{1}\circ \mathrm{res}_{j-1}$ is an alternating sum of Gysin maps.
For the special case with $j=0$, one has 
\begin{eqnarray}\label{eqnE0E1E2entryexplicitj=0}
	E_{0}^{0,k}&=&A^{k}_{X}\,,\nonumber\\
	E_{1}^{0,k}&=&{(E_{0}^{0,k}\cap \ker\,d_{0})/ d_{0}E_{0}^{0,k-1}}={A^{k}_{X}/ dA^{k-1}_{X}}\,,\nonumber\\
	E_{2}^{0,k}&=&{(E_{1}^{1,k}\cap \ker\,d_{1})/ d_{1}E_{1}^{0,k-1}}=H^{k}(X, A_{X}^{\bullet})/ dH^{k-1}(X, \mathrm{gr}^{W}_{1}A^{\bullet}_{X}(\log D))\,.
\end{eqnarray}

Deligne's splitting on $(H^{\bullet}(U),F^{\bullet}, W_{\bullet})$ is given by
\begin{equation}\label{eqndfnDelignsplitting}
	H^{\bullet}(U)=\bigoplus_{p,q}I_{p,q}\,,\quad 
	I^{p,q}:=F^{p}\cap\widetilde{W}_{p+q}\cap \left(
	\overline{F^{q}}\cap \widetilde{W}_{p+q}+\sum_{i\geq 2}\overline{F^{q-i+1}}\cap \widetilde{W}_{p+q-i}
	\right)\,.
\end{equation}
The subspaces $I_{p,q}$ satisfy
\begin{equation}
\widetilde{W}_{k}\otimes \mathbb{C}=\bigoplus_{p+q\leq k}I^{p,q}\,,\quad
F^{p}=\bigoplus_{i\geq p} \bigoplus_{q}I^{i,q}\,.	
\end{equation}
Furthermore, under the natural projection  $\mathrm{pr}_{j}: \widetilde{W}_{p+q}\rightarrow \mathrm{gr}^{\widetilde{W}}_{p+q}$ with $j=p+q-k$, one has the isomorphism 
\begin{equation}\label{eqnprofDeligne}
	\mathrm{pr}_{j}:
I^{p,q}\cong F^{p}\mathrm{gr}^{\widetilde{W}}_{p+q}\cap 	\overline{F^{q}}\mathrm{gr}^{\widetilde{W}}_{p+q}\,.
\end{equation}

\subsubsection{Regularized integrals of factorizable forms}

	We now 
	use mixed Hodge structure on $H^{n}(U)$
 to study 
 the regularized integration operator
$\mathfrak{s}$.

\begin{dfn}\label{dfnprojectiontoIn0}
	Consider case  $(\textbf{NCD})$.
	Let $\omega=\phi\wedge \psi$ be a factorizable form and the notation be as above.
	Let $\pi_{I^{p,q}}:H^{n}(U)\rightarrow I^{p,q}$ be the projection
	using Deligne's splitting.
	Under the Deligne splitting of $H^{n}(U)$, one has the Deligne splitting of $[\phi]_{\mathrm{dR}}$ given by
	\begin{equation}\label{DelignesplittingofphidR}
		[\phi]_{\mathrm{dR}}=
		\sum_{p,q:\, p+q\geq n}\pi_{I^{p,q}}([\phi]_{\mathrm{dR}})
		\in \bigoplus_{p,q:\, p+q\geq n}I^{p,q}\,.
	\end{equation}	
\end{dfn}

\begin{lem}\label{lemprojectiontoIn0byMHS}	
	Consider case  $(\textbf{NCD})$.
	Let the notation be as above. Then one has
	\[
	\mathfrak{s}(\phi\wedge \psi)=	\mathfrak{s}\left( \pi_{I^{n,0}}([\phi]_{\mathrm{dR}})\wedge \psi\right)\,.
	\]
\end{lem}
\begin{proof}
Lemma \ref{lemregularizedintegralforfactorizableformsincaseHA}
tells that the regularized integral only depends on the class $[\phi]_{\mathrm{dR}}\in H^{n}(U)$.
For any $q\geq 1$, by \eqref{eqndfnDelignsplitting} one has 
\[
\pi_{I^{p,q}}([\phi]_{\mathrm{dR}})\in \overline{F^{q}}H^{n}(X,A_{X}^{\bullet}(\log D))
=\im \left(H^{n}(X,\overline{F^{q}}A_{X}^{\bullet}(\log D))\rightarrow H^{n}(X,A_{X}^{\bullet}(\log D))\right)\,.
\]
That is, this cohomology class in $H^{n}(U)$ can be represented by a d-closed form in $A_{X}^{\bullet,q}(\log D)$ with $q\geq 1$.
Applying Lemma \ref{lemregularizedintegralforfactorizableformsincaseHA} and  \eqref{eqnreductioofstointwithlogmerocomparison} to this cohomology class, we see that by type reasons
\[
\mathfrak{s}(\pi_{I^{p,q}}([\phi]_{\mathrm{dR}})\wedge \psi)=0\,,\quad q\geq 1\,.
\]
It follows that 
\[
	\mathfrak{s}([\phi]_{\mathrm{dR}}\wedge \psi)=	\sum_{p,q:\, p+q\geq n}\mathfrak{s}(\pi_{I^{p,q}}([\phi]_{\mathrm{dR}})\wedge \psi)=
	\mathfrak{s}\left(\pi_{I^{n,0}}([\phi]_{\mathrm{dR}})\wedge \psi\right)\,.
\]
	\end{proof}

	\subsubsection{Regularized integral as a cohomological pairing on compact space}
	\label{seclifttoE1E0page}
	
	In order to reduce the regularized integral in Lemma \ref{lemprojectiontoIn0byMHS}	 from the noncompact space $U$ to the compact space $X$, we use the relation between $\mathrm{gr}^{W}_{0}H^{n}(U)$ and $H^{n}(X)$.
	
	\begin{lem}\label{lemInospacedescription}	Consider case  $(\textbf{NCD})$.
Then  
		$I^{n,0}=F^{n}H^{n}(U)\cap W_{0}H^{n}(U)$ is canonically identified with  $H^{0}(X,\Omega_{X}^{n})$.
	\end{lem}
\begin{proof}
By the definition of $W_{\bullet}$ on $H^{n}(U)$, the fact that 
$W_{0}=\mathrm{gr}^{W}_{0}$ and \eqref{eqnE0E1E2entryexplicitj=0}, one has 
\begin{eqnarray*}
I^{n,0}
&=&F^{n}H^{n}(X,A_{X}^{\bullet}(\log D))\cap  \im \left(H^{n}(X, \mathrm{gr}^{W}_{0}A_{X}^{n}(\log D))\rightarrow H^{n}(X,A_{X}^{\bullet}(\log D))\right)\\
&=&F^{n}H^{n}(X,A_{X}^{\bullet}(\log D))\cap 
{H^{n}(X, \mathrm{gr}^{W}_{0}A_{X}^{n}(\log D))\over   dH^{n-1}(X, \mathrm{gr}^{W}_{1}A^{\bullet}_{X}(\log D))}
\,.
\end{eqnarray*}
By the strictness  \eqref{eqnHodgefiltrationdegeneration} of $d_{1}$ with $F^{\bullet}$ and that $H^{k}(X, \mathrm{gr}^{W}_{j}A^{\bullet}_{X}(\log D))$ carries a pure Hodge structure of weight $k$, we then have as desired
	\begin{eqnarray}\label{eqnIn0asquotient}
I^{n,0}
&\cong &{F^{n}H^{n}(X, \mathrm{gr}^{W}_{0}A_{X}^{\bullet}(\log D))/  F^{n}dH^{n-1}(X, \mathrm{gr}^{W}_{1}A^{\bullet}_{X}(\log D))}\nonumber\\
&= &{F^{n}H^{n}(X, A_{X}^{\bullet})/  dF^{n}H^{n-1}(X, \mathrm{gr}^{W}_{1}A^{\bullet}_{X}(\log D))}\nonumber\\
&= &{H^{0}(X, \Omega_{X}^{n}})\,.
\end{eqnarray}
\end{proof}

Realizations of $\pi_{I^{n,0}}([\phi]_{\mathrm{dR}})$
on the compact space $X$ can now be constructed by lifting $\mathrm{pr}_{0} \pi_{I^{n,0}}([\phi]_{\mathrm{dR}})$ to the $E_{1}$ and $E_{0}$ pages of the spectral sequence as follows.
Denote the natural projection
\begin{equation}\label{eqnprj}
\mathrm{pr}_{j}: W_{j}H^{n}(U)\rightarrow E_{2}^{-j,n+j}=\mathrm{gr}^{W}_{j}H^{n}(U)\,.
\end{equation}
One has from \eqref{eqnprofDeligne} that 
\begin{equation}
\mathrm{pr}_{0}: I^{n,0}\cong F^{n}\mathrm{gr}^{W}_{0}H^{n}(U)=F^{n}E_{2}^{0,n}\,.
\end{equation}
Since $W_{-1}H^{n}(U)=0$, the projection $\mathrm{pr}_{0}$ is actually the identity map. 

\begin{dfn}\label{dfnliftofIn0toE1E0pages}
		Let $[\mathrm{pr}_{0}\pi_{I^{n,0}}([\phi]_{\mathrm{dR}})]_{E_{r}}$ be a lift of
	$\mathrm{pr}_{0}\pi_{I^{n,0}}([\phi]_{\mathrm{dR}})$
	along the map $F^{n}E_{r}^{0,n}\rightarrow F^{n}E_{2}^{0,n}$, for $r=0,1$.
	Set
	$[\phi]_{\mathrm{Hodge}}
	=[\mathrm{pr}_{0}\pi_{I^{n,0}}([\phi]_{\mathrm{dR}})]_{E_{1}}$.
	See 	Figure \ref{figure:figliftalongspectralsequence} below for an illustration.
	\begin{figure}[h]
		\centering
		\[
		\xymatrix{
			&&&&		I^{n,0}\ar[dd]^{\mathrm{pr}_{0}} \\
			&&&&\\
			E_{0}^{0,n}\ar[rr]&	&E_{1}^{0,n}\ar[rr] &&E_{2}^{0,n} 
		}
		\]	
		\caption{Lift $\mathrm{pr}_{0}(I^{n,0})$ to the $E_{1}$ and $E_{0}$ pages of the spectral sequence.}
		\label{figure:figliftalongspectralsequence}
	\end{figure}

	\end{dfn}

\begin{dfn}\label{dfntracemaponholantiholforms}

Define the trace map to be the pairing
	\[
	\mathrm{Tr}: 
	H^{0}(X,\Omega_{X}^{n})\otimes H^{0}(X,\overline{\Omega}_{X}^{n})
	\xrightarrow{\wedge } H^{0}(X,\mathcal{A}_{X}^{n,n})\xrightarrow{\int_{X}}\mathbb{C}\,.
	\]
	This map  is a special case of the one in Definition \ref{dfnontracemaponcurrentcohomology}.  By Hodge decomposition, it extends to a map on
	$H^{n}(X)\otimes H^{0}(X,\overline{\Omega}_{X}^{n})$ which we denote by the same symbol.
\end{dfn}

\begin{prop}\label{propliftofstoE1E0}	
Consider case  $(\textbf{NCD})$.
 Then one has
	\[
	\mathfrak{s}(\phi\wedge \psi)=\mathrm{Tr}\left( 	[\phi]_{\mathrm{Hodge}}\wedge \psi\right)=
	\int_{X}[\mathrm{pr}_{0}\pi_{I^{n,0}}([\phi]_{\mathrm{dR}})]_{E_{0}}\wedge \psi\,.
	\]
\end{prop}
\begin{proof}
	From 
	Lemma \ref{lemprojectiontoIn0byMHS}	 and $\mathrm{pr}_{0}=\mathrm{id}$, one has	
			\[
	\mathfrak{s}(\phi\wedge \psi)=	\mathfrak{s}\left(\mathrm{pr}_{0} \pi_{I^{n,0}}([\phi]_{\mathrm{dR}})\wedge \psi\right)\,.
	\]	
	By 
	\eqref{eqnIn0asquotient} in Lemma \ref{lemInospacedescription}, the element
	$	[\phi]_{\mathrm{Hodge}}$ is the unique lift of  $\mathrm{pr}_{0}\pi_{I^{n,0}}([\phi]_{\mathrm{dR}})$ along the inclusion $F^{n}E_{1}^{0,n}\hookrightarrow F^{n}E_{2}^{0,n}$. Since both 
	$[\phi]_{\mathrm{Hodge}}$ and $\psi$ are elements in
	 $E_{1}^{0,n}=H^{n}(X)$, one has
	\[
	\mathfrak{s}\left(\mathrm{pr}_{0} \pi_{I^{n,0}}([\phi]_{\mathrm{dR}})\wedge \psi\right)=\mathrm{Tr}\left( 	[\phi]_{\mathrm{Hodge}}\wedge \psi\right)\,.
	\]	
	By its construction and $\eqref{eqnE0E1E2entryexplicitj=0}$, the form $ [\mathrm{pr}_{0}\pi_{I^{n,0}}([\phi]_{\mathrm{dR}})]_{E_{0}}\in A_{X}^{n}$ is
	a representative of $	[\phi]_{\mathrm{Hodge}}\in H^{n}(X)$. Therefore, 
	\[
	\mathrm{Tr}\left( 	[\phi]_{\mathrm{Hodge}}\wedge \psi\right)=\int_{X}[\mathrm{pr}_{0}\pi_{I^{n,0}}([\phi]_{\mathrm{dR}})]_{E_{0}}\wedge \psi\,.
	\]
Combining the above three relations, one finishes the proof.

\end{proof}

A lift $[\mathrm{pr}_{0}\pi_{I^{n,0}}([\phi]_{\mathrm{dR}})]_{E_{0}}$
can also be provided using the descriptions of the $I^{p,q}$'s as subspaces of $H^{n}(U)$.
Recall
the Deligne splitting of $[\phi]_{\mathrm{dR}}\in H^{n}(U)$ given in \eqref{DelignesplittingofphidR}
\begin{equation}\label{DelignesplittingofphidRrecalled}
	[\phi]_{\mathrm{dR}}=
	\sum_{p,q:\, p+q\geq n}\pi_{I^{p,q}}([\phi]_{\mathrm{dR}})
	\in \bigoplus_{p,q:\, p+q\geq n}I^{p,q}\,.
\end{equation}

\begin{dfn}
\label{dfnlifttoE-1page}
		Consider case  $(\textbf{NCD})$.
Set  $j=p+q-n$ as before. 
	For any $j$ with $0\leq j\leq n$, define as in \eqref{eqnquotientsheafsequence} and \eqref{eqnprj} the natural projections 
\begin{equation}
p_{j}: W_{j}A^{n}_{X}(\log D)\rightarrow \mathrm{gr}^{W}_{j}A^{n}_{X}(\log D)  \,,\quad 
\mathrm{pr}_{j}: W_{j}H^{n}(U)\rightarrow \mathrm{gr}^{W}_{j}H^{n}(U)\,.
\end{equation}
Let $J^{p,q}\subseteq  H^{n}(W_{j}A_{X}^{\bullet}(\log D))$
be the preimage of $I^{p,q}$ along the map
$ H^{n}(W_{j}A_{X}^{\bullet}(\log D))\rightarrow I^{p,q}$,
 and $\Gamma^{p,q}\subseteq  \Gamma(W_{j}A_{X}^{n}(\log D))\cap \ker\,d$ be the preimage of $I^{p,q}$
along the map
\begin{equation}\label{eqnprojectiontoIpq}
 	\Gamma(X,W_{j}A_{X}^{n}(\log D))\rightarrow  H^{n}(W_{j}A_{X}^{\bullet}(\log D))\rightarrow I^{p,q}\,.
\end{equation}
See the commutative diagram in Figure \ref{figure:figliftpiIpqalongspectralsequence} below for an illustration.		
\begin{figure}[h]
	\centering
	\[
	\xymatrix{
		&&	\bigoplus_{j} 	\bigoplus_{p+q=n+j}\Gamma^{p,q}\ar@{.>}[dd]^{\oplus_{j}p_{j}}\ar[r]	&	\bigoplus_{j} 	\bigoplus_{p+q=n+j}J^{p,q}\ar[r] &	\bigoplus_{j}	\bigoplus_{p+q=n+j}I^{p,q}\ar[dd]_{\cong}^{\mathrm{pr}=\oplus_{j}\mathrm{pr}_{j}}	& \\
	&	&&&&\\
	&	\bigoplus_{j}W_{j}A_{X}^{n}(\log D)\ar@{.>}[r]^{\oplus_{j}p_{j}}&	\bigoplus_{j}E_{0}^{-j,n+j}\ar[r]	&	\bigoplus_{j}E_{1}^{-j,n+j}\ar[r] &	\bigoplus_{j}E_{2}^{-j,n+j} 
	}
	\]	
	\caption{Lift to the $E_{r}$ page of the spectral sequence.}
	\label{figure:figliftpiIpqalongspectralsequence}
\end{figure}
For $r=0,1,2$, denote by  
\begin{equation}\label{eqnlifttoE-1page}
		[[\phi]_{\mathrm{dR}}]_{E_{r}}
	=\sum_{j=0}^{n}
	\sum_{p,q:\, p+q= n+j}\phi_{r}^{p,q}
\end{equation}
 an $E_{r}$ lift
of 	$[\phi]_{\mathrm{dR}}$ in \eqref{DelignesplittingofphidRrecalled} along the maps in the top row.
Similarly, for $r=-1,0,1,2$, denote by $[\mathrm{pr}[\phi]_{\mathrm{dR}}]_{E_{r}}$ an $E_{r}$ lift
of 	$\mathrm{pr}[\phi]_{\mathrm{dR}}$  along the maps in the bottom row.
In each case, we denote 
\begin{equation}\label{eqndfndelta0}
	\phi_{r}^{j}:=
	\sum_{p,q:\, p+q=n+j}\phi_{r}^{p,q}\,.
\end{equation}
\end{dfn}

	Since $W_{-1}=0$ and thus $\mathrm{pr}_{0}=\mathrm{id}$, 
	 there is no difference between $[[\phi]_{\mathrm{dR}}]_{E_{r}}$ and $[\mathrm{pr}[\phi]_{\mathrm{dR}}]_{E_{r}}$ as far as only the $I^{p,q}, p+q=n$ components are concerned.
	As $W_{0}H^{n}(U)$ has a pure Hodge structure,
	for $p+q=n$, the decomposition \eqref{eqndfndelta0} is nothing but the Hodge decomposition when $r=1$
	and can be chosen to respect the Hodge type when $r=0$.
	
	\begin{rem}
	Take an element $x_{j}$ from $W_{j}A_{X}^{n}(\log D)$ (or more generally from $W_{j}A_{X}^{k}(\log D)$ with $1\leq j\leq k$).
	In order for $p_{j}(x_{j})\in E_{0}^{-j,n+j}$ to descend to the $E_{2}$ page, it needs to satisfy the conditions
	\begin{equation}\label{eqnd0d1closedness}
		d_{0}p_{j}(x_{j})=0\in E_{0}^{-j,n+1+j}\,,\quad 
		d_{1}[p_{j}(x_{j})]_{E_{1}}=0\in E_{1}^{-j+1,n+j}\,.
	\end{equation}
	These conditions can be made more direct as conditions on $x_{j}$, by applying
	standard facts about the spectral sequence associated to the filtered complex $(A_{X}^{\bullet}(\log D), W_{\bullet})$ which give \begin{eqnarray}\label{eqnd0d1closednessmoredirect}
		\ker\,d_{r}&=&W_{j}A_{X}^{n}(\log D)\cap d^{-1}W_{j-r-1}A_{X}^{n}(\log D)+
		W_{j-1}A^{n}_{X}(\log D)
		\,,\nonumber\\
		\im\,d_{r}&=& W_{j}A_{X}^{n}(\log D)\cap dW_{j+r} A_{X}^{n-1}(\log D)+
		W_{j-1}A^{n}_{X}(\log D)
		\,.
	\end{eqnarray}
Thanks to the $E_{2}$-degeneration of the spectral sequence, the condition $p_{j}(x_{j})\in \ker\,d_{0}\cap \ker\,d_{1}$ in \eqref{eqnd0d1closedness} becomes
	$p_{j}(x_{j})\in \cap_{r=0}^{\infty}\ker\,d_{r}$. By \eqref{eqnd0d1closednessmoredirect}, the latter is equivalent to 
	\begin{equation}\label{eqndrclosednessmoredirect}
		x_{j}\in W_{j}A_{X}^{n}(\log D)\cap \ker\,d+
		W_{j-1}A^{n}_{X}(\log D)\,.
	\end{equation}
	This condition is indeed satisfied by a lift $	[[\phi]_{\mathrm{dR}}]_{E_{0}}$ by the description of $\Gamma^{p,q}$ in
Definition \ref{dfnlifttoE-1page}.
\end{rem}


\begin{cor}\label{cordelta0integral}
	Consider case  $(\textbf{NCD})$.
	\begin{enumerate}[i).]
		\item 

Suppose $[\mathrm{pr}[\phi]_{\mathrm{dR}}]_{E_{-1}}=\sum_{j=0}^{n}
\sum_{p,q:\, p+q= n+j}\phi_{-1}^{p,q}$ is an $E_{-1}$ lift of $\mathrm{pr}[\phi]_{\mathrm{dR}}$.
Then 
	\[
\mathfrak{s}(\phi\wedge \psi)=
\int_{X}	\phi_{-1}^{n,0}\wedge \psi=
\int_{X}	\phi_{-1}^{0}\wedge \psi\,.
\]
\item 
Suppose $[[\phi]_{\mathrm{dR}}]_{E_{0}}=\sum_{j=0}^{n}
\sum_{p,q:\, p+q= n+j}\phi_{0}^{p,q}$ is an $E_{0}$ lift of $[\phi]_{\mathrm{dR}}$. Then one has furthermore
	\[
		\mathfrak{s}(\phi_{0}^{j}\wedge \psi)=\int_{X}\phi_{0}^{j}\wedge \psi=0\,\quad \text{if}\quad  j\geq 1\,.
	\]
		\end{enumerate}
\end{cor}
\begin{proof}
		\begin{enumerate}[i).]
		\item

	Since $W_{-1}A_{X}^{n}(\log D)=0$ and thus $p_{0}=\mathrm{id}$, we have 
	$
	\phi_{-1}^{n,0}=		\phi_{0}^{n,0}\in \Gamma (X,\Omega_{X}^{n})
	$ and $\phi_{-1}^{0}\in A_{X}^{n}\cap \ker\,d$.
Applying  Lemma \ref{lemregularizedintegralforfactorizableformsincaseHA} and  \eqref{eqnreductioofstointwithlogmerocomparison}, one has $\mathfrak{s}(\phi_{-1}^{n,0}\wedge \psi)=\int_{X}	\phi_{-1}^{n,0}\wedge \psi$ and $\mathfrak{s}(\phi_{-1}^{0}\wedge \psi)=\int_{X}	\phi_{-1}^{0}\wedge \psi$.
By	Proposition	\ref{propliftofstoE1E0} and type reasons, one then has the desired claim.
	\item 
		First observe that by construction, one has $	d\phi_{0}^{p,q}=0$, thus 
	$\mathfrak{s}(\phi_{0}^{p,q}\wedge \psi)$ makes sense.	
 From the same reasoning in 
	Lemma \ref{lemprojectiontoIn0byMHS}, one has the vanishing of $\mathfrak{s}(\phi_{0}^{p,q}\wedge \psi)$ for $q\geq 1$.
	This yields the desired claim.
			\end{enumerate}
\end{proof}

\subsection{Proof of Theorem \ref{thmgeneralizedregularizationmapBintro}}

The proof of our second main result Theorem \ref{thmgeneralizedregularizationmapBintro} 
is given as follows.
\begin{proof}[Proof of Theorem \ref{thmgeneralizedregularizationmapBintro}]

	For simplicity, we only consider the special case  (\textbf{NCD}) of
	$\ref{caseHA}$. The proof for the general case $\ref{caseHA}$ is only notationally more difficult. 
	The proof for the (\textbf{NCD}) case now follows from Proposition \ref{propliftofstoE1E0} and Corollary \ref{cordelta0integral}.
	
\end{proof}

\subsection{Splittings of factorizable forms}
\label{secsplittingoffactorizableforms}

Proposition \ref{propliftofstoE1E0} and Corollary \ref{cordelta0integral}	 tell that 
the regularized integral only depends on the data of $\phi$
on the $E_{2}=E_{\infty}$ page and can be lifted to the $E_{1}, E_{0}$ pages
in terms of a cohomology pairing and ordinary integration on $X$.
However, these results do not yield a constructive way to compute
the quantities $\phi_{r}^{n,0}$  from $\phi$.
For the practical purpose of evaluating the regularized integral, it is desirable to make them more concrete, preferably on the form level.

Even for a pure Hodge structure, finding the $I^{p,q}$ components of a cohomology class, that is, Hodge decomposition, 
is already nontrivial and requires more delicate machinery such as harmonic forms.
It is thus natural to work with the lift of $I^{0}:=\bigoplus_{p+q=n}I^{p,q}$ as a whole.
Due to Corollary \ref{cordelta0integral}, this is enough for the purpose of evaluating the regularized integral.

It turns out that finding the $E_{r}$ lifts $\phi_{r}^{0}$ requires understanding the lifts of all components $\mathrm{pr}_{j}[\phi]_{\mathrm{dR}}$ instead of the component $\mathrm{pr}_{0}[\phi]_{\mathrm{dR}}$  only. Towards finding an $E_{0}$ lift of $[\phi]_{\mathrm{dR}}$, what we are after is in fact a section of the projection 
\begin{equation}\label{eqnsplittingofgrmap}
\bigoplus_{j=0}^{n}\mathrm{pr}_{j}: H^{n}(U)=\bigoplus_{j=0}^{n} I^{j}\rightarrow \bigoplus_{j=0}^{n} \mathrm{gr}^{W}_{j}H^{n}(U)\,,\quad I^{j}:=\bigoplus_{p,q:\,p+q=n+j}I^{p,q}\,.
\end{equation}
Conceptually, a construction using the residue map goes as follows.
Recall that the residue map $\mathrm{res}_{j}=\oplus_{I:\,|I|=j}\mathrm{res}_{I}$ is defined on the sheaf level
\begin{equation}\label{eqnresiduejonsheaves}
	\mathrm{res}_{j}:
	(W_{j}\mathcal{A}_{X}^{p,\bullet}(\log D),\bar{\partial})\rightarrow ((a_{j})_{*}\mathcal{A}_{D_{[j]}}^{p-j,\bullet},\bar{\partial}) \,,\quad 
	(W_{j}\mathcal{A}_{X}^{\bullet}(\log D),d)\rightarrow ((a_{j})_{*}\mathcal{A}_{D_{[j]}}^{\bullet-j},d) \,.
\end{equation}
Let $\mathcal{K}_{j}^{p,\bullet}\subseteq (W_{j}\mathcal{A}_{X}^{p,\bullet}(\log D),\bar{\partial})$ and $\mathcal{K}_{j}^{\bullet}\subseteq (W_{j}\mathcal{A}_{X}^{\bullet}(\log D),d)$
be the kernel of the corresponding  maps
which are complexes of fine sheaves.
By the $\bar{\partial}$-Poincar\'e lemma, one has the Dolbeault resolutions
\[
0\rightarrow 
W_{j}\Omega_{X}^{p}(\log D)\rightarrow 
(W_{j}\mathcal{A}_{X}^{p,\bullet}(\log D),\bar{\partial})\,,\quad 
0\rightarrow 
\Omega_{D_{j}}^{p}\rightarrow 
(\mathcal{A}_{D_{[j]}}^{p,\bullet},\bar{\partial})\,.
\]
This implies that the middle and right vertical arrows in the diagram in Figure \ref{figure:figkernelofresiduemap} below are quasi-isomorphisms.
By the five lemma the left vertical arrow is also a quasi-isomorphism.
\begin{figure}[h]
	\centering
	\[
	\xymatrix{
		0\ar[r]	&W_{j-1}\Omega_{X}^{\bullet}(\log D)\ar[r]\ar@{^{(}->}[dd]   & W_{j}\Omega_{X}^{\bullet}(\log D)\ar[rr]^{\mathrm{res}_{j}}\ar@{^{(}->}[dd]&&
		(a_{j})_{*}\Omega_{D_{[j]}}^{\bullet-j}\ar[r]\ar@{^{(}->}[dd]& 0\\
		&&&&\\
		0\ar[r]	& 
		\mathcal{K}_{j}^{\bullet}\ar[r]
		&W_{j}\mathcal{A}_{X}^{\bullet}(\log D)\ar@/^/[rr]^{\mathrm{res}_{j}}&& (a_{j})_{*}\mathcal{A}_{D_{[j]}}^{\bullet-j}\ar@/^/[ll]^{s_{j}}\ar[r] &0
	}
	\]	
	\caption{Kernel of residue map.}
	\label{figure:figkernelofresiduemap}
\end{figure}
In particular, the following inclusions induce quasi-isomorphisms
\begin{equation}\label{eqnquasiisoofcomplexK}
W_{j-1}\Omega_{X}^{\bullet}(\log D)\hookrightarrow W_{j-1}\mathcal{A}_{X}^{\bullet}(\log D)
\hookrightarrow \mathcal{K}_{j}^{\bullet}\,.
\end{equation}
Taking hypercohomology, one thus has 
\begin{equation}\label{eqnhypercohomologyofcomplexK}
	H^{k}(	\mathcal{K}_{j}^{\bullet})\cong  \mathbb{H}^{k}(W_{j-1}\Omega_{X}^{\bullet}(\log D))\cong H^{k}(W_{j-1}A_{X}^{\bullet}(\log D))\,.
\end{equation} 
From the bottom row of the diagram in Figure \ref{figure:figkernelofresiduemap} and \eqref{eqnhypercohomologyofcomplexK}, one obtains a long exact sequence 
\[
\cdots \rightarrow 
H^{k-j-1}(A_{D_{[j]}}^{\bullet})\rightarrow  H^{k}(W_{j-1}A_{X}^{\bullet}(\log D))
\rightarrow H^{k}(W_{j}A_{X}^{\bullet}(\log D))\rightarrow^{\mathrm{Res}_{j}} H^{k-j}(A_{D_{[j]}}^{\bullet})\rightarrow \cdots
\]
For a cohomology class $\varphi_{j}\in H^{k}(W_{j}A_{X}^{\bullet}(\log D))$, by constructing a lift $s_{j}$ of $\mathrm{Res}_{j}$, one obtains 
\begin{equation}\label{eqnsubstracliftofresidueclass}
\varphi_{j}-s_{j}\mathrm{Res}_{j}\varphi_{j}\in \ker\mathrm{Res}_{j}=
\im \left(H^{k}(W_{j-1}A_{X}^{\bullet}(\log D))
\rightarrow H^{k}(W_{j}A_{X}^{\bullet}(\log D))\right)\,.
\end{equation}

Now given any cohomology class $\varphi_{n}\in W_{n}H^{k}(U)=H^{k}(W_{n}A_{X}^{\bullet}(\log D))$, applying the above construction, one has 
\[
\varphi_{n-1}:=\varphi_{n}-s_{n}\mathrm{Res}_{n}\varphi_{n}\in 
\ker\mathrm{Res}_{n}=W_{n-1}H^{k}(A_{X}^{\bullet}(\log D))\,.
\]
Continue this reasoning, by successively subtracting residue classes as in \eqref{eqnsubstracliftofresidueclass},
one eventually reaches 
\[
\varphi_{0}\in W_{0}H^{n}(A_{X}^{\bullet}(\log D))=\im\left( H^{n}(A_{X}^{\bullet})\rightarrow H^{n}(A_{X}^{\bullet}(\log D))\right)\,.
\]
There are two issues that need to be addressed.
\begin{enumerate}[a).]
	\item\label{issuea} The lifts $s_{j},1\leq j\leq n$ need to be constructed carefully so that
	when applying the above construction to the class $[\phi]_{\mathrm{dR}}\in  W_{n}H^{n}(A_{X}^{\bullet}(\log D))$, one obtains the desired class
	$\sum_{p+q=n}\pi_{I^{p,q}} [\phi]_{\mathrm{dR}}\in W_{0}H^{n}(A_{X}^{\bullet}(\log D))$.
	\item\label{issueb} To obtain the desired $E_{0}$ lift $\phi_{0}^{0}$ of $\pi_{I^{0}}[\phi]_{\mathrm{dR}}$ in Definition \ref{dfnlifttoE-1page}, 
	the construction should be carried out on the form level. In particular, this requires constructing $s_{j}$ such that $s_{j}\mathrm{res}_{j}\alpha_{j}$ is a $d$-closed form when $\alpha_{j}$ is so.
\end{enumerate}

In general, explicitly constructing the desired lift on form level can be difficult.
In what follows, we shall only discuss the construction of the $E_{0}$ lift $\phi_{0}^{0}$ in case $\ref{caseC}$. 
Our main reference for this part is \cite{Griffiths:1973}.

\subsubsection{Lifts along the residue map}
\label{secliftsofresiduemap}

As a preparatory step, 
we first recall some basic facts  about residue maps mainly from \cite{Griffiths:1973}, again focusing on  case  $(\textbf{NCD})$. 

We start with a description of the kernel of the residue map.
By its definition, one has $W_{j-1}\mathcal{A}_{X}^{p,q}\subsetneq \mathcal{K}_{j}^{p,q}$ in general.
The sheaf $\mathcal{K}_{j}^{p,q}$ can be described as follows.

Locally near any point in $D_{I}$ with $I=(a_{1}<a_{2}<\cdots <a_{j})$, one can take a polydisk neighborhood $V_{I}$ equipped with coordinates $(s,w)=(s_{a_1},\cdots, s_{a_j},w_{j+1},\cdots, w_{n})$ such that locally $D_{a_t}$ is given by $s_{a_t}=0$ for $t=1,2,\cdots, j$.

\begin{dfn}\label{dfnmIk}  Consider case  $(\textbf{NCD})$.	
	For $I=(a_{1}<a_{2}<\cdots<a_{j})$, define the sheaves
	\begin{equation*}\label{eqnmI}
		\mathfrak{m}_{I}^{p,q}:=\sum_{t=1}^{j} \mathcal{O}_{X}(-D_{a_{t}})\otimes \mathcal{A}_{X}^{p,q}\,,\quad 
		\overline{\mathfrak{m}}_{I}^{p,q}:=\sum_{t=1}^{j} \overline{\mathcal{O}_{X}(-D_{a_{t}})}\otimes \mathcal{A}_{X}^{p,q}\,.
	\end{equation*}	
	Set also 	$\mathfrak{m}_{j}^{p,q}=\sum_{I:\,|I|=j}		\mathfrak{m}_{I}^{p,q},
	\overline{\mathfrak{m}}_{j}^{p,q}=\sum_{I:\,|I|=j}		\overline{\mathfrak{m}}_{I}^{p,q}$ and $\mathfrak{m}_{I}^{k}=\oplus_{p+q=k}		\mathfrak{m}_{I}^{p,q},
	\overline{\mathfrak{m}}_{I}^{k}=\oplus_{p+q=k}		\overline{\mathfrak{m}}_{I}^{p,q}$.
\end{dfn}
A local analysis within an aforementioned neighborhood $V_{I}$ gives
\begin{equation}\label{eqnkerresiduesheaf}
	\mathcal{K}_{j}^{p,q}=\sum_{I}\bigwedge_{a_{t}\in I}{ds_{a_t}\over s_{a_t}}\sum_{a_{t}\in I}\mathcal{A}_{X}^{p-j,q-1}{d\bar{s}_{a_t}}
	+\sum_{I} \bigwedge_{a_{t}\in I}{ds_{a_t}\over s_{a_t}}	\overline{\mathfrak{m}}_{I}^{p-j,q}
	+W_{j-1}\mathcal{A}_{X}^{p,q}(\log D)\,.
\end{equation}

The space $W_{j}A_{X}^{p,q}$ can be made more concrete on form level as follows.
\begin{dfn}
\label{dfnangularform} 	Consider case  $(\textbf{NCD})$.
For each divisor $D_{a}$, fix once and for all a holomorphic section $\sigma_{a}$ of the line bundle $\mathcal{O}_{X}([D_{a}])$ whose divisor is $D_{a}$ and a 
Hermitian metric $|\cdot|^2$  on this line bundle.
Denote 
\begin{equation}\label{eqnangularform}
	\eta_a:={1\over 2\pi \mathbf{i}\,}\partial \ln |\sigma_a|^2\in A_{X}^{1,0}(\log D)\,.
\end{equation}
\end{dfn}
\begin{rem}

By construction,
 the resulting form $\bar{\partial}\eta_a$ is a smooth form on $X$ representing the first Chern class $-c_{1}(\mathcal{O}_{X}([D_{a}]))=-\mathrm{PD}([D_{a}])$. 
In view of \eqref{eqnbarpartialcurrent}, the current decomposition
\begin{equation*}\label{eqndecompositionofintegralcurrentonD}
	2\pi \mathbf{i}\,
	\mathrm{res}_{\partial}(\eta_a\wedge \cdot)=
	\bar{\partial}T_{\eta_a}-T_{\bar{\partial}\eta_a}
\end{equation*}
expresses the integration current over $D_{a}$ as the difference between
the $\bar{\partial}$-exact current $\bar{\partial}T_{\eta_a}$ and the integral current $T_{\bar{\partial}\eta_a}$ represented by the smooth form
$\bar{\partial}\eta_a$.
\end{rem}

Elements in $W_{j}A_{X}^{k}(\log D)$ can now be described  using the forms $\eta_{a}$ above.

\begin{lem}\label{lemcharacterizationofkerresj}
	Consider case  $(\textbf{NCD})$. Suppose $1\leq j\leq k$. Then one has 
			\begin{equation}\label{eqnWjspaceglobalexpression}
		W_{j}A_{X}^{k}(\log D)=\sum_{I:|I|=j}\eta_{I}\wedge A_{X}^{k-j}+W_{j-1}A_{X}^{k}(\log D)\,.
		\end{equation}
		That is, 
		any element  $\alpha\in W_{j}A_{X}^{k}(\log D)$  takes the form 
		\begin{equation}\label{eqnWjelementglobalexpression}
			\alpha=\sum_{I:\,|I|=j}\eta_{I}\wedge \alpha_{I} +\beta\,,\quad  \quad \alpha_{I} \in A_{X}^{k-j}\,,\quad \beta\in W_{j-1}A_{X}^{k}(\log D)\,.
		\end{equation}
		In particular, one has the isomorphism between exterior algebras $A_{X}^{\bullet}(\log D)=A_{X}^{\bullet}\langle \eta_{1},\cdots,\eta_{N}\rangle$.

\end{lem}
\begin{proof}

	 For a point in $D_{I}$,
		choose a small neighborhood $V_{I}$ as before.
		By the construction in \eqref{eqnangularform}, locally in $V_{I}$ one has
		\begin{equation*}\label{eqnxiIbehavior}
			\eta_{I}=\bigwedge_{a_{t}\in I}({1\over 2\pi \mathbf{i}}{ds_{a_{t}}\over s_{a_{t}}}+ x_{a_t})
			\,,\quad x_{a_t}\in \mathcal{A}_{X}^{1,0}(V_{I})\,.
		\end{equation*}		
		By the definition of $\alpha\in W_{j}A_{X}^{k}(\log D)$ and a partition of unity argument,		
		one arrives at \eqref{eqnWjelementglobalexpression} immediately.
		The second part of the claim follows by iterating the decomposition \eqref{eqnWjspaceglobalexpression}.

\end{proof}

Applying the global section functor to \eqref{eqnresiduejonsheaves}, one has 
a corresponding residue map on forms
\begin{equation}\label{eqnresiduejonAk}
	\mathrm{res}_{j}:
	W_{j}A_{X}^{p,q}(\log D)\rightarrow A_{D_{[j]}}^{p-j,q} \,,\quad 1\leq j\leq p\,.
\end{equation}
With the forms in Definition \ref{dfnangularform}
 introduced, one can construct lifts of residue forms along the surjection 	$\mathrm{res}_{j}$ in \eqref{eqnresiduejonAk}.

\begin{dfn}
	\label{dfnliftalongresiduemap}
Consider case  $(\textbf{NCD})$.
A lift of the iterated residue \eqref{eqnresiduejonAk} on the form level  is provided by $\rho_{j}=\{\rho_{I}\}_{I:\,|I|=j}$ with
\begin{equation}\label{eqnlifting}
	\rho_{I}( 
	\gamma_{I}) = \eta_{I}\wedge  \widetilde{\gamma_{I}}\,,
\end{equation}
where $\widetilde{\gamma_{I}}$ is a $C^{\infty}$-extension of $\gamma_{I}$ from $D_{I}$ to $X$.
For $\alpha\in W_{j}A_{X}^{k}(\log D)$, denote 
\begin{equation}\label{eqnpureresiduepart}
	R_{j}(\alpha)=\sum_{I:\,|I|=j}\rho_{I}(\mathrm{res}_{I}(\alpha))\in W_{j}A_{X}^{k}(\log D)\,.
\end{equation}

\end{dfn}

Using a partition of unity argument, the extension $\widetilde{\gamma_{I}}$ can be chosen to 
respect Hodge type.
When 
$\gamma_{I}$ is $d$-closed, one can define a solid tubular neighborhood $U_{I}$ of $D_{I}$ carefully such that 
$d\widetilde{\gamma_{I}}=0$ in $U_{I}$. See \cite{Clemens:1977} for details.
In this paper, we always work with such an extension.\\

In general, the map  $\mathrm{res}_{j}$ is not injective, with
$
W_{j-1}A_{X}^{p,q}(\log D)\subseteq 	\ker\,\mathrm{res}_{j}=
K_{j}^{p,q}
$ according to \eqref{eqnkerresiduesheaf}.	Typical elements in the kernel include those of the form $\alpha_{j}- R_{j}\alpha_{j}$, where  $\alpha_{j}\in  W_{j}A_{X}^{p,q}(\log D)$.

\begin{lem}\label{lemdRjalphainWj-1}	
	 Consider case  $(\textbf{NCD})$.
Suppose $\alpha_{j}\in W_{j}A_{X}^{k}(\log D)\cap \ker\,d$. 
\begin{enumerate}[i).]
	\item One has  $dR_{j}\alpha_{j}\in W_{j-1}A_{X}^{k}(\log D)$.
	\item One has $\alpha_{j}-R_{j}\alpha_{j}\in dW_{j}A_{X}^{k-1}(\log D)+W_{j-1}A_{X}^{k}(\log D)$. In particular, $dR_{j}\alpha_{j}\in d W_{j-1}A_{X}^{k}(\log D)$.
\end{enumerate}
\end{lem}
\begin{proof}
	\begin{enumerate}[i).]
		\item 
		Since the residue operator commutes with $d$, 
one has 
$d\mathrm{res}_{I}\alpha_{j}=\mathrm{res}_{I}d\alpha_{j}=0$ for any $I$ with $|I|=j$. 
By the construction of the extension above, one sees that 
$d\widetilde{\mathrm{res}_{I}\alpha_{j}}=0$ near $D_{I}$.
Since $d\eta_{a}=\bar{\partial}\eta_{a}$ is smooth for each $a\in I$, one has 
$d\eta_{I}\in W_{j-1}A_{X}^{n}(\log D)$.
It follows that 
\begin{equation}\label{eqndRjalphaj}
	dR_{j}\alpha_{j}
	=\sum_{I:\,|I|=j}d\eta_{I}\wedge \widetilde{\mathrm{res}_{I}\alpha_{j}}
	+(-1)^{j}
	\sum_{I:\,|I|=j}\eta_{I}\wedge d\widetilde{\mathrm{res}_{I}\alpha_{j}}
	\in W_{j-1}A_{X}^{n}(\log D)\,.
\end{equation} 
\item 
Consider again the spectral sequence associated to the filtered complex $(A_{X}^{\bullet}(\log D),W_{\bullet})$.
By (i) and the assumption $d\alpha_{j}=0$, one has $d(\alpha_{j}-R_{j}\alpha_{j})\in W_{j-1}A_{X}^{k}(\log D)$. This tells that $p_{j}(\alpha_{j}-R_{j}\alpha_{j})\in E_{0}^{-j,k+j}$ 
lies in $\ker\,d_{0}$ and thus 
gives an element $[p_{j}(\alpha_{j}-R_{j}\alpha_{j})]_{E_{1}}\in E_{1}^{-j,k+j}$.
By construction, $\mathrm{res}_{j}p_{j}(\alpha_{j}-R_{j}\alpha_{j})=\mathrm{res}_{j}(\alpha_{j}-R_{j}\alpha_{j})=0$. Applying the isomorphism \eqref{eqnE1pageiso} one has $[p_{j}(\alpha_{j}-R_{j}\alpha_{j})]_{E_{1}}=0$.
That is, 
\begin{equation}\label{eqnalphaj-Rjalphajinimd0}
p_{j}(\alpha_{j}-R_{j}\alpha_{j})\in \im\,d_{0}\,.
\end{equation} 
 Using the description of $\im\,d_{r}$ recalled in \eqref{eqnd0d1closednessmoredirect}, this  gives the desired claim.
\end{enumerate}
\end{proof}

We are mostly concerned with the following subspace of $K_{j}^{p,q}$ whose elements arise from lifts.
\begin{dfn}\label{dfnWj0space}
	Consider case  $(\textbf{NCD})$.
	Define
	\begin{equation}\label{eqnWj0space}
		W_{j}^{\circ}A_{X}^{p,q}(\log D)=
		\{
		\alpha=\sum_{I:\,|I|=j}\eta_{I}\wedge \alpha_{I}\mid 	\alpha_{I}\in \Gamma(X,\overline{\mathfrak{m}}_{I}^{p-j,q})\,,~
		\alpha_{I}\notin
		\Gamma(X,\mathfrak{m}_{I}^{p-j,q})\setminus\{0\}\}\,.
	\end{equation}
	Set also $	W_{j}^{\circ}A_{X}^{k}(\log D)=\bigoplus_{p,q:\,p+q=k}	W_{j}^{\circ}A_{X}^{p,q}(\log D)$.
\end{dfn}

\begin{ex}\label{exkernelofresiduemaps}
	One example of the strict inclusion $W_{j-1}A_{X}^{p,q}(\log D)\subsetneq 		K_{j}^{p,q}$ is (invoke Example \ref{exwpn=1definitioncomputation})
	\[
	\varepsilon=
	(\widehat{Z}^{2}(z)-\wp(z))dz\in	W_{1}^{\circ}A_{E}^{1,0}(\log 0)-W_{0}A_{E}^{1,0}(\log 0)
	\,,\]
	which locally behaves as ${\bar{z}\over z}dz$ near the singularity $z=0$.
	For a general Riemann surface $C$, Laurent coefficients of the Szeg\"o kernel naturally produce elements in $W_{1}^{\circ}A_{C}^{1,0}(\log D)$. See \cite[Section 2.2]{
		Zhou:GW} for related discussions.
	\xxqed
\end{ex}

Case $\ref{caseC}$
is particularly nice for the purpose of constructing the forms \eqref{eqnangularform}
in Definition \ref{dfnangularform} and lifts of residues in Definition	\ref{dfnliftalongresiduemap}.
Indeed, using the product structure $X=C^{n}$, 
to construct the form $\eta_{a}$	
for a component $D_{a}=\Delta_{ij}\subseteq C^{n}$ it
suffices to consider the $n=2$ case and then pull back along the projection $X\rightarrow  C^{2}$.
Now for the $n=2$ case, a section $\sigma$ of $\mathcal{O}_{C\times C}([\Delta])$ can be explicitly constructed by the prime form on the Riemann surface $C$.
For example, for the genus one case it is given by the Jacobi theta function $\theta$, with a particular Hermitian  metric given by
\begin{equation} \label{eqnthetahat}
	|\theta|^2=\theta\bar{\theta} \exp(-2\pi {(\mathrm {im}~z)^2\over \mathrm{im}~\tau})\,.
\end{equation}
One then has $\eta=\partial \log |\theta|^{2}=\widehat{Z}dz$ as in Example \ref{exwpn=1definitioncomputation}.

The product structure $X=C^{n}$ can also be used to construct a coherent system of projections $X\rightarrow D_{I}$ that naturally split the inclusions $D_{I}\rightarrow X$. Then the desired extension $\widetilde{\gamma_{I}}$ in \eqref{eqnlifting} can be constructed explicitly
as the pull back of $\gamma_{I}$ along the projection, \emph{avoiding the use of partition of unity}.
This particular extension satisfies the following additional nice properties.

\begin{lem}\label{lemextensionincaseR}

Consider case $\ref{caseC}$, where the extension  $\widetilde{\gamma_{I}}$ 
is
 constructed
as the pull back of $\gamma_{I}$ along the projection $X\rightarrow D_{I}$.
\begin{enumerate}[i).]
	\item 
If $\gamma=\{\gamma_{I}\}$ is closed, then the resulting extension $\widetilde{\gamma}$ is $d$-closed  and respects the Hodge type of $\gamma$  on $X$.
\item 
If $\alpha\in W_{j}A_{X}^{k}(\log D)$ is $d$-closed, then one has $d\widetilde{\mathrm{res}_{I}(\alpha)}=0$, for any $I$ with $|I|=j$. 

\end{enumerate}
\end{lem}
\begin{proof}
	\begin{enumerate}[i).]
		\item This follows from the construction of 
		$\widetilde{\gamma_{I}}
		$ as the pull back along the projection $X\rightarrow D_{I}$.
		\item 
		One has $d\mathrm{res}_{I}\alpha=\mathrm{res}_{I}d\alpha=0$.
		This gives 
		$
		d\widetilde{\mathrm{res}_{I}\alpha}=\widetilde{(d\mathrm{res}_{I}\alpha)}
		=\widetilde{(\mathrm{res}_{I}d\alpha)}=0
		$.
	\end{enumerate}
\end{proof}

\subsubsection{Construction of $E_{0}$ lift in  case $\ref{caseC}$}

Let us get back to the discussion on constructing lifts of residues on form level, mentioned at the beginning of Section \ref{secsplittingoffactorizableforms}.
 Given $\alpha_{j}\in W_{j}A_{X}^{n}(\log D)\cap \ker \,d$,
one wants to construct a lift $s_{j}\mathrm{res}_{j}\alpha_{j}\in W_{j}A_{X}^{n}(\log D)\cap \ker \,d$ desired  in
\eqref{issuea} and \eqref{issueb}.
To take advantage of the 
explicit presentation of $W_{j}A_{X}^{n}(\log D)$ given by Lemma \ref{lemcharacterizationofkerresj}, a natural attempt is to take
$
s_{j}\mathrm{res}_{j}\alpha_{j}=R_{j}\alpha_{j}
$.
Lemma \ref{lemdRjalphainWj-1} (ii) tells that up to a $d$-exact term,  the form $\alpha_{j}-R_{j}\alpha_{j}$ lies in $W_{j-1}A_{X}^{k}(\log D)$.
However, a computation following \eqref{eqndRjalphaj} tells that  it is not $d$-closed in general.

We now propose the following construction for the $E_{0}$ lift of $[\phi]_{\mathrm{dR}}$.

\begin{cons}\label{consconstructionofE-1lift}
	Consider case  $(\textbf{NCD})$.
One takes an arbitrary representative $\Phi\in W_{n}A_{X}^{n}(\log D)\cap \ker\, d$
of $[\phi]_{\mathrm{dR}}\in H^{n}(U)$.
Starting from $\Phi_{n}:=\Phi$,  one constructs successively a sequence of forms $\{\Phi_{j}\}_{0\leq j\leq n}$ via 
	\begin{equation}\label{eqncorrectionterm}
\Phi_{j-1}:=\Phi_{j}-R_{j}(\Phi_{j})-\varepsilon_{j}-d\gamma_{j}\,,
	\end{equation}
satisfying (recall Definition \ref{dfnWj0space})
\begin{enumerate}
	\item [a).]
$
		\varepsilon_{j}\in \bigoplus_{1\leq k\leq j}	W_{k}^{\circ}A_{X}^{n}(\log D)\,,~
		\gamma_{j}\in  W_{j}A_{X}^{n}(\log D)\,,~
		\Phi_{j-1}\in  W_{j-1}A_{X}^{n}(\log D)
$.
\item [b).] $d(R_{j}(\Phi_{j})+\varepsilon_{j})=0$.
\end{enumerate}

\end{cons}
Set 
\begin{equation}\label{eqndclosedpureresidue}
	\delta_{j}=R_{j}(\Phi_{j})+\varepsilon_{j}\,,~ j\geq 1\,,\quad \quad \delta_{0}:=\Phi_{0}\,.
\end{equation}
Set also $\Phi_{+}=\sum_{j\geq 1}\delta_{j}$.
Construction \ref{consconstructionofE-1lift} then gives
\begin{equation}\label{eqndecomposePhiintolifts}
\Phi=\sum_{j= 1}^{n}d\gamma_{j}+\sum_{j=0}^{n}\delta_{j}=\sum_{j= 1}^{n}d\gamma_{j}+\Phi_{0}+\Phi_{+}\,.
\end{equation}

At this stage, we are able to achieve Construction \ref{consconstructionofE-1lift}  only 
for case $\ref{caseC}$, based on a case-by-case analysis.
Besides the nice properties of case $\ref{caseC}$ given in Lemma \ref{lemextensionincaseR}, what plays a crucial role in the course is the supply of the
rich theory of Riemann surfaces which allows to construct differential forms explicitly (cf. Example \ref{exkernelofresiduemaps}). 
Together with the product structure $X=C^{n}$, they make it possible to construct
the desired decomposition \eqref{eqncorrectionterm} in Construction \ref{consconstructionofE-1lift}. 
These aspects make  $\ref{caseC}\cap (\textbf{NCD})$ distinguished from the more general case $ (\textbf{NCD})$.
See Section \ref{secexamples} for a collection of examples. 
We hope to get back to Construction \ref{consconstructionofE-1lift}  for case $(\textbf{NCD})$ in greater generality in a future investigation, borrowing some techniques from \cite{Zucker:79}.

\begin{ex}\label{exdemonstratingexample}
	Consider $X=E\times E\times E$, with $E=\mathbb{C}/(\mathbb{Z}\oplus \mathbb{Z}\tau)$ as in Example \ref{exwpn=1definitioncomputation}.
	For $1\leq i\neq j\leq 3$,
	let $s_{ij}=z_{i}-z_{j}+c_{ij}$, where $c_{ij}$ are mutually distinct numbers modulo $\mathbb{Z}\oplus \mathbb{Z}\tau$.
	Set	
	\[
	\Phi=\left(\widehat{Z}(s_{12})-\widehat{Z}(s_{13})-\widehat{Z}(s_{32})\right)\left(\widehat{Z}(s_{21})-\widehat{Z}(s_{23})-\widehat{Z}(s_{31})\right)dz_{1}\wedge dz_{2}\wedge dz_{3}\,,
	\]
	where $\widehat{Z}(z)$ is given as in \eqref{eqndfnofZhat}. 
	Let $D=\sum_{i\neq j}(s_{ij}=0)$, then $\Phi\in W_{2}A_{X}^{3}(\log D)$.
		Using \eqref{eqnderivativesofZhat}, 
	it is direct to check that $d\Phi=0$.
	Denote 
	\begin{eqnarray*}
		\Psi&=&\left(\widehat{Z}(s_{12})\widehat{Z}(s_{21})+\widehat{Z}(s_{23})\widehat{Z}(s_{32})+
		\widehat{Z}(s_{31})\widehat{Z}(s_{13})\right)dz_{1}\wedge dz_{2}\wedge dz_{3}\,,\\
		r_{2}&=&(-\widehat{Z}(s_{12})\widehat{Z}(s_{23})-\widehat{Z}(s_{12})\widehat{Z}(s_{31})
		-\widehat{Z}(s_{13})\widehat{Z}(s_{21})	+\widehat{Z}(s_{13})\widehat{Z}(s_{23})
		-\widehat{Z}(s_{32})\widehat{Z}(s_{21})\\
		&&	+\widehat{Z}(s_{32})\widehat{Z}(s_{31}))dz_{1}\wedge dz_{2}\wedge dz_{3}
		\,,\\
		\varepsilon_{2}	&=&
		-{1\over 2}\sum_{i\neq j}(\widehat{Z}^{2}(s_{ij})-\wp(s_{ij}))dz_{1}\wedge dz_{2}\wedge dz_{3}\,,\\
		r_{1}&=&\sum_{i<j}(\widehat{Z}(s_{ij})+\widehat{Z}(s_{ji}))\widehat{Z}(s_{ij}+s_{ji})dz_{1}\wedge dz_{2}\wedge dz_{3}	\,,	\\
		\delta_{0}	&=&
		-{1\over 2}\sum_{i< j}(\widehat{Z}^{2}(s_{ij}+s_{ji})-\wp(s_{ij}+s_{ji}))dz_{1}\wedge dz_{2}\wedge dz_{3}\,.
	\end{eqnarray*}
One then has $\Phi=\Psi+r_{2}$.
	A direct computation using the addition formula of Weierstrass elliptic functions also gives the relation
	$
	\Psi=\varepsilon_{2}+r_{1}+\delta_{0}
	$.		Set now 
	\[
	\delta_{2}=r_{2}+\varepsilon_{2}\,,\quad 
	\delta_{1}=r_{1}\,.
	\]
	Then using 
	\eqref{eqnderivativesofZhat} and the fact  $\varepsilon_{2}\in W^{\circ}_{1}A_{X}^{3}(\log D)$, one has 
	\[
	d\delta_{2}=d\delta_{1}=0\,,\quad 	
	\Phi-\delta_{2}\in W_{1}A_{X}^{3}(\log D)\cap\ker\,d\,,\quad 
	\Phi-\delta_{2}-\delta_{1}\in W_{0}A_{X}^{3}(\log D)\cap\ker\,d\,.
	\]
	This leads to an $E_{0}$ lift of $[\Phi]_{\mathrm{dR}}$ given by 
	$
	\Phi=\delta_{2}+\delta_{1}+\delta_{0}$.
	
	\xxqed
\end{ex}

\begin{rem}\label{remresidueonmeromorphicforms}
	Given a smooth hypersurface $D$ in a smooth compact complex manifold $X$,
	for $\phi\in A^{n,0}_{X}(\star D)$ one has $\mathrm{res}_{\partial}(\phi)=\mathrm{res}_{L}(\phi)$ by \eqref{eqnholresidue}.
	In Deligne's spectral sequence for normal crossing divisors and the Orlik-Solomon spectral sequence for 
	hypersurface arrangement, the residue operations $\mathrm{res}_{I}$
	are iterated residues along smooth hypersurfaces.
	See \cite{Aizenberg:1994} for detailed discussion on iterated residues.
	Therefore, for
	$\Phi=\phi+d \gamma\in A^{n}_{X}(\log D)$,  one has
	\[
	\mathrm{res}_{I}(\Phi)=\mathrm{res}_{I}(\phi+d \gamma)=\mathrm{res}_{I}(\phi)
	+d\mathrm{res}_{I}(\gamma)=\mathrm{res}_{\partial}(\phi)
	+d\mathrm{res}_{I}(\gamma)
	\,.
	\] 
	Therefore, the calculation on the residue forms in Construction \ref{consconstructionofE-1lift} can be directly performed on $\phi$ instead of the logarithmic form $\Phi$ using 	the iterated version of $\mathrm{res}_{\partial}$, up to addition by $d$-exact terms.
	These extra terms do not affect the cohomology classes of the $\delta_{j}$'s in Construction \ref{consconstructionofE-1lift}.
\end{rem}

\begin{rem}
	
	Case $\ref{caseC}$ falls into the case  $\ref{caseHA}$.
	As mentioned earlier,  
	similar considerations for the 
	$(\textbf{NCD})$ case apply to  case  $\ref{caseHA}$: one needs to 
	replace the Deligne spectral sequence by the Orlik-Solomon spectral sequence (see \cite{Dupont:2015} for details).
	However, for the case where $D$ is not simple normal crossing,
	Construction \ref{consconstructionofE-1lift}
	gets  more subtle.
	Firstly, the subtraction of iterated residues in Construction \ref{consconstructionofE-1lift}
	needs to be done according to the stratum $D_{I}$ instead of   the ordered tuple $I$. Secondly, in general an iterated residue of a differential in $A^{\bullet}_{X}(\log D )$ does not lie in $A^{\bullet}_{X}(\log D )$ but only in $A^{\bullet}_{X}(\star D )$. For example, one has
	\[
	\mathrm{res}_{(z=0)}{dz\wedge dw\over zw(z-w)}=-{dw\over w^2}\,.
	\]
\end{rem}

In case $\ref{caseC}$ 
the $E_{0}$ lift in Construction \ref{consconstructionofE-1lift}
produces a  smooth form $\Phi_{0}$ which allows to compute the regularized integral via ordinary integral according to Corollary	\ref{cordelta0integral}.
Such a form  is not unique. 
What we are using here relies on a choice of section $\sigma_{a}$ and  a Hermitian metric $|\cdot|$ as in \eqref{eqnangularform}.
Difference choices lead to different forms $\Phi_{0}$ but the same underlying cohomology classes on the $E_{2}$ page, by construction.
We now  analyze the (in)dependences on these choices from another perspective, which also provides a  vanishing result that is similar to Corollary	\ref{cordelta0integral} (ii) obtained from type reasons:
\begin{equation}\label{eqnvanishingofhigherweightforms}
	\dashint_{X}R_{I}(\Phi)\wedge \psi=	\int_{X}R_{I}(\Phi)\wedge \psi=0\,,\quad \text{if}\,\quad |I|\geq 1\,.
\end{equation}
Here $\dashint_{X}$ is the regularized integral given in Definition \ref{dfnregularizedintegral} for case $\ref{caseC}$.

\begin{prop}\label{propPhiIvanishing}
	Consider case $\ref{caseC}\cap (\textbf{NCD})$.	Suppose $\Phi\in A_{X}^{n}(\log D)\cap \ker\,d$.
	\begin{enumerate}[i).]
		\item 
		The regularized integral $\dashint_{X} R_{I}(\Phi)\wedge \psi$ is independent of the choice of the sections $\sigma_a$ of $\mathcal{O}_{X}([D_{a}])$ and the Hermitian metric $|\cdot|$ used in constructing $R_{I}(\Phi)$.
		\item More strongly, one has
		$\dashint_{X}R_{I}(\Phi)\wedge \psi=0$ if $|I|\geq 1$.
	\end{enumerate}
	
\end{prop}
\begin{proof}
	\begin{enumerate}[i).]
		\item

		A different choice of the section $\sigma_a$ and the Hermitian metric $|\cdot|$ on the line bundle $\mathcal{O}_{X}([D_{a}])$ changes
		$\eta_{a}=(2\pi \mathbf{i})^{-1}\partial \log |\sigma_{a}|^2$ by a $\partial$-exact form.
		Since
		$\partial \eta_{b}=0$ by construction, 
		the quantity $\eta_{I}$ changes  by a $\partial$-exact form, say, $\partial \gamma$.
		By our construction of the extension in Definition 	\ref{dfnliftalongresiduemap} for case $\ref{caseC}\cap (\textbf{NCD})$, we have  
	$		d\widetilde{\mathrm{res}_{I}\Phi}
	=0	$
	according to Lemma \ref{lemextensionincaseR} (ii).		
		By  type reasons, it follows that $R_{I}(\Phi)\wedge \psi= \eta_{I}\wedge \widetilde{\mathrm{res}_{I}(\Phi)}\wedge \psi$ changes by
		\[
		\partial \gamma\wedge \widetilde{\mathrm{res}_{I}\Phi}\wedge \psi
		=d \gamma\wedge \widetilde{\mathrm{res}_{I}\Phi}\wedge \psi
		=	d (\gamma\wedge \widetilde{\mathrm{res}_{I}\Phi})\wedge \psi
		=	\partial (\gamma\wedge \widetilde{\mathrm{res}_{I}\Phi}\wedge \psi)\,.
		\]
		The desired claim then follows since $\dashint_{X}$ vanishes on $\partial$-exact forms.
		\item
		
		Assume for definiteness $I=(a_{1}<a_{2}<\cdots< a_{j})$.
		By Lemma \ref{lemextensionincaseR} (ii) and type reasons, one has
		\begin{eqnarray*}
			R_{I}(\Phi)\wedge \psi
			&=&\partial (\ln |\sigma_{a_{1}}|^2\wedge\eta_{I-a_{1}}\wedge \widetilde{\mathrm{res}_{I}\Phi} \wedge \psi)
			+	(-1)^{j-1}
			\ln |\sigma_{a_{1}}|^2\wedge\eta_{I\setminus \{a_{1}\}}\wedge \partial\widetilde{\mathrm{res}_{I}\Phi} \wedge \psi\\
			&=&\partial (\ln |\sigma_{a_{1}}|^2\wedge\eta_{I-a_{1}}\wedge \widetilde{\mathrm{res}_{I}\Phi} \wedge \psi)
			+	(-1)^{j-1}
			\ln |\sigma_{a_{1}}|^2\wedge\eta_{I\setminus \{a_{1}\}}\wedge d\widetilde{\mathrm{res}_{I}\Phi} \wedge \psi  \\\
			&=&\partial (\ln |\sigma_{a_{1}}|^2\wedge\eta_{I-a_{1}}\wedge \widetilde{\mathrm{res}_{I}\Phi} \wedge \psi)\,.
		\end{eqnarray*}
		Observe that $\ln |\sigma_{a_{1}}|^2\wedge\eta_{I-a_{1}}\wedge \widetilde{\mathrm{res}_{I}\Phi} \wedge \psi$ is in fact
		integrable, as can be seen by passing to the polar coordinates.
		The desired vanishing\footnote{See \eqref{eqnvanishingpartialgamma1}	 in Proposition \ref{propcurrentsofloglogforms} for a more general statement.} then follows
		from the same argument proving 
		\eqref{eqnvanishingonpartialexacttermforhypersurfacearrangement} in
		Lemma \ref{lemindependenceondecompositionaseX}.
	\end{enumerate}
\end{proof}

	\subsection{Trace map on Dolbeault-Cech cohomology}
	
	When restricted to factorizable forms, the regularized integral admits another cohomological interpretation in terms of a trace map defined on the  Dolbeault-Cech double complex whose hypercohomology computes $H^{n}(X,\Omega^{n}_{X})$, besides the interpretation  provided in Definition \ref{dfnontracemaponcurrentcohomology}.\\
	
To illustrate the idea, we restrict ourselves to the simplest case, with $X=C$ and $D$ an effective reduced divisor.
	The discussions below apply directly to the case $(\textbf{H})$, namely 
	when $D$ is a smooth hypersurface in a smooth compact complex manifold $X$.	They should also extend to the more general situations in $\ref{caseC}$, which however require a more careful treatment of the construction of the integration chains in our discussions.

	Take $U=C-D$ and $V$ to be a small analytic neighborhood of $D$.
		Consider the corresponding Dolbeault-Cech resolution of $\Omega_{C}^{1}$.
	This gives a resolution by fine sheaves.
	 The cohomology $H^{\bullet}(X,\Omega^{1}_{C})$ is then computed by the cohomology of 
	 the total complex $K^{\bullet}$ of smooth forms,
	 with the total differential  $d=\bar{\partial}+(-1)^{p}\delta$.
		The $E_{0}$ page of the corresponding double complex $K^{\bullet,\bullet}$ is displayed in 
	Figure 	\ref{figure:figDolCechdoublecomplex} below.	
	
	\begin{figure}[h]
		\centering
		\[
		\xymatrix{
				& \check{C}^{1}(\Omega_{C}^{1})\ar@{.>}[r]^{\hookrightarrow} & A^{1,0}_{U\cap V}\ar[rr]^{\bar{\partial}}&& A^{1,1}_{U\cap V}&&\\
			&&  && &&\\
			&& A^{1,0}_{U}\oplus A^{1,0}_{V}\ar[rr]^{\bar{\partial}}\ar[uu]^{\delta}&& A^{1,1}_{U}\oplus A^{1,1}_{V}\ar[uu]^{\delta} &&\\
			&&  && A^{1,1}_{X}\ar@{.>}[u]_{\hookrightarrow }&&
		}
		\]	
		\caption{Double complex computing $H^{1}(C,\Omega^{1}_{C})$.}
		\label{figure:figDolCechdoublecomplex}
	\end{figure}

	An element in $H^{1}(K^{\bullet})$ is represented by a triple 
	\[
		(\omega_{U},\omega_{V},\eta_{UV})\in C^{1}(K^{\bullet})=A^{1,0}_{U}\oplus A^{1,0}_{V}\oplus A^{1,0}_{U\cap V}
	\]
	such that
	\[
	d		(\omega_{U},\omega_{V},\eta_{UV})=\omega_{U}-\omega_{V}+\bar{\partial}\eta_{UV}=0\,.
	\]

One has the following trace map on $C^{1}(K^{\bullet})$  \cite{Polishchuk:2005} that enjoys nice properties	
\begin{eqnarray}\label{eqnImap}
	\mathrm{Tr}^{\bar{\partial}}: C^{1}(K^{\bullet})&\rightarrow& \mathbb{C}\,,\nonumber\\
	(\omega_{U},\omega_{V},\eta_{UV})
	&\mapsto & \int_{C-B_{\varepsilon}(D)}\omega_{U}+
	\int_{B_{\varepsilon}(D)}\omega_{V}-\int_{\partial B_{\varepsilon}(D)}\eta_{UV}\,.
\end{eqnarray}
Here $B_{\varepsilon}(D)$ is a disk bundle of radius $\varepsilon$ over $D$ such that $\partial B_{\varepsilon}(D)\subseteq U\cap V$, with respect to some Hermitian metric on $C$.
By Stokes theorem, the result is independent of the choice of $\varepsilon$ and the  metric.
Furthermore, it vanishes on $d$-exact terms.
Therefore, it descends to a map on the cohomology
$H^{1}(K^{\bullet})\cong H^{1}(C,\Omega_{C}^{1})$.
It specializes to the trace map $\mathrm{Tr}$ on cocycles of the form $(\omega_{U},\omega_{V},0)$
and the residue map $\mathrm{res}_{\partial}$ on cocycles of the form $(0,0,\eta_{UV})$.

Now consider a factorizable form $\omega=\phi\wedge\psi$.
Lemma \ref{lemregularizedintegralforfactorizableformsincaseHA} and 
Proposition \ref{lemprojectiontoIn0byMHS}	tell that
\begin{equation}\label{eqnvanishingofphi+}
\mathfrak{s}
\left(
([\phi]_{\mathrm{dR}}- \pi_{I^{n,0}}([\phi]_{\mathrm{dR}}))\wedge \psi\right)=0\,.
\end{equation}
The cohomology class $[\phi]_{\mathrm{dR}}- \pi_{I^{n,0}}([\phi]_{\mathrm{dR}})\in H^{n}(U)$
is the same as the class $[\Phi]_{\mathrm{dR}}- \pi_{I^{n,0}}([\Phi]_{\mathrm{dR}})\in H^{n}(A_{X}^{\bullet}(\log D),d)$ for any form $\Phi\in A_{X}^{n}(\log D)\cap \mathrm{ker}\,d$ representing $[\phi]_{\mathrm{dR}}$.
It
can be lifted from the $E_{2}$ page to the $E_{1},E_{0}$ pages.
The vanishing relation \eqref{eqnvanishingofphi+} above always hold by type reasons as in Proposition \ref{lemprojectiontoIn0byMHS}.
Take
any $d$-closed form $\Phi_{+}\in A_{X}^{n}(\log D)$ representing this class.
By Remark
\ref{remresidueonmeromorphicforms}, locally near $D$ one has 
\[
\Phi_{+}= \eta\wedge \widetilde{\mathrm{res}_{\partial}(\phi)}+\text{form in}~\Gamma(X, W_{0}\mathcal{A}^{n}_{X}(\log D ))\,.
\]

When restricted on a connected component of the Stein open $V$, the form $\psi$ is $\bar{\partial}$-exact.
For each such component, we choose a particular primitive $\bar{\partial}^{-1}\psi$ that satisfies (recall that $\partial \psi=0$)
\begin{equation}\label{eqnrequirementonpartialbarinversepsi}
	\partial \bar{\partial}^{-1}\psi=0\,,\quad ( \bar{\partial}^{-1}\psi)|_{D}=0\,.
\end{equation}
The trivial current cohomology class $[\Phi_{+}\wedge \psi]\in 
H^{n}_{\partial}(C,\mathcal{D}_{C}^{'\bullet,n})	 \cong H^{n}_{\partial}(C,\overline{\Omega}_{C}^{n})\cong H^{n}_{\bar{\partial}}(C,\Omega_{C}^{n})$
is then represented as the following particular coboundary  in  the Dolbeault-Cech complex
	\begin{equation}\label{eqnliftoflogpart}
		\varpi_{+}:=(\Phi_{+}\wedge \psi,\bar{\partial}\Phi_{+}\wedge \bar{\partial}^{-1}\psi,\Phi_{+}\wedge\bar{\partial}^{-1}\psi)\,.	
		\end{equation}
	This is a collection of smooth forms supported on different Cech opens.
		The resulting cocycle for $\omega=\phi\wedge \psi$ 
in $C^{1}(K^{\bullet})$ is then 	
\begin{equation}\label{eqnconstructionofDolCechcyclen=1}
	\varpi(\phi,\psi)
	:=(\Phi_0\wedge \psi,\Phi_0\wedge \psi,0 )
	+\varpi_{+}\in C^{1}(K^{\bullet})\,.
\end{equation}
	Note that $\Phi_0\wedge \psi\in  A^{n,0}_{X}(\overline{\Omega}_{X}^{n})$ gives a Dolbeault form $\omega_{\mathrm{Dol}}$ associated to $T_{\Phi\wedge \psi}$ via  Theorem \ref{thmresidueformulaforcurrents}.
	
Taking the limit $\varepsilon\rightarrow 0$ in \eqref{eqnImap},
	it follows from the $\varepsilon$-independence of $\mathrm{Tr}^{\bar{\partial}}$ in \eqref{eqnImap}, \eqref{eqnholomorphicresidue}, Theorem \ref{thmformdefiningcurrent}, and Corollary \ref{cordelta0integral} that
\[
	\mathrm{Tr}^{\bar{\partial}}(\varpi_{+})
=\dashint_{X} \Phi_{+}\wedge \psi- 2\pi \mathbf{i}\,\mathrm{res}_{\partial}(\Phi_{+}\wedge \bar{\partial}^{-1}\psi)
	=- 2\pi \mathbf{i}\,\mathrm{res}_{\partial}(\Phi_{+}\wedge \bar{\partial}^{-1}\psi)\,.
\]
A local calculation using  \eqref{eqnrequirementonpartialbarinversepsi}
tells that the above result is zero.
Using \eqref{eqnconstructionofDolCechcyclen=1} and Corollary \ref{cordelta0integral}, one obtains
	\begin{equation}\label{eqnconstructionoftracen=1}
	\mathrm{Tr}^{\bar{\partial}}(\varpi(\phi,\psi))=
	\mathrm{Tr}^{\bar{\partial}}((\Phi_0\wedge \psi,\Phi_0\wedge \psi,0 ))+\mathrm{Tr}^{\bar{\partial}}(\varpi_{+})
	=\int_{X}\Phi_0\wedge \psi+0=\dashint_{X}\phi\wedge \psi\,.
	\end{equation}

	\begin{ex}\label{exn=1splittingexamplewpwz}
		This is a continuation of Example \ref{exwpn=1definitioncomputation} and 
		Example \ref{exwpn=1residuecomputation}.
		Consider the case 
		$\omega=\phi\wedge \psi$, with $\phi=\wp(z)dz,\psi={d\bar{z}\over \bar{\tau}-\tau}$. Using \eqref{eqnderivativesofZhat},
			we can take
		\[
		\Phi=\phi+d\widehat{Z}=-\widehat{\eta}_{1}dz+\pii \psi\,,\quad 
		\Phi_{0}=-\widehat{\eta}_{1}dz+\pii \psi\,,\quad
		\Phi_{+}=0
		\,.\]
		The corresponding components in $I^{1,0}=F^{1}\widetilde{W}_{1},I^{0,1}=\overline{F^{1}}\cap \widetilde{W}_{1},
		I^{1,1}=F^{1}\cap \overline{F^{1}}$
		are
		\[
		\pi_{I^{1,0}}([\Phi_{0}]_{\mathrm{dR}})=[-\widehat{\eta}_1 dz] \,,\quad 
				\pi_{I^{0,1}}([\Phi_{0}]_{\mathrm{dR}})=[\pii\psi]\,,\quad
									\pi_{I^{1,1}}([\Phi_{+}]_{\mathrm{dR}})=0
				\,.
		\]
		The cocycle $\varpi(\phi,\psi)$ in \eqref{eqnconstructionofDolCechcyclen=1}
		is then given by
			\[
		\varpi(\phi,\psi)=(-\widehat{\eta}_{1}dz\wedge \psi,-\widehat{\eta}_{1}dz\wedge \psi,0)+0\,.
		\]
		Thus
		\[
			\mathrm{Tr}^{\bar{\partial}}(\varpi(\phi,\psi))
		=-\int_{E}\widehat{\eta}_{1}\mathrm{vol}+0=-\widehat{\eta}_{1}\,.
		\]		

				Consider also $\phi=\wp(z-z_1)\wp(z-z_2)dz$ with $z_{1}-z_{2}\notin \mathbb{Z}\oplus \mathbb{Z}\tau$ and the same $\psi$ as above.
		By Remark \ref{remresidueonmeromorphicforms},  in this case (invoking \eqref{eqndfnofZhat}, \eqref{eqnangularform}, \eqref{eqnthetahat})  one takes
				\[	
		\Phi_{+}=\wp'(z_1-z_2)\widehat{Z}(z-z_1)dz+\wp'(z_2-z_1)\widehat{Z}(z-z_2)dz\,.
	\]
	See \eqref{eqnPhidecompositionforexp1p2} in Example \ref{exresiduep1p2} below for an expression for $\Phi_{0}$.
	Note that indeed one has $[\Phi_{+}]_{\mathrm{dR}}\in I^{1,1} $ as can be proved using  \eqref{eqnthetahat}. 
	The cocycle $\varpi_{+}$ is given by, up to addition by $d$-exact terms,
		\begin{eqnarray*}
\varpi_{+}&=&(\wp'(z_1-z_2)\widehat{Z}(z-z_1)\wedge \psi,
	\wp'(z_1-z_2)
	{\bar{z}-\bar{z}_1\over \bar{\tau}-\tau}\mathrm{vol},\wp'(z_1-z_2)\widehat{Z}(z-z_1)\wedge {\bar{z}-\bar{z}_1\over \bar{\tau}-\tau})\\
	 &+&(\wp'(z_2-z_1)\widehat{Z}(z-z_2)\wedge \psi,
	 \wp'(z_2-z_1)
	 {\bar{z}-\bar{z}_2\over \bar{\tau}-\tau}\mathrm{vol},\wp'(z_2-z_1)\widehat{Z}(z-z_2)\wedge {\bar{z}-\bar{z}_2\over \bar{\tau}-\tau})\,.
	\end{eqnarray*}
	Thus
			\[
			\mathrm{Tr}^{\bar{\partial}}(\varpi(\phi,\psi))
		=\langle [E],[\Phi_{0}]_{E_{1}}\wedge \psi\rangle+\int_{z_{1}}\wp'(z_{1}-z_{2})
		\mathbf{1}_{z_{1}}+\int_{z_{2}}\wp'(z_{2}-z_{1})\mathbf{1}_{z_{2}}=\langle [E],[\Phi_{0}]_{E_{1}}\wedge \psi\rangle\,,
		\]
		where $\mathbf{1}_{z_{i}}$ stands for the constant function that is supported at $z_{i}$ and takes the value $1$.

			\xxqed
	\end{ex}

\section{Examples}
\label{secexamples}

In this part we provide several examples from case \ref{caseC}, ranging from complements of  
simple normal crossing divisors to configuration spaces.
The integrals under investigation are difficult to make sense of without 
regularizations, and 
 serve as non-trivial checks of our cohomological formulations given in
Section \ref{seccurrentcohomology} and 
	Section \ref{secfactorizableforms}.

\begin{ex}\label{exresidueppowers}
		This is a continuation of Example \ref{exwpn=1definitioncomputation}, Example \ref{exwpn=1residuecomputation}.
	Consider the case $X=E,D=0$, with $\omega=\phi\wedge \psi$.
	Let $\phi=\wp(z)^{m}dz,\psi={d\bar{z}\over \bar{\tau}-\tau}$, $m=1,2,3$.
	
	For the $m=1$ case, as in Example \ref{exn=1splittingexamplewpwz} we have from  \eqref{eqnderivativesofZhat}  that
	\begin{equation}\label{eqnwpdzdR}
	\wp dz=-\widehat{\eta}_1 dz-\partial\widehat{Z}=-\widehat{\eta}_1 dz+\pii \psi-d\widehat{Z}\,.
	\end{equation}	
	Recall the Weierstrass relations
	\begin{equation}\label{eqnWeierstrassrelations}
	(\wp')^{2}=4\wp^{3}-g_{2}\wp-g_{3}\,,\quad \wp''=6\wp^2-{1\over 2}g_{2}\,.	
	\end{equation}
This gives
\[
\wp^{2}={1\over 6}\wp''+{1\over 12}g_{2}\,,\quad 
\wp^{3}={3\over 20}g_{2}\wp+{1\over 10}g_{3}+{1\over 10}(\wp \wp')^{'}\,.
\]
It follows from  \eqref{eqnwpdzdR} that 
\[
\wp^{2}dz={1\over 12}g_{2}dz+{1\over 6}d\wp'\,,\quad 
\wp^{3}dz=-{3\over 20}g_{2}\widehat{\eta}_1 dz
+{3\over 20}\pii  g_{2}\psi-{3\over 20} g_{2}d\widehat{Z}+{1\over 10}g_{3}dz
+{1\over 10}d(\wp \wp')\,.
\]
These give
\[
\dashint_{E}\wp^{2}dz\wedge \psi={1\over 12}g_{2}\,,\quad 
\dashint_{E}\wp^{3}dz\wedge \psi=-{3\over 20}\widehat{\eta}_{1}g_{2}+
{1\over 10}g_{3}\,.
\]		 	
	\end{ex}
	
	\begin{ex}\label{exresiduep1p2}
		Consider the case $X=E,\omega=\phi\wedge \psi$, with 
		\[
		\phi=\wp(z-z_1)\wp(z-z_2)dz\,,~z_1\neq z_2~\mathrm{mod}~ \mathbb{Z}\oplus \mathbb{Z}{\tau}\,,\quad 
		\psi={d\bar{z}\over \bar{\tau}-\tau}\,.
		\]			
		
			Recall (see e.g., \cite{Silverman:2009}) the series expansion for the function  $\widehat{Z}$ given in \eqref{eqndfnofZhat}
		\begin{equation}\label{eqnwidehatZexpansion}
			\widehat{Z}(z)={1\over z}-\widehat{\eta}_1 z-\sum_{k\geq 2} {2G_{2k}}{z^{2k-1}}+{-\pi\over \mathrm{im}\,\tau}\bar{z}\,,
			\quad
			G_{2k}={1\over 2}\sum_{(m,n)\in \mathbb{Z}^2-\{(0,0)\}} {1\over (m\tau+n)^{2k}}\,.
		\end{equation}		
				 Using the iterated residue formulas in \cite{Li:2022regularized}, one has 
		 \begin{eqnarray}\label{eqnexresiduep1p2}
		 \dashint_{E}\phi\wedge \psi&=&\mathrm{res}(\wp(z-z_1)\wp(z-z_2)\widehat{Z}(z-z_2))\\
		 &=&
		 \wp'(z_1-z_2)\widehat{Z}(z_1-z_2)
		 +
		  \wp(z_1-z_2)\widehat{Z}'(z_1-z_2)
		 +
		{1\over 2}\wp''(z_1-z_2)+\wp(z_{2}-z_{1})(-\widehat{\eta}_{1})\nonumber\,.
		 \end{eqnarray}
	 
	 Let us also consider the splitting \eqref{eqndecomposePhiintolifts}  in Construction \ref{consconstructionofE-1lift}  which in particular gives a decomposition \eqref{eqnlogdecomposition}.
	 	Hereafter, we often omit the notation for the differentials $dz,d\bar{z}$ part if they are clear from the surrounding texts. We also use
	 $f(z_{ij})$ to denote the function
	 $f(z_{i}-z_{j})$.
		Using the addition formula for the Weierstrass $\wp$- and $\zeta$-function, one has
	 \begin{eqnarray*}
	 &&	\phi-\wp'(z_1-z_2)(\zeta(z-z_1)-\zeta(z-z_2))\\
	 	&=&	 -\wp(z-z_2)^2-\wp(z_{21})\wp(z-z_2)-\wp'(z_{12})\zeta(z_{21})\\
	 	&	&+{1\over 4}( {\wp'(z-z_2)-\wp'(z_{21})\over\wp(z-z_2)-\wp(z_{21}) })^2\wp(z-z_2)
	 	-{1\over 2}\wp'(z_{12}){\wp'(z-z_2)-\wp'(z_{21})\over\wp(z-z_2)-\wp(z_{21}) }\,.
	 	\end{eqnarray*}	
 	Straightforward computations using the Weierstrass relation \eqref{eqnWeierstrassrelations} and the addition formulas for $\wp,\zeta$ show that the last line above gives
 	  \begin{eqnarray*}
 	&&{1\over 4} {\wp'(z-z_2)^2-\wp'(z_{21})^2\over (\wp(z-z_2)-\wp(z_{21})) }
 	+	{1\over 4}\wp(z_{21}) {(\wp'(z-z_2)+\wp'(z_{21}))^2\over (\wp(z-z_2)-\wp(z_{21}))^2 }
 		-	{1\over 4}\wp(z_{21}) {4\wp'(z-z_2)\wp'(z_{21})\over (\wp(z-z_2)-\wp(z_{21}))^2 }\\
 		&=&
 		(\wp(z-z_2)^2+\wp(z_{21})\wp(z-z_2)+\wp(z_{21})^2-{1\over 4}g_{2})\\
 		&&+\wp(z_{21}) (\wp(z-z_2)+\wp(z_{12})+\wp(z+z_1-2z_2))+\partial_{z}
 		\left(		 {\wp(z_{21})\wp'(z_{21})\over \wp(z-z_2)-\wp(z_{21}) }
 		\right)
 		\,.
 	 	\end{eqnarray*}	
  	It follows that
  		 \begin{eqnarray*}
  		&&	\phi-\wp'(z_1-z_2)(\zeta(z-z_1)-\zeta(z-z_2))\\
  		&=&	 -\wp'(z_{12})\zeta(z_{21})+\wp(z_{21})^2-{1\over 4}g_{2}\\\
  		&	&+\wp(z_{21}) \wp(z-z_2)+\wp(z_{12})^2+\wp(z_{21})\wp(z+z_1-2z_2)+\partial_{z}
  		\left(		 {\wp(z_{21})\wp'(z_{21})\over \wp(z-z_2)-\wp(z_{21}) }
  		\right)\,.
  	\end{eqnarray*}	
Recall \eqref{eqndfnofZhat} and \eqref{eqnderivativesofZhat}, we obtain 
    \begin{eqnarray*}
  	\phi&=&-\wp'(z_{12})\widehat{Z}(z_{21})+2\wp(z_{21})^2-{1\over 4}g_{2}
  	+\wp(z_{21}) \wp(z-z_2)+\wp(z_{21})\wp(z+z_1-2z_2)\,\nonumber\\
  	&&+\wp'(z_{12})(\widehat{Z}(z-z_1)-\widehat{Z}(z-z_2))+\partial_{z}
  	\left(		 {\wp(z_{21})\wp'(z_{21})\over \wp(z-z_2)-\wp(z_{21}) }
  	\right)\\
  	&=&-\wp'(z_{12})\widehat{Z}(z_{21})+2\wp(z_{21})^2-{1\over 4}g_{2}
  	-2\widehat{\eta}_{1}\wp(z_{21})\,\nonumber\\
  	&&+\wp'(z_{12})(\widehat{Z}(z-z_1)-\widehat{Z}(z-z_2))+\partial_{z}
  	\left(		 {\wp(z_{21})\wp'(z_{21})\over \wp(z-z_2)-\wp(z_{21}) }
  	\right)\,.
  \end{eqnarray*}
  This gives a desired  splitting \eqref{eqndecomposePhiintolifts} $\phi=\Phi_{0}+\Phi_{+}-d\gamma$  in Construction \ref{consconstructionofE-1lift}, with 
  \begin{eqnarray}\label{eqnPhidecompositionforexp1p2}
  -d\gamma&=&d\left(		 {\wp(z_{21})\wp'(z_{21})\over \wp(z-z_2)-\wp(z_{21}) }
  	\right)\,,\nonumber\\
  \Phi_{0}&=&\left(-\wp'(z_{12})\widehat{Z}(z_{21})+2\wp(z_{21})^2-{1\over 4}g_{2}
 	-2\widehat{\eta}_{1}\wp(z_{21})\right)dz\,,\nonumber\\
  \Phi_{+}&=&\left(\wp'(z_1-z_2)(\widehat{Z}(z-z_1)-\widehat{Z}(z-z_2))\right)dz\,.
    \end{eqnarray}
From the Weierstrass relations \eqref{eqnWeierstrassrelations} and  \eqref{eqnderivativesofZhat}, one sees that the regularized integral of $\Phi_0\wedge \psi$ matches the result in  \eqref{eqnexresiduep1p2} above.

	\xxqed
	\end{ex}

	\begin{ex}\label{exn=3splittingNCDexample}
		Consider $X=E\times E\times E,D=\Delta_{12}+\Delta_{23}$, 
		$\omega= \phi\wedge \psi$, with
		\[
		\phi=\wp(z_1-z_2)\wp(z_2-z_3)dz_1\wedge dz_2\wedge dz_3\,,\quad
		\psi=\bigwedge_{i=1}^3{d\bar{z}_{i}\over \bar{\tau}-\tau}\,.
		\]
		
		Using \eqref{eqnwpdzdR}, the iterated regularized integral gives
		\begin{equation}\label{eqnconfigurationspaceexamplemethod1}
			\dashint_{E_3}\dashint_{E_2} \dashint_{E_{1}} \phi\wedge \psi =
		\dashint_{E_3}\dashint_{E_2} -\widehat{\eta}_1 \wp(z_2-z_3)=\widehat{\eta}_1^2\,.
				\end{equation}
			One can also perform the integration $\dashint_{E_{2}}$ first by applying \eqref{eqnPhidecompositionforexp1p2} and \eqref{eqnWeierstrassrelations}. This gives the same result as above:
					\begin{eqnarray}\label{eqnconfigurationspaceexamplemethod2}
			&&	\dashint_{E_3}\dashint_{E_2} \dashint_{E_{1}} \phi\wedge \psi \nonumber\\&=&
				\dashint_{E_3}\dashint_{E_1} 
				(\wp'(z_{13})\widehat{Z}(z_{13})+2\wp(z_{13})^2-{1\over 4}g_{2}-2\widehat{\eta}_1 \wp(z_{13}) ) \nonumber\\
			&	=&				
				\dashint_{E_3}\dashint_{E_1} 
			((\wp(z_{13})\widehat{Z}(z_{13}))'+\wp(z_{13})(\wp(z_{13})+\widehat{\eta}_1)+2\wp(z_{13})^2-{1\over 4}g_{2}-2\widehat{\eta}_1 \wp(z_{13}) ) \nonumber\\
							&	=&				
			\dashint_{E_3}\dashint_{E_1} 
			(3\wp(z_{13})^2-{1\over 4}g_{2}
				-\widehat{\eta}_1 \wp(z_{13})) \nonumber\\
					&	=&				
				\dashint_{E_3}\dashint_{E_1} 
				({1\over 2}\wp(z_{13})''
				-\widehat{\eta}_1 \wp(z_{13})) \nonumber\\
				&=&
				\widehat{\eta}_1^2\,.
			\end{eqnarray}
			
										For comparison, we also calculate the result using the iterated residue formulas in \cite{Li:2022regularized}. 
				Using the Laurent expansions \eqref{eqnwidehatZexpansion} of $\wp,\widehat{Z}$, the regularized integral gives
		\begin{eqnarray}&&
		\dashint_{E_3}\dashint_{E_2} \dashint_{E_{1}} \phi\wedge \psi \nonumber\\&=&	\mathrm{res}_{z_2=z_3}
		\mathrm{res}_{z_1=z_3} \wp(z_1-z_2)\wp(z_2-z_3)\widehat{Z}(z_1-z_3)\widehat{Z}(z_2-z_3)\nonumber\\
&	&+
			\mathrm{res}_{z_2=z_3}
		\mathrm{res}_{z_1=z_2}\wp(z_1-z_2)\wp(z_2-z_3)
		 \left(
		 \widehat{Z}(z_{1}-z_3)\widehat{Z}(z_2-z_3)+{1\over 2}\widehat{Z}(z_2-z_3)^2
		 \right)\nonumber\\
		 &=&
		 	\mathrm{res}_{z_2=z_3}
		\wp(z_2-z_3)^2\widehat{Z}(z_2-z_3)\nonumber\\
		 &	&+
		 \mathrm{res}_{z_2=z_3}\wp(z_2-z_3)
		 		 \widehat{Z}'(z_{2}-z_3)\widehat{Z}(z_2-z_3)\nonumber \\
	&=&	 		  -\widehat{\eta}_1\mathrm{res}_{z_2=z_3}
		 		 \wp(z_2-z_3)\widehat{Z}(z_2-z_3)\nonumber\\
		 		 &=&\widehat{\eta}_1^2\,.
		\end{eqnarray}
	
	Let us also evaluate the integral using the splitting \eqref{eqndecomposePhiintolifts} in Construction \ref{consconstructionofE-1lift} above.
Denote $\psi_{ij}={d\bar{z}_{i}-d\bar{z}_{j}\over \bar{\tau}-\tau}$.
Straightforward computations using \eqref{eqnwpdzdR}
give 
$
\phi=\Phi_{0}+\Phi_{+}-d\gamma
$
with
\begin{eqnarray}\label{eqnPhidecompositionforp12p23}
\Phi_{+}&=&0\,,\nonumber\\
\Phi_0&=&
\widehat{\eta}_{1}^{2}dz_{1}\wedge dz_{2}\wedge dz_{3}-\pii \widehat{\eta}_{1}\psi_{23}\wedge dz_{3}\wedge dz_{1}
-\pii \widehat{\eta}_{1}\psi_{12}\wedge dz_{2}\wedge dz_{3}+(\pii)^2 
\psi_{12}\wedge \psi_{23}\wedge dz_{3}\,,\nonumber\\
-d\gamma&=&-\widehat{\eta}_{1} d(\widehat{Z}(z_{23})dz_{3}\wedge dz_{1})
-d(\widehat{Z}(z_{12})\wp(z_{23})dz_{2}\wedge dz_{3})
+\pii d(\widehat{Z}(z_{23})\psi_{12}\wedge dz_{3}) \,.
\end{eqnarray}
Therefore the regularized integral of $\omega$ is given by $\dashint_{X}\Phi_0\wedge \psi=\widehat{\eta}_1^2$.	
That $\Phi_{+}=0$ is consistent with the vanishing of the cohomology classes of the residues $\mathrm{res}_{I}\phi$ which can be easily computed using Remark \ref{remresidueonmeromorphicforms}.
			\xxqed
	\end{ex}

\begin{ex}\label{exn=3splittingconfigurationspaceexample}
	Consider $X=E\times E\times E,D=\Delta_{12}+\Delta_{23}+\Delta_{31}$, $\omega=\phi\wedge \psi$,	
	with 
	\[
	\phi=\wp(z_1-z_2)\wp(z_2-z_3)\wp(z_3-z_1)dz_1\wedge dz_2\wedge dz_3
	\,,\quad
	\psi=\bigwedge_{i=1}^3{d\bar{z}_{i}\over \bar{\tau}-\tau}\,.
	\]
	
The iterated regularized integral has essentially been studied in \cite[Example 3.24, Appendix C]{Li:2020regularized}  that utilizes \eqref{eqncontacttermn=1}.
Let us also evaluate it via the iterated residue formulas in \cite{Li:2022regularized}.
	The details are as follows:
	\begin{eqnarray*}
	&&
	\dashint_{E_{3}}\dashint_{E_{2}}(R^{1}_{2}+R^{1}_{3})\wp(z_{12})\wp(z_{23})\wp(z_{31})\widehat{Z}(z_{13})\\
	&=&
	\dashint_{E_{3}}\dashint_{E_{2}}
	\wp(z_{23})\wp'(z_{23})\widehat{Z}(z_{23})+	\wp(z_{23})\wp(z_{23})\widehat{Z}'(z_{23})+\wp(z_{23})\wp(z_{32})(-\widehat{\eta}_1)+{1\over 2}\wp''(z_{23})\wp(z_{23})
	\\
		&=&
	\dashint_{E_{3}}R^{2}_{3}
	\left(
	\wp(z_{23})\wp'(z_{23}){1\over 2}\widehat{Z}^2(z_{23})+(	\wp(z_{23})^2\widehat{Z}'(z_{23})+\wp(z_{23})^2(-\widehat{\eta}_1)+{1\over 2}\wp''(z_{23})\wp(z_{23}))\widehat{Z}(z_{23})\right).
			\end{eqnarray*}		
	Hereafter we use the short-hand notation $R^{i}_{j}$ to denote $\mathrm{res}_{z_{i}=z_{j}}$.
	Observe that
	\[
	\wp(z_{23})\wp'(z_{23}){1\over 2}\widehat{Z}^2(z_{23})=
	\partial_{z_2}( {1\over 2}\wp(z_{23})^2  {1\over 2}\widehat{Z}^2(z_{23}))
	-{1\over 2}\wp(z_{23})^2 \widehat{Z}(z_{23})\widehat{Z}'(z_{23})\,,
	\]
the above reduces to (using \eqref{eqnWeierstrassrelations})
	\begin{eqnarray*}
	&&
	\dashint_{E_{3}}R^{2}_{3}
	\left(\left(
	-{1\over 2}\wp(z_{23})^2 \widehat{Z}'(z_{23})+\wp(z_{23})^2\widehat{Z}'(z_{23})+\wp(z_{23})^2(-\widehat{\eta}_1)+{1\over 2}\wp''(z_{23})\wp(z_{23})\right)\widehat{Z}(z_{23})\right)\\
	&=&
	R^{2}_{3}
	\left(\left(
		{5\over 2}\wp(z_{23})^3 -{3\over 2}\widehat{\eta}_1\wp(z_{23})^2-{1\over 4}g_{2}\wp(z_{23})\right)\widehat{Z}(z_{23})\right)\,.
\end{eqnarray*}		
	Using $[f]_{k}$ to denote the degree-$k$ coefficient of a formal Laurent series $f$,  from \eqref{eqnwidehatZexpansion}
	the residue above then gives
		\begin{eqnarray}\label{eqnnonNCDexmethod1}
			&&
	{5\over 2}[\wp^3]_{-6} [\widehat{Z}]_{5}+
	{5\over 2}[\wp^3]_{-4} [\widehat{Z}]_{3}+
	{5\over 2}[\wp^3]_{-2} [\widehat{Z}]_{1}+
	{5\over 2}[\wp^3]_{0} [\widehat{Z}]_{-1}\nonumber\\
	&+&
		-{3\over 2}\widehat{\eta}_1 [\wp^2]_{-4} [\widehat{Z}]_{3}
			-{3\over 2}\widehat{\eta}_1 [\wp^2]_{-2} [\widehat{Z}]_{1}
				-{3\over 2}\widehat{\eta}_1 [\wp^2]_{0} [\widehat{Z}]_{-1}
								\nonumber\\
								&+&  -{1\over 4}g_{2} [\wp]_{-2}[\widehat{Z}]_{1}- {1\over 4}g_{2} [\wp]_{0}[\widehat{Z}]_{-1}
									\nonumber\\
				&=&
					{5\over 2} [\widehat{Z}]_{5}
				+{5\over 2}[\wp^3]_{-2} [\widehat{Z}]_{1}
				+{5\over 2}[\wp^3]_{0} 
				-{3\over 2}\widehat{\eta}_1  [\widehat{Z}]_{3}
						-{3\over 2}\widehat{\eta}_1  [\wp^2]_{0}
							-	{1\over 4}g_{2} [\widehat{Z}]_{1}\nonumber \\
					&=&
				{5\over 2}  (-2G_{6})
				+{5\over 2} 3  (6G_{4}) (-\widehat{\eta}_1  )
				+{5\over 2}3  (10G_{6})
				-{3\over 2}\widehat{\eta}_1   (-2G_{4})
					-{3\over 2}\widehat{\eta}_1   2(6G_{4})
			+{1\over 4}g_{2}\widehat{\eta}_1 \nonumber\\
				&=&
				70G_{6}-60\widehat{\eta}_1 G_{4}
								+{1\over 4}g_{2}\widehat{\eta}_1\,.
\end{eqnarray}	
	
		Let us also evaluate the integral using the splitting \eqref{eqndecomposePhiintolifts} in Construction \ref{consconstructionofE-1lift} above.
	From the results in \eqref{eqnPhidecompositionforexp1p2}, we
	have
	\begin{eqnarray*}
	\phi&=&\wp(z_{12})
	d\left(		 {\wp(z_{21})\wp'(z_{21})\over \wp(z_{32})-\wp(z_{21}) }
  dz_{1}\wedge dz_{2}	\right)\\
	&&+\wp(z_{12})
	\wp'(z_{12})(\widehat{Z}(z_{31})-\widehat{Z}(z_{32})) dz_{3}\wedge dz_{1}\wedge dz_{2}\\
	&&+\wp(z_{12})
	\left(-\wp'(z_{12})\widehat{Z}(z_{21})+2\wp(z_{21})^2-{1\over 4}g_{2}
 	-2\widehat{\eta}_{1}\wp(z_{21})\right)dz_{3}\wedge dz_{1}\wedge dz_{2}\,.
	\end{eqnarray*}
	The first term above is $d$-exact.
	The second term is simplified into 
	\begin{eqnarray*}
	&&-\widehat{Z}(z_{31})d({1\over 2}\wp(z_{21})^2 )\wedge dz_{3}\wedge dz_{1}
	-	\widehat{Z}(z_{32})d({1\over 2}\wp(z_{12})^2 )\wedge dz_{2}\wedge dz_{3}\\
	&=&
	-d(\widehat{Z}(z_{31})\cdot {1\over 2}\wp(z_{21})^2 )\wedge dz_{3}\wedge dz_{1}
	-	d(\widehat{Z}(z_{32})\cdot {1\over 2}\wp(z_{12})^2 )\wedge dz_{2}\wedge dz_{3}\\
	&&+\bar{\partial}(\widehat{Z}(z_{31})\cdot {1\over 2}\wp(z_{21})^2 )\wedge dz_{3}\wedge dz_{1}
	+	\bar{\partial}(\widehat{Z}(z_{32})\cdot {1\over 2}\wp(z_{12})^2 )\wedge dz_{2}\wedge dz_{3}\,.
	\end{eqnarray*}	
	The last term is simplified into
	\begin{eqnarray*}
		&&d({1\over 2}\wp(z_{12})^2 \widehat{Z}(z_{12})dz_2\wedge dz_3)
		+{1\over 2}\wp(z_{12})^2 (\wp(z_{12})+\widehat{\eta}_1)
		dz_1\wedge dz_2\wedge dz_3\\
		&&
		-
		{1\over 2}\pii\wp(z_{12})^2 
	{	d\bar{z}_1-d\bar{z}_{2}\over 
		\bar{\tau}-\tau}\wedge dz_2\wedge dz_3		
		+\left(2
		\wp(z_{12})^3-{1\over 4}g_{2}\wp(z_{12})
		-2\widehat{\eta}_{1}\wp(z_{12})^2\right)	dz_1\wedge dz_2\wedge dz_3\,.
	\end{eqnarray*}
	Using the results in Example \ref{exresidueppowers}, it follows that
	\[
	\phi=
({1\over 4}g_{3}-{1\over 4}g_{2}\widehat{\eta}_{1})dz_1\wedge dz_2\wedge dz_3	+\text{forms of type}~(2,1)
	+d\text{-exact forms}\,.
	\]
	Therefore, we have
	\begin{equation}
	\dashint_{X}\phi\wedge \psi={1\over 4}g_{3}-{1\over 4}g_{2}\widehat{\eta}_{1}\,.
	\end{equation}
	By the relations $g_{2}=60\cdot 2 G_4, g_3=140\cdot 2G_{6}$  (see e.g., \cite{Silverman:2009}), we see that indeed this  
	matches \eqref{eqnnonNCDexmethod1}.

\xxqed

	\end{ex}

	\begin{appendices}

\section{Local integrability of logarithmic forms}
\label{appendixlocalestimates}

In this part we give some estimates that are used in the body of the paper.
We also discuss representatives of the current cohomology class $	[\mathrm{P.V.}_{\omega}]$ fo the current $\mathrm{P.V.}_{\omega}$ induced from \eqref{eqnHLcurrent} (cf. Proposition \ref{propformdefiningcurrentcaseD}, Corollary \ref{coruniquenessofregularization}),
in terms of integral currents arising from differential forms.

\begin{dfn}\label{dfnloglogsheaf}
	Consider  case $\ref{caseC}$ and $(\textbf{NCD})$.
	Fix a Hermitian metric  $|\cdot|^2$ for each of the line bundle $\mathcal{O}_{X}([D_{a}])$.
		For any point $p$ in $D_{I}$, take  $J\subseteq I$ such that $D_{J}=D_{I}$ 
	and $|J|=j=n-\dim\, D_{I}$.
	Pick a polydisk neighborhood with coordinate system $(s,w)=(s_{1},\cdots, s_{j}, w_{j+1},\cdots, w_{n})$ as before, where the $s_a$'s are the local defining equations of irreducible components of $D_{J}$.
	
	Let $\mathcal{A}^{\bullet,\bullet}_{X,\log}(\log D)$ be the sheaf of	
 forms generated 
	by $\prod_{a\in J}(\ln |s_{a}|^2)^{k_{a}} s_{a}^{-\ell_{a}}$  with  $k_{a}\in\mathbb{Z}_{\geq 0},\ell_{a}\in \{0,1\}, k_{a}\ell_{a}=0$,
		  for all choices of $J$, as a $\mathcal{A}^{\bullet,\bullet}_{X}$-module.
	Similarly, let $\mathcal{A}^{\bullet,\bullet}_{X,\log}$ be the sheaf of forms generated 
	by $\prod_{a\in J}(\ln |s_{a}|^2)^{k_{a}}$  with  $k_{a}\in\mathbb{Z}_{\geq 0}$,
	for all choices of $J$, as a $\mathcal{A}^{\bullet,\bullet}_{X}$-module.
\end{dfn}
Since different choices of the Hermitian metric lead to quantities that are differed by smooth functions, the sheaf above is independent of the choice of the metric.

\begin{lem}\label{lemintegrabilityoflogpart}
	Suppose $\alpha\in A^{n,n}_{X,\log}(\log D)$.
	Then $\alpha$ is locally absolutely integrable.
\end{lem}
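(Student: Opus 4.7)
The plan is to reduce to a purely local calculation by a partition of unity argument, then expand $\alpha$ in a local frame adapted to the divisor, and conclude by an explicit estimate in polar coordinates. First I would fix a partition of unity $\{\rho_i\}$ subordinate to a cover of $X$ by coordinate charts, so that absolute integrability of $\alpha$ is reduced to absolute integrability of $\rho_i \alpha$ on each chart. On charts disjoint from $D$ this is immediate since $\alpha$ is smooth there, so I only need to analyze neighborhoods of points $p \in D_I := \bigcap_{a \in I} D_a$ where $I \subseteq \{1,\dots,N\}$ records the components meeting at $p$.

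Next, near such a $p$ I would choose local holomorphic coordinates $(s_1,\dots,s_n)$ such that $s_a$ for $a \in I$ are the local defining equations for the components $D_a$ through $p$. Using the generating set in Definition~\ref{dfnloglogsheaf}, $\alpha$ can be written in this chart as a finite $C^\infty_X$-linear combination of monomials of the shape
\begin{equation*}
h(s,\bar s)\, \bigwedge_{a \in J_1}\frac{ds_a}{s_a} \wedge \bigwedge_{b \in J_2} (\ln|s_b|^2)\, ds_b \wedge \eta,
\end{equation*}
where $J_1, J_2 \subseteq I$ are disjoint, $h$ is smooth, and $\eta$ is a smooth antiholomorphic form of degree $n$. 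Because $\alpha$ has type $(n,n)$, the remaining holomorphic degree is made up so that altogether we recover $ds_1 \wedge \cdots \wedge ds_n$ in the leading frame (up to a smooth coefficient). In particular, when written in the standard frame $ds_1\wedge\cdots\wedge ds_n \wedge d\bar s_1\wedge \cdots \wedge d\bar s_n$, the coefficient of each such monomial is bounded, up to a smooth factor, by $\prod_{a \in J_1} |s_a|^{-1} \cdot \prod_{b \in J_2} |\ln |s_b|^2|$.

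Passing to polar coordinates $s_a = r_a e^{\mathbf{i}\theta_a}$ for $a \in I$ and real coordinates on the transverse directions, the standard volume form picks up a factor $r_a\, dr_a\, d\theta_a$ for each divisor coordinate. The integrand in the local integrability check is therefore bounded by a constant times
\begin{equation*}
\prod_{a \in J_1} \frac{1}{r_a} \cdot \prod_{b \in J_2} |\ln r_b| \cdot \prod_{a \in I} r_a\, dr_a\, d\theta_a \cdot (\text{smooth transverse measure}).
\end{equation*}
Each pole factor $1/r_a$ is absorbed by the $r_a$ coming from the polar volume element, and each $|\ln r_b|$ integrates against $r_b\, dr_b$ in a neighborhood of $0$ since $\int_0^\varepsilon r |\ln r|\, dr < \infty$. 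By Fubini this gives a finite integral over a small polydisk neighborhood of $p$, which establishes the desired local integrability. The step requiring the most care is the bookkeeping of the generating monomials when $J_1$ or $J_2$ is a proper subset of $I$; but since any $a \in I \setminus (J_1 \cup J_2)$ contributes no singularity at all, its factor $r_a\, dr_a\, d\theta_a$ is harmless, so the estimate goes through uniformly.
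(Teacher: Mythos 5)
Your overall strategy---partition of unity, expansion of $\alpha$ into monomials built from the generators of Definition \ref{dfnloglogsheaf}, and the polar-coordinate estimates $\int_0^\varepsilon r^{-1}\cdot r\,dr<\infty$ and $\int_0^\varepsilon r\,|\ln r|\,dr<\infty$ combined with Fubini---is exactly the one the paper uses, and those parts of your argument are sound. The genuine gap is at the coordinate-choice step: you assume that near a point $p\in D_I$ one can choose holomorphic coordinates $(s_1,\dots,s_n)$ in which \emph{every} component $D_a$, $a\in I$, is a coordinate hyperplane. This is possible only when $D$ is normal crossing at $p$, i.e.\ when the differentials $ds_a$, $a\in I$, are linearly independent at $p$ (so in particular $|I|=\operatorname{codim} D_I$). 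But the lemma is stated, and is invoked in the paper (e.g.\ in Lemma \ref{lemindependenceondecompositionaseX} and for case \ref{caseC}), for divisors that are \emph{not} normal crossing: in the configuration-space case \ref{caseC} with $n=3$, all three components $\Delta_{12},\Delta_{13},\Delta_{23}$ pass through the small diagonal, whose codimension is $2$, and $dz_{12},dz_{13},dz_{23}$ are linearly dependent, so no coordinate system of the kind you posit exists. As written, your proof covers only the $(\textbf{NCD})$ situation.

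The missing step is precisely what the paper's proof supplies: it chooses a subset $J\subseteq I$ with $D_J=D_I$ and $|J|=n-\dim D_I$, and extends only the defining equations $s_a$, $a\in J$, to a coordinate system. Alternatively, you can repair your argument monomial by monomial: in the local hyperplane-arrangement model, any monomial whose singular factors come from components with linearly dependent differentials vanishes identically (for instance $dz_{12}\wedge dz_{13}\wedge dz_{23}=0$, so $\frac{dz_{12}}{z_{12}}\wedge\frac{dz_{13}}{z_{13}}\wedge\frac{dz_{23}}{z_{23}}=0$), hence each surviving monomial involves only components whose defining equations form part of a coordinate system, which may then be completed to coordinates---with the caveat that this coordinate system varies from monomial to monomial, which is harmless because absolute integrability is checked term by term in a finite sum. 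With either repair, your polar-coordinate estimate goes through verbatim.
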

\begin{proof}
	By a partition of unity argument it suffices to prove the local integrability near a point $p\in D_{J}=\cap_{a\in J}D_{a}$. 
	Then locally $\alpha$ is a linear combination of forms
	of the type
	\[
	{\bigwedge_{a\in J}(\ln |s_{a}|^2)^{k_{a}} {ds_{a}\over s_{a}^{\ell_a}}}\wedge \psi\,,
	\]
	where $\psi$ is a smooth form, $k_{a}\in \mathbb{Z}_{\geq 0},\ell_{a}\in \{0,1\} ,k_{a}\ell_a=0$.
Passing to the polar coordinates, one sees the  local absolute integrability.
\end{proof}

\begin{lem}\label{lemeliminatinglog}
	Suppose $\alpha\in A^{n,q}_{X}(\log D),0\leq q\leq n$.
	Then there exists a
	decomposition
	\begin{equation}\label{eqnlogdecompositionintologlog}
	\alpha=\alpha_0+\partial \beta\,,\quad 
	\alpha_0\in  A^{n,q}_{X,\log}\,,~\beta\in  A^{n-1,q}_{X,\log }(\log D)\,.
	\end{equation}
	Furthermore, $\beta$ defines an integral current.
\end{lem}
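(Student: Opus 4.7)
The plan is to work locally via a partition of unity and eliminate the pole factors of $\alpha$ one at a time using the key identity
\[
\frac{ds_a}{s_a} \;=\; \partial \ln |s_a|^2 \;-\; \partial \ln h_a,
\]
where $h_a$ is the smooth positive function such that $|s_a|^2 = s_a \bar s_a h_a$ in a local trivialization of the Hermitian line bundle $\mathcal{O}_X([D_a])$.

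First, by refining a finite cover of $X$ to coordinate charts and passing to a subordinate smooth partition of unity, it suffices (by linearity of the claimed decomposition) to treat $\chi_U \alpha$ on a chart centered at a point $p \in D_I = \bigcap_{a\in I} D_a$. In local holomorphic coordinates $s_{a_1}, \ldots, s_{a_k}, w_1, \ldots, w_{n-k}$, any $\alpha \in A^{n,q}_X(\log D)$ is a finite $C^\infty$-linear combination of monomials
\[
\omega_J \;=\; f \cdot \bigwedge_{a \in J} \frac{ds_a}{s_a} \wedge \eta, \qquad J \subseteq I,
\]
with $f$ smooth and $\eta$ a smooth $(n-|J|,q)$-form in the $dw_i$'s and $d\bar z$'s.

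Second, I induct on the number $|J|$ of pole factors; the base case $|J|=0$ is trivial. For the inductive step with $|J|\ge 1$, I pick $a \in J$ and set $\Theta = \bigwedge_{b\in J\setminus\{a\}} \frac{ds_b}{s_b}$; since $\partial\Theta = 0$, the Leibniz rule combined with the key identity gives
\[
\omega_J \;=\; \partial\bigl( \ln|s_a|^2 \cdot f\,\Theta\,\eta \bigr) \;-\; \ln|s_a|^2 \cdot \partial(f\,\Theta\,\eta) \;-\; (\partial \ln h_a) \wedge f\,\Theta\,\eta.
\]
The first summand is $\partial\beta'$ with $\beta' = \ln|s_a|^2\cdot f\,\Theta\,\eta \in A^{n-1,q}_{X,\log}(\log D)$. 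The third summand has $|J|-1$ pole factors (since $\partial\ln h_a$ is smooth) and falls under the inductive hypothesis. The middle summand equals $\pm\,\ln|s_a|^2 \cdot \partial(f\eta)\wedge\Theta$ via $\partial\Theta=0$, and my plan is to apply the inductive procedure to the reduced-pole form $\partial(f\eta)\wedge\Theta$ and then carry the prefactor $\ln|s_a|^2$ through, arranging the bookkeeping so that every surviving log factor ends up attached to some $ds_a$ as a generator of $A^{n,q}_{X,\log}$.

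Third, I collect the outcome and verify the conclusions. After all iterations, $\alpha_0$ is a sum of log-prefactor monomials of the allowed type $(\ln|s_a|^2)^{k_a}ds_a$ wedged with smooth factors, hence lies in $A^{n,q}_{X,\log}$; and the accumulated $\beta$ is a sum of monomials of the form $\prod_i (\ln|s_{a_i}|^2)^{k_i}\cdot \bigwedge_b \tfrac{ds_b}{s_b}\wedge(\text{smooth})$, which by the polar-coordinate estimate underlying Lemma \ref{lemintegrabilityoflogpart} is locally $L^1$, so $\beta$ defines an integral current. The hard part will be the bookkeeping in the middle summand: one must ensure that through repeated iterations each introduced $\ln|s_a|^2$ factor is either paired with $ds_a$ in the final $\alpha_0$ or absorbed into $\partial\beta$, so that no stray ``log times non-$ds_a$'' terms survive in $\alpha_0$. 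Once arranged, patching the local decompositions via the partition of unity preserves both sheaf memberships and local $L^1$-integrability, completing the proof.
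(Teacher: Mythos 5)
Your proposal follows essentially the same route as the paper's own proof: localize near $p\in D_I$, trade $ds_a/s_a$ for $\partial\ln|s_a|^2$ modulo a smooth correction, integrate by parts repeatedly, and deduce the integrability of $\beta$ from the polar-coordinate estimate of Lemma \ref{lemintegrabilityoflogpart} (the paper verifies this by wedging $\beta$ with test $(1,0)$-forms so as to land in the $(n,n)$ case of that lemma). In fact the paper's write-up is terser than yours --- it simply says ``one applies integration by parts'' --- so the bookkeeping you flag as the hard part is left implicit there as well.

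Since you single that step out, here is the observation that closes it; it is a pure degree/type argument. Done naively, the iteration really can cycle: writing $L_a=\ln|s_a|^2$, one has
\begin{equation*}
L_a\,\partial L_b\wedge\nu \;=\; \partial\bigl(L_aL_b\,\nu\bigr)\;-\;L_b\,\partial L_a\wedge\nu\;-\;L_aL_b\,\partial\nu\,,
\end{equation*}
and the middle term merely swaps which index carries the log and which carries the pole, so the pole count by itself does not decrease; likewise, multiplying an already-decomposed $\partial$-exact piece by the prefactor $\ln|s_a|^2$ costs a term $\partial L_a\wedge\beta'$ that threatens to reintroduce the pole at $a$. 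The rescue is that every form in sight has full holomorphic degree $n$: in coordinates extending the $s_a$, each surviving monomial's smooth factor already contains $ds_c$ for every index $c$ not currently carrying a pole factor. Consequently, (i) any product of logs times a smooth $(n,q)$-form lies in $A^{n,q}_{X,\log}$ automatically, so ``stray log times non-$ds_a$'' terms simply cannot occur at top holomorphic degree; and (ii) whenever $\partial$ falls on a previously created factor $L_a$ and recreates $ds_a/s_a$, the resulting polar term is annihilated by the $ds_a$ already present, since $ds_a\wedge ds_a=0$ --- provided one keeps the invariant, automatically preserved by the iteration, that the indices carrying log prefactors remain disjoint from the indices carrying poles. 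With (i) and (ii), your induction on the number of pole factors terminates: each integration by parts yields a coboundary $\partial\beta'$ with $\beta'$ of the mixed log-pole shape covered by Lemma \ref{lemintegrabilityoflogpart} after wedging with test forms, plus monomials with strictly fewer pole factors, and the base case is handled by (i). This completes your step two exactly as planned; your steps one and three are fine as stated.
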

\begin{proof}
	Similar to the proof of Proposition \ref{prophomotopy} (i), we only need to work 
	locally near a point $p\in D_{J}$.
	Then $\alpha$ is a linear combination of differential forms of the type
	\[
	\bigwedge_{a\in J}{\partial s_{a}\over s_{a}}\wedge \psi
	=\partial \ln |s_a|^2\wedge 
	\bigwedge_{\substack{b\in J\\
			b\neq a}}{\partial s_{b}\over s_{b}}\wedge \psi\,,
	\]
	where $\psi$ is a smooth form.
	Then one applies integration by parts to obtain the desired decomposition.
	For any test form $\psi\in A^{1,0}_{X}$, one has
	$\beta\wedge \psi \in  A^{n,n}_{X,\log}(\log D)$.
	By Lemma \ref{lemintegrabilityoflogpart}, it is
	locally absolutely integrable. This finishes the proof. 
	
\end{proof}

\begin{prop}\label{propcurrentsofloglogforms}
	Let $	\mathrm{P.V.}$ be the principal value current corresponding to  $A^{\bullet,\bullet}_{X}(\star D)$ via \eqref{eqnHLcurrent}
	that extends the integral current. 	
	Then for any $\gamma\in A^{n-1,n}_{X,\log}(\log D)+A^{n-1,n}_{X}(\star D)$, one has
	$		\mathrm{P.V.}_{\partial \gamma}=\partial \,	\mathrm{P.V.}_{\gamma}$.
\end{prop}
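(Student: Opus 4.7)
The plan is to unwind the statement through the definitions, use Stokes' theorem, and reduce to a boundary-integral vanishing that is essentially the same type computation that governs \eqref{eqnvanishingofholomorphicresiduebytypereasonscaseDol0}.

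First I would work out both sides on a test function $\psi\in A^{0,0}_X$. By the definition of $\partial$ on currents and the fact that $|\gamma|=2n-1$ is odd,
\begin{equation*}
(\partial\,\mathrm{P.V.}_\gamma)(\psi)=(-1)^{|\gamma|+1}\mathrm{P.V.}_\gamma(\partial\psi)=\mathrm{P.V.}_\gamma(\partial\psi),
\end{equation*}
so the claim reduces to $\mathrm{P.V.}_\gamma(\partial\psi)=\mathrm{P.V.}_{\partial\gamma}(\psi)$. The Leibniz rule $\partial(\gamma\wedge\psi)=\partial\gamma\wedge\psi-\gamma\wedge\partial\psi$ rewrites this equivalently as
\begin{equation*}
\lim_{\varepsilon\to 0}\int_{|s|\geq\varepsilon}\partial(\gamma\wedge\psi)=0.
\end{equation*}
Here both $\mathrm{P.V.}_{\gamma}$ and $\mathrm{P.V.}_{\partial\gamma}$ make sense: $\gamma$ is locally integrable by Lemma \ref{lemintegrabilityoflogpart}, while $\partial\gamma$ lies in a sum of $A^{n,n}_X(\log D)$, $A^{n,n}_X(\star D)$, and $A^{n,n}_{X,\log}(\log D)$, all governed by the principal value limit \eqref{eqnHLcurrent}.

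Next I would use type reasons to replace $\partial$ with $d$: since $\gamma\wedge\psi$ has bidegree $(n-1,n)$, $\bar\partial(\gamma\wedge\psi)\in A^{n-1,n+1}_X=0$, hence $\partial(\gamma\wedge\psi)=d(\gamma\wedge\psi)$. Applying Stokes to the compact manifold with boundary $X\setminus B_\varepsilon(D)$ then gives
\begin{equation*}
\int_{|s|\geq\varepsilon}\partial(\gamma\wedge\psi)=-\int_{\partial B_\varepsilon(D)}\gamma\wedge\psi.
\end{equation*}

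The main step is thus to show the boundary integral on the right vanishes as $\varepsilon\to 0$. For the summand of $\gamma$ in $A^{n-1,n}_X(\star D)$, this is precisely the type-based vanishing \eqref{eqnvanishingofholomorphicresiduebytypereasonscaseDol0}: locally with $s$ a defining equation for an irreducible component of $D$, the parametrization $s=\varepsilon e^{i\theta}$ pulls $d\bar s$ back to $-i\varepsilon e^{-i\theta}d\theta$, and any meromorphic pole $s^{-k}$ then yields an angular integral $\int_0^{2\pi}e^{-i(k+1)\theta}d\theta=0$. For the summand of $\gamma$ in $A^{n-1,n}_{X,\log}(\log D)$, the same computation goes through after noting that the logarithmic factors $\ln|s_a|^2$ restrict to the constant $\ln\varepsilon_a^2$ on the tube and do not spoil the $d\theta$-oscillation.

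The main technical obstacle, if one wants to handle the (\textbf{NCD}) case uniformly rather than just the $(\textbf{H})$ case, is the bookkeeping near deeper strata $D_I$: one must construct disjointified tubular neighborhoods exactly as in the proof of Lemma \ref{lemindependenceondecompositionaseX}, control the overlap region $K_\varepsilon$ so that its contribution tends to zero by the local integrability of $\partial(\gamma\wedge\psi)$ on each stratum (again via Lemma \ref{lemintegrabilityoflogpart} applied to $\partial\gamma\wedge\psi$), and then apply the local computation above on each remaining disk bundle $B'_{\varepsilon_a}(D_a)$. Linearity and a partition of unity conclude the argument.
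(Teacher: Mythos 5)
Your reduction agrees with the paper's: evaluating both sides on a test function, checking the sign $(-1)^{|\gamma|+1}=+1$, and using the Leibniz rule to reduce the claim to the vanishing of $\mathrm{P.V.}_{\partial(\gamma\wedge\psi)}(1)$ is exactly how the paper proceeds (there written as $\mathrm{P.V.}_{\partial(\gamma f)}(1)=0$). Your treatment of the summand $\gamma\in A^{n-1,n}_{X,\log}(\log D)$ is also the paper's: for such $\gamma$ one has $\partial\gamma\wedge\psi\in A^{n,n}_{X,\log}(\log D)$, which is locally integrable by Lemma \ref{lemintegrabilityoflogpart}, so the disjointified-tube Stokes argument of Lemma \ref{lemindependenceondecompositionaseX} applies and the boundary terms vanish by the angular computation.

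The gap is in the summand $\gamma\in A^{n-1,n}_{X}(\star D)$. There $\partial(\gamma\wedge\psi)$ has poles along $D$ of one order higher than those of $\gamma$, so it is in general \emph{not} locally integrable, and Lemma \ref{lemintegrabilityoflogpart} does not apply to it --- contrary to your parenthetical claim that the overlap contribution tends to zero ``by the local integrability of $\partial(\gamma\wedge\psi)$ \ldots via Lemma \ref{lemintegrabilityoflogpart} applied to $\partial\gamma\wedge\psi$''. This is fatal exactly where you need it: in the proof of Lemma \ref{lemindependenceondecompositionaseX}, killing the term $\int_{K_\varepsilon}\partial\beta$ over the shrinking overlap region uses the hypothesis $\partial\beta\in A^{n,n}_{X}(\log D)$, i.e.\ integrability; without it, neither the $K_\varepsilon$ term nor even the existence of $\lim_{\varepsilon\to 0}\int_{|s|\geq\varepsilon}\partial(\gamma\wedge\psi)$ can be obtained by your elementary argument. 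Your local angular computation does settle the case $(\textbf{H})$ of a single smooth hypersurface, where the tube is a genuine circle bundle and no overlaps occur, but the proposition is needed for $\ref{caseC}$ and $(\textbf{NCD})$, where components of $D$ intersect. The paper handles this summand by quoting the Herrera--Lieberman vanishing \eqref{eqnresiduecurrentvanishing}, $\mathrm{P.V.}_{\partial\beta}(1)=0$ for $\beta\in A^{n-1,n}_{X}(\star D)$, which is proved via Hironaka resolution of singularities; the paper explicitly remarks, right after \eqref{eqnresiduecurrentvanishing}, that in the absence of integrability this vanishing is more involved than the argument of Lemma \ref{lemindependenceondecompositionaseX}. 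To repair your proof, replace the tube argument for the meromorphic summand by an appeal to \eqref{eqnresiduecurrentvanishing}.
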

\begin{proof}
	By linearity, it suffices to show the relation for $\gamma\in A^{n-1,n}_{X,\log}(\log D)$ and $\gamma\in A^{n-1,n}_{X}(\star D)$.
	The current $\mathrm{P.V.}_{\gamma},\gamma\in A^{n-1,n}_{X,\log}(\log D) $
	exists
	as an integral current by Lemma \ref{lemeliminatinglog}.
	For $\gamma\in A^{n-1,n}_{X}(\star D)$ and any test form $\psi\in A^{1,0}_{X}$,
	one has $\gamma\wedge \psi\in A^{n,n}_{X}(\star D)$ and thus
	$\mathrm{P.V.}_{\gamma}(\psi)$ also exists by \eqref{eqnHLcurrent}.
	It remains to show that for any $f\in A^{0,0}_{X}$, one has
	\[
	\mathrm{P.V.}_{\partial \gamma}(f)-(\partial \mathrm{P.V.}_{\gamma})(f)
	=\mathrm{P.V.}_{\partial (\gamma f)}(1)
	=0\,.
	\]	
	For  $\gamma\in A^{n-1,n}_{X}(\star D)$, that
	$\mathrm{P.V.}_{\partial (\gamma f)}(1)=0$ follows from \eqref{eqnresiduecurrentvanishing}.
	For $\mathrm{P.V.}_{ \partial (\gamma f)}(1)$ with $\gamma\in A^{n-1,n}_{X,\log}(\log D)$, using the absolute integrability, it suffices to show that 
	\begin{equation}\label{eqnvanishingpartialgamma1}	
		\int_{X}\partial (\gamma f)
		=0\,.
	\end{equation}	
This follows from 
the same arguments proving \eqref{eqnvanishingonpartialexacttermforhypersurfacearrangement} in Lemma 
\ref{lemindependenceondecompositionaseX}.
\end{proof}

For 	
case $\ref{caseC}$ and $(\textbf{NCD})$, one 
can obtain a decomposition
\[
\omega=\omega_0+\partial \gamma\,,\quad 
\omega_0\in  A^{n,n}_{X,\log}\,,~\gamma\in  A^{n-1,n}_{X,\log }(\log D)
+A^{n-1,n}_{X}(\star D)\,,
\] 
by combining  the decomposition 
\eqref{eqnlogdecompositioncasepartial} in  Proposition \ref{prophomotopy} (i)
and the one \eqref{eqnlogdecompositionintologlog} in Lemma \ref{lemeliminatinglog}.
Applying Proposition \ref{propcurrentsofloglogforms}, this gives rise to a differential form that represents the current cohomology class $[T_{\omega}]
=[\mathrm{P.V.}_{ \omega}]$.
However,  unlike Proposition \ref{propconstuctingclassusingDolCech}, this procedure is not constructive enough. Also it
does not give a satisfying representative
in terms of a \emph{smooth} form but one with mild logarithmic singularities.

\end{appendices}

\bibliographystyle{amsalpha}

\providecommand{\bysame}{\leavevmode\hbox to3em{\hrulefill}\thinspace}
\providecommand{\MR}{\relax\ifhmode\unskip\space\fi MR }
\providecommand{\MRhref}[2]{%
	\href{http://www.ams.org/mathscinet-getitem?mr=#1}{#2}
}
\providecommand{\href}[2]{#2}

\bigskip{}

\noindent{\small Yau Mathematical Sciences Center, Tsinghua University, Beijing 100084, P. R. China}


\noindent{\small Email: \tt jzhou2018@mail.tsinghua.edu.cn}

\end{document}